\newtheorem{thm}{Theorem}[section]
\newtheorem*{thm*}{Theorem*}
\newtheorem{cor}[thm]{Corollary}
\newtheorem{lem}[thm]{Lemma}
\newtheorem{prop}{Proposition}[section]
\numberwithin{equation}{section}
\theoremstyle{remark}
\newtheorem{rem}{Remark}
\theoremstyle{definition}
\newtheorem*{defn}{Definition}
\def\oz{\omega}
\def\lz{\lambda}
\def\Oz{\Omega}
\def\dz{\delta}
\def\az{\alpha}
\def\gz{\gamma}
\def\tz{\theta}
\def\({\Bigl(}
\def \){ \Bigr)}
\def\sub{\substack}
 \def\az{{\alpha}}
 \def\gz{{\gamma}}
 \def\tz{{\theta}}
 \def\lz{{\lambda}}
 \def\dz{{\delta}}
 \def\oz{{\omega}}
\def\ss{{\Bbb S}^{d}}
\begin{document}

\title{Optimal quadrature for weighted function spaces on multivariate domains
\thanks{The research was
supported by the National Natural Science Foundation of China
(Project no. 12371098).}
}
\author{Jiansong Li\thanks{E-mail address: cnuljs2023@163.com},\quad Heping Wang\thanks{Corresponding author E-mail address: wanghp@cnu.edu.cn}
\\
\small  School of Mathematical Sciences, Capital Normal
University, Beijing 100048\\
\small People's Republic of China
}
\date{}
\maketitle {\bf Abstract:} Consider the numerical integration
$${\rm Int}_{\mathbb S^d,w}(f)=\int_{\mathbb S^d}f({\bf x})w({\bf
x}){\rm d}\sigma({\bf x}) $$ for weighted Sobolev classes
$BW_{p,w}^r(\mathbb S^d)$ with  a Dunkl weight $w$ and weighted
Besov classes $BB_\gamma^\Theta(L_{p,w}(\mathbb S^d))$ with the
generalized smoothness index $\Theta $  and  a doubling weight $w$
on the unit sphere $\mathbb S^d$ of the Euclidean space $\mathbb
R^{d+1}$   in the deterministic and randomized case settings. For
$BW_{p,w}^r(\mathbb S^d)$  we obtain the optimal quadrature errors
in both settings. For $BB_\gamma^\Theta(L_{p,w}(\mathbb S^d))$ we
use the weighted least $\ell_p$ approximation and the standard
Monte Carlo algorithm to
 obtain  upper estimates of the quadrature errors  which are
optimal  if $w$ is an $A_\infty$ weight in the deterministic case
setting or  if $w$ is a product weight in the randomized case
setting. Our results show that randomized  algorithms can provide
a faster convergence rate  than that of the deterministic ones
when $p>1$.

Similar results are also established on the unit ball and the standard simplex of  $\mathbb R^d$.

{\bf Keywords:} {Numerical integration; Weighted Sobolev classes;
Weighted Besov classes of generalized smoothness; Deterministic
and randomized case settings; Weighted least $\ell_p$
approximation}

{\bf MSC(2000) subject classification:} 65D30; 65D32; 41A55;
65C05; 33C50; 33C52.

\maketitle

\section{Introduction}\label{sec1}

A critical problem in applied mathematics and data sciences is to calculate the numerical integration
\begin{equation}\label{1.1}{\rm Int}_{\Omega^d,w}(f)=\int_{\Omega^d}f({\bf x})w({\bf x}){\rm d}\mathsf m({\bf x}),\end{equation}
for a function $f$ belonging to a class $F_d$ of continuous
functions on $\Omega^d$, where $\Omega^d$ is a compact subset of
the Euclidean space equipped with a measure $w({\bf x}){\rm
d}\mathsf m({\bf x})$. For most cases,  we could not compute this
integration utilizing the fundamental theorem of the calculus
since there is no closed form expression of the antiderivatives.
Therefore, we have to approximate such integration numerically.
Meanwhile, in the applications we only know function values on
finite points  on $\Omega^d$ which are called \emph{standard
information}. Hence, we have to use algorithms based on these
finite function values.

One type of algorithms is the \emph{deterministic algorithm} of
the form
$$A_n(f):=\varphi_n\left[f({\bf x}_1), f({\bf x}_2),\dots,f({\bf x}_n)\right],$$where ${\bf x}_j\in \Oz^d$ can be chosen adaptively and $\varphi_n:\mathbb R^n\rightarrow\mathbb R$
is an arbitrary mapping. We denoted by $\mathcal{A}_n^{\rm det}$
the class of  all algorithms of this form. (Adaption means that the selection of
${\bf x}_j$ may depend on the already computed values $f({\bf x}_1),
f({\bf x}_2),\dots,f({\bf x}_{j-1})$.)
Then, for a positive integer $n$, we define
\begin{itemize}
  \item the \emph{deterministic case error} of an algorithm $A_n$ on $F_d$  by
\begin{equation*}
  e^{\rm det}({\rm Int}_{\Oz^d,w};F_d,\,A_n):=\sup\limits_{f\in F_d}\left|{\rm Int}_{\Oz^d,w}(f)-A_n(f)\right|;
\end{equation*}
  \item the \emph{$n$-th minimal (optimal) deterministic case error} on $F_d$  by
\begin{equation*}
  e_n^{\rm det}({\rm Int}_{\Oz^d,w};F_d):=\inf\limits_{A_n\in\mathcal{A}_n^{\rm det}}e^{\rm det}({\rm Int}_{\Oz^d,w};F_d,\,A_n).
\end{equation*}
\end{itemize}
 Usually we assume that the class $F_d$ is convex and
balanced. It follows from \cite{B3} that the $n$-th minimal
deterministic case error $e_n^{\rm det}({\rm Int}_{\Oz^d,w};F_d)$
can be achieved by linear algorithms, i.e., the algorithms of the
form
$$A_n^{\rm lin}(f)=\sum_{j=1}^n\lz_jf({\bf x}_j),\ \lz_j\in\mathbb R,\
j=1,\dots,n,$$which is also called a \emph{quadrature formula},
and, moreover, it follows from \cite[Theorem~4.7]{NW2008} that
\begin{equation}\label{1}
e_n^{\rm det}({\rm Int}_{\Oz^d,w};F_d)= \inf_{\sub{{\bf x}_1,\dots,{\bf x}_n\in \Omega^d}}\sup_{\sub{f\in F_d\\
  f({\bf x}_1)=\dots=f({\bf x}_n)=0}} \int_{\Omega^d}f({\bf x})w({\bf x}){\rm d}\mathsf m({\bf x}).
\end{equation}We say that a quadrature formula $A_n^{\rm lin}$ is \emph{positive} if $\lz_j\ge 0,j=1,\dots,n$, and is a
\emph{quasi-Monte-Carlo (QMC) algorithm} if $\lz_j=1/n,j=1,\dots,n$.

For usual  smooth classes the integration problem in the
deterministic setting  suffers from the curse of dimensionality if
the dimension $d$ is large. It is well known that   the standard
Monte-Carlo algorithm overcomes  the curse of dimensionality. In
this paper we consider  {randomized algorithms} (Monte-Carlo
algorithms). \emph{Randomized algorithms} (also called Monte-Carlo
algorithms) are understood as $\Sigma\otimes \mathcal B(F_d)$
measurable functions
$$(A^\oz)=(A^\oz(\cdot))_{\oz\in\mathcal{F}}: \mathcal{F}\times  F_d\to
\mathbb R,$$ where $\mathcal B( F_d)$  denotes the  Borel
$\sigma$-algebra of $ F_d$, $(\mathcal{F}, \Sigma, \mathcal P)$ is
a suitable probability space, and for any fixed $\oz\in
\mathcal{F}$, $A^\oz$ is a deterministic method with cardinality
$n(f,\oz)$. The number $n(f,\oz)$  may be randomized and
adaptively depend on the input, and the cardinality of $(A^\oz)$
is then defined by $${\rm Card}(A^\oz):=\sup_{f\in F_d}\mathbb
E_\oz\, n(f,\oz):=\sup\limits_{f\in F_d}\int_{\mathcal{F}}
n(f,\omega){\rm d}\mathcal P(\omega).$$ Denote by
$\mathcal{A}_n^{\rm ran}$  the class of all randomized algorithms
with cardinality not exceeding $n$. Then, for a positive integer
$n$, we define
\begin{itemize}
  \item the \emph{randomized case error} of a randomized algorithm $(A^\omega)$ on $F_d$  by
$$e^{\rm ran}({\rm Int}_{\Oz^d,w};F_d,(A^\omega)):=\sup\limits_{f\in F_d}\mathbb E_\omega|{\rm Int}_{\Oz^d,w}(f)-A^{\omega}(f)|;$$
  \item the \emph{$n$-th minimal  (optimal)  randomized case error} on $F_d$  by
\begin{equation*}
  e_n^{\rm ran}({\rm Int}_{\Oz^d,w};F_d):=\inf\limits_{(A^\omega)\in\mathcal{A}_n^{\rm ran}}e^{\rm ran}({\rm Int}_{\Oz^d,w};F_d,(A^\omega)).
\end{equation*}
\end{itemize}

Since a deterministic algorithm may be viewed as a randomized
algorithm, it follows from definitions that
\begin{equation*}
  e_n^{\rm ran}({\rm Int}_{\Oz^d,w};F_d)\le e_n^{\rm det}({\rm Int}_{\Oz^d,w};F_d).
\end{equation*}
Numerous results indicate that  the  optimal randomized quadrature
errors are generally less than  optimal deterministic quadrature
errors. Consequently, employing randomized algorithms proves to be
an effective strategy for reducing errors in a variety of
applications, particularly in dealing with high-dimensional
scenarios.

 Researchers have shown significant interest in the integration problem described in  \eqref{1.1} for Sobolev-type and Besov-type spaces.
 These function spaces are crucial in functional analysis, numerical analysis,
 approximation theory, and related fields (see, for example, \cite{Tem1993, Tem2018, DT}).
We can now review some previous findings in this area.
There is a vast literature on this classical problem for various unweighted function classes, especially for  Sobolev and Besov classes.  For abbreviation, when $w\equiv1$, we  write $ e_n^{\rm det}(F_d)\equiv  e_n^{\rm det}({\rm Int}_{\Omega^d,w};F_d)$ and $ e_n^{\rm ran}(F_d)\equiv  e_n^{\rm ran}({\rm Int}_{\Omega^d,w};F_d)$.
\begin{enumerate}
\item Consider the classical Sobolev class $BW_{p}^{r}([0,1]^{d})$,
$1\le p\le\infty$, $r\in\mathbb N$, defined by
 $$BW_{p}^{r}([0,1]^{d}):=\left\{f\in L_{p}([0,1]^{d}):\, \sum_{|\boldsymbol\alpha|_{1}\le r}\|D^{\boldsymbol\alpha}f\|_{p}\leq1\right\},$$
and  the H\"older class $C^{k,\gamma}([0,1]^d)$, $k\in\mathbb N_0$,
$0<\gz\le1$, defined by
$$C^{k,\gamma}([0,1]^d):=\left\{f\in C([0,1]^d):\,|D^{\boldsymbol\alpha} f({\bf x})-D^{\boldsymbol\az} f({\bf y})|\le \max_{1\le i\le d}|x_i-y_i|^\gz, \, |\boldsymbol\alpha|= k\right\},$$
where $\boldsymbol\alpha\in \mathbb N_0^d,\
|\boldsymbol\alpha|:=\sum_{i=1}^d\alpha_i$, and $D^{\boldsymbol\alpha} f$ is the
partial derivative  of order $\boldsymbol\alpha$ of $f$ in the sense of
distribution. Bakhvalov in \cite{B1} and \cite{B2} proved that\footnote{The notation $A_n\asymp B_n$ means that
$A_n\lesssim B_n$ and $A_n\gtrsim B_n$, and $A_n\lesssim B_n$ ($A_n\gtrsim B_n$)
means that there exists a constant $c>0$ independent of $n$
such that $A_n\leq cB_n$ ($A_n\geq cB_n$).}
 \begin{equation*}e_{n}^{\rm det}(C^{k,\gz}([0,1]^d))\asymp n^{-\frac{k+\gz}{d}}\quad {\rm and}\quad e_{n}^{\rm det}(BW_{\infty}^{r}([0,1]^{d}))\asymp n^{-\frac{r}{d}}.\end{equation*}
 Novak extended the second equivalence  result in \cite{N2} and \cite{N3}, and obtained, for $1\leq p<\infty$  and $r>d/p$,
 \begin{equation*}e_{n}^{\rm det}(BW_{p}^{r}([0,1]^{d}))\asymp n^{-\frac{r}{d}}. \end{equation*}
 Meanwhile, Novak considered the randomized case errors of the above two classes in \cite{N2} and \cite{N3},  and proved that
 \begin{equation*}e_{n}^{\rm ran}(C^{k,\az}([0,1]^d))\asymp
n^{-\frac{k+\az}{d}-\frac{1}{2}},\end{equation*}
and for $1\leq p\le\infty$ and $r>d/p$,
\begin{equation*}e_{n}^{\rm ran}(BW_{p}^{r}([0,1]^{d}))\asymp n^{-\frac{r}{d}-\frac{1}{2}+(\frac{1}{p}-\frac{1}{2})_{+}},\end{equation*}where
$a_{+}=\max(a,0)$.

\item Consider the anisotropic Sobolev class
$BW_{p}^{{\bf r}}([0,1]^{d}),\ 1\leq p\leq\infty,\ {\bf
r}=(r_{1},\cdots, r_{d})\in \mathbb{N}^{d}$, defined by
$$ BW_{p}^{{\bf r}}([0,1]^{d}):=\left\{f\in
L_{p}([0,1]^{d}):\,\sum_{j=1}^{d}\left\|\frac{\partial^{r_{j}}f}{\partial
x_{j}^{r_{j}}}\right\|_p\le 1\right\}.$$  Fang and Ye in
\cite{FY1} obtained for $g({\bf
r}):=\big(\sum_{j=1}^{d}r_{j}^{-1}\big)^{-1}>1/{p}$,
\begin{equation*}e_{n}^{\rm det}(BW_{p}^{{\bf r}}([0,1]^{d}))\asymp n^{-g({\bf r})},\end{equation*} and for $g({\bf
r})>1/{p}$,
\begin{equation*}e_{n}^{\rm ran}(BW_{p}^{{\bf r}}([0,1]^{d}))\asymp n^{-g({\bf
r})-\frac{1}{2}+(\frac{1}{p}-\frac{1}{2})_{+}}.
\end{equation*} For the anisotropic
H\"older-Nikolskii  classes, Fang and Ye  obtained the similar
results in \cite{FY1}.

\item Consider the Sobolev class with bounded mixed derivative
$BW_{p}^{r,\rm mix}([0,1]^{d}),\ r\in\mathbb {N}, \ 1\leq
p\leq\infty$, defined by
$$BW_{p}^{r,\rm mix}([0,1]^{d}):=\left\{f\in
L_{p}([0,1]^{d}):\,\sum_{|\boldsymbol\alpha|_{\infty}\leq
r}\|D^{\boldsymbol\alpha}f\|_{p}\leq1\right\},$$where
$|\boldsymbol\alpha|_{\infty}:=\max_{1\le i\le d}\az_i$. The authors
in \cite{BV, FKK,  SM, T3} obtained for $r>1/{p}$  and  $
1<p<\infty$,
\begin{equation*}e_{n}^{\rm det}(BW_{p}^{r,\rm mix}([0,1]^{d}))\asymp n^{-r}(\log n)^{\frac{d-1}{2}} .\end{equation*}
It was shown in \cite{KN, NUU, U} that for $r>\max\{1/p,1/2\}$ and
$1< p<\infty$,
\begin{equation*}e_{n}^{\rm ran}(BW_{p}^{r,\rm mix}([0,1]^{d}))\asymp n^{-r-\frac{1}{2}+(\frac{1}{p}-\frac{1}{2})_+}.\end{equation*}

\item For the Sobolev class $BW_p^r(\mathbb S^d)$, $1\le p\le
\infty$, $r>0$, on the unit sphere in $\mathbb R^{d+1}$,    it was proved in
\cite{BH, H, Wang} that for  $r>d/p$,
\begin{equation}\label{1.3}e_{n}^{\rm det}(BW_{p}^{r}(\mathbb{S}^d))\asymp n^{-\frac{r}d}.\end{equation}
Wang and Zhang in \cite{WZ} obtained  for $r>d/p$,
\begin{equation}\label{1.4}e_{n}^{\rm ran}(BW_{p}^{r}(\mathbb{S}^d))\asymp  n^{-\frac rd-\frac{1}{2}+(\frac{1}{p}-\frac{1}{2})_+}.
\end{equation}
\item For the  Besov class $BB_{\gamma}^\Theta(L_p(\mathbb S^d))$ of generalized smoothness, $1\le p,\gamma\le \infty,$  Duan, Ye and Li in \cite{DY}
obtained
\begin{equation}\label{1.5}e_{n}^{\rm det}(BB_{\gamma}^\Theta(L_p(\mathbb S^d)))\asymp \Theta(n^{-\frac1d}),\end{equation}
and
\begin{equation}\label{1.6}e_{n}^{\rm ran}(BB_{\gamma}^\Theta(L_p(\mathbb S^d)))\asymp \Theta(n^{-\frac1d}) n^{-\frac{1}{2}+(\frac{1}{p}-\frac{1}{2})_+}.
\end{equation}
   We remark that if $\Theta(t)=t^r,r>0$, the space $B_{\gamma}^\Theta(L_p(\mathbb S^d))$ coincides with the usual Besov space $B_{\gamma}^r(L_p(\mathbb S^d))$,
   which was first introduced in \cite{LR}, and \eqref{1.5} was
   obtained in \cite{HMS}.
  \end{enumerate}

There are few related results for weighted function classes. Let $\Omega^{d}$
 denote the unit sphere $\mathbb S^d$ with the usual surface measure ${\rm d}\sigma$ normalized by $\int_{\mathbb S^{d}}{\rm d}\sigma({\bf x})=1$, or
the unit ball $\mathbb B^d$ or the standard simplex
$\mathbb T^d$ with  the usual Lebesgue measure ${\rm d}{\bf x}$ on $\mathbb R^d$ normalized by $\int_{\Oz^d}{\rm d}{\bf x}=1$. See Sections \ref{sect2} and \ref{sect4} for related definitions.
\begin{enumerate}
  \item[6.] For the weighted Sobolev class $BW_{p,w_\mu}^r(\mathbb B^d)$, $1\le p\le
\infty$, $r>0$, $\mu\ge0$, Li and Wang in \cite{LW} obtained for $r>(d+2\mu)/p$
  \begin{equation}\label{1.9}
  e_{n}^{\rm det}\big({\rm Int}_{\mathbb B^d,w_\mu};BW_{p,w_\mu}^r(\mathbb B^d)\big)\asymp n^{-\frac rd}\end{equation}
and
  \begin{equation}\label{1.10}
  e_{n}^{\rm ran}\big({\rm Int}_{\mathbb B^d,w_\mu};BW_{p,w_\mu}^r(\mathbb B^d)\big)\asymp n^{-\frac rd-\frac12+\left(\frac1p-\frac12\right)_+}.
  \end{equation}
\item[7.]  Dai and Wang in \cite{DW} investigated the weighted Besov
class $BB_\gamma^r(L_{p,w}(\Omega^d))$, $1\le p\le \infty$, $0<\gamma\le
\infty$, $r>0$ with an $A_\infty$ weight $w$ on $\Omega
^d$. They obtained for $r>s_w/p$,
\begin{equation}\label{1.7}
e_{n}^{\rm det}({\rm Int}_{\Omega^d,w};BB_\gamma^r(L_{p,w}(\Omega^d)))\asymp
n^{-\frac{r}d}.
\end{equation}Furthermore, the upper estimate also holds for   all doubling weights. Particularly, for the weighted Besov class $BB_{\gamma}^r(L_{p,w_\mu}(\mathbb B^d))$, $1\le p\le \infty, 0<\gamma\le\infty$, with the Jacobi weight $w_\mu({\bf x}):=(1-\|{\bf x}\|^2)^{\mu-1/2},\mu\ge0$, Li and Wang in \cite{LW} obtained  for $r>(d+2\mu)/p$,
  \begin{equation}\label{1.8}
  e_{n}^{\rm ran}\left({\rm Int}_{\mathbb B^d,w_\mu};BB_\gamma^r(L_{p,w_\mu}(\mathbb B^d))\right)\asymp n^{-\frac rd-\frac12+\left(\frac1p-\frac12\right)_+}.
  \end{equation}
\end{enumerate}


The purpose of this paper is to  generalize the above results
 to weighted Sobolev
classes $BW_{p,w_{\boldsymbol \kappa}}^r(\Omega ^d)$ with
   a Dunkl weight on $\Omega^d$ and
    weighted Besov classes $BB_\gamma^\Theta(L_{p,w}(
\Omega^{d}))$  of generalized smoothness. (See  Sections
\ref{sect2} and \ref{sect4} for related definitions.) Weighted
Sobolev spaces with a Dunkl weight play a critical role in the
Dunkl theory (see \cite{DH, DX, DunXu, X05}). Besov spaces of
generalized smoothness have been studied in various domains,
  and this interest has been increasing in recent years. This growing focus can be attributed to two main directions.
   First, there is an emphasis on exploring the embeddings, limiting embeddings, and approximation results within Besov spaces.
    Second, it is also connected with applications in probability theory, stochastic processes, and the analysis
     of trace spaces on fractals, particularly in the study of so-called $h$-sets (see \cite{DT, FL, CK, SW, HM, Ml, M, Br, WT, WW1}).

 More precisely,  in the
deterministic and randomized  case settings, we obtain  sharp
asymptotic orders of optimal quadrature for the weighted Sobolev
classes $BW_{p,w_{\boldsymbol \kappa}}^r(\Omega ^d)$ with
   a Dunkl weight on $\Omega^d$,
   and  for the weighted Besov classes $BB_\gamma^\Theta(L_{p,w}(
\Omega^{d}))$  of generalized smoothness with a weight satisfying
some conditions on $\Omega^d$.  Our results extends
\eqref{1.3}-\eqref{1.8}.

Now let us briefly introduce our results on the unit sphere
$\mathbb S^d$. For our purpose, we shall consider \emph{the
product weights} of the form
\begin{equation}\label{1.11}
w_{\boldsymbol\kappa}({\bf x}):=\prod_{j=1}^m \left|\langle{\bf
x},{\bf v}_j\rangle\right|^{2\kappa_j},\quad{\bf x}\in\mathbb
R^{d+1},
\end{equation}
where
$\boldsymbol\kappa:=(\kappa_1,\dots,\kappa_m)\in[0,+\infty)^m$ and
${\bf v}_j\in\mathbb S^{d},j=1,\dots,m$. Furthermore, if
$\{\pm{\bf v}_j\}_{j=1}^m$ forms a root system of a finite
reflection group $G$ on $\mathbb S^d$, and $w_{\boldsymbol\kappa}$
is invariant under the reflection group $G$ with nonnegative
$\kappa_j$, then $w_{\boldsymbol\kappa}$ is often called \emph{a
Dunkl weight} (see Section 2 for definitions).

The following Theorem \ref{thm1} gives  results in the
deterministic case setting and its proof is standard. The proof of
upper estimates relies on positive quadrature rules; the proof of
lower estimates is based on traditional methods previously used in
\cite{DW}.

\begin{thm}\label{thm1}Let $1\le p\le\infty$, $r>0$, $0<\gamma\le\infty$, and $\Theta_1(t)$, $\Theta(t)=t^r\Theta_1(t)\in\Phi_s^*$.
\begin{itemize}
  \item[(i)] If $w_{\boldsymbol\kappa}$ is a Dunkl weight with the critical index $s_{w_{\boldsymbol\kappa}}$ on $\mathbb S^d$, then for $r>s_{w_{\boldsymbol\kappa}}/p$,
  \begin{equation}\label{1.4-0}
e_n^{\rm det}\left({\rm Int}_{\mathbb S^{d},w_{\boldsymbol\kappa}}; BW_{p,w_{\boldsymbol\kappa}}^r(\mathbb S^d)\right)\asymp n^{-\frac r{d}};
  \end{equation}
  \item [(ii)] If $w$ is an $A_\infty$ weight with the critical index $s_w$ on $\mathbb S^d$, then for $r>s_w/p$,
  \begin{equation*}
e_n^{\rm det}\left({\rm Int}_{\mathbb S^d,w}; BB_{\gamma}^\Theta(L_{p,w}(\mathbb S^d))\right)\asymp \Theta(n^{-\frac1{d}}).
\end{equation*}Moreover, the upper estimates  hold for   all doubling weights.
\end{itemize}
\end{thm}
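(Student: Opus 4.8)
The plan is to treat the upper and lower estimates separately, in both cases $(i)$ and $(ii)$, reading the lower bound off the information-based identity \eqref{1}.

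For the upper bounds I would proceed through three standard ingredients. First, recall the existence of a positive cubature formula $Q_N(f)=\sum_{j}\lambda_j f(\mathbf{x}_j)$ with $\lambda_j>0$, exact on the space $\Pi_N$ of spherical polynomials of degree $\le N$ and using $\asymp N^d$ nodes; such Marcinkiewicz--Zygmund cubatures are available for \emph{every} doubling weight, which is precisely why the upper estimate in $(ii)$ requires only the doubling hypothesis. Second, apply the Jackson inequality for the weight $w$ to control the best $L_{p,w}$-polynomial approximation, namely $E_N(f)_{p,w}\lesssim N^{-r}$ on $BW_{p,w}^r(\mathbb S^d)$ and $E_N(f)_{p,w}\lesssim\Theta(N^{-1})$ on $BB_{\gamma}^\Theta(L_{p,w}(\mathbb S^d))$. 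Third, bound the cubature error by the best $L_{1,w}$-approximation: writing $g=f-P$ with $P\in\Pi_N$ a near-best approximant, exactness gives ${\rm Int}_{\mathbb S^d,w}(f)-Q_N(f)={\rm Int}_{\mathbb S^d,w}(g)-Q_N(g)$, the first term is $\le\|g\|_{1,w}$, and the second is controlled by $\|g\|_{1,w}$ through a Marcinkiewicz--Zygmund estimate; since the weighted measure is finite, $\|g\|_{1,w}\lesssim\|g\|_{p,w}$, so choosing $n\asymp N^d$ yields $e_n^{\rm det}\lesssim N^{-r}\asymp n^{-r/d}$, respectively $\lesssim\Theta(N^{-1})\asymp\Theta(n^{-1/d})$.

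For the lower bounds I would use \eqref{1} together with a bump-function construction as in \cite{DW}. First partition $\mathbb S^d$ into $M\asymp n$ cells $R_i$ of equal weighted measure $w(R_i)\asymp 1/n$ and bounded eccentricity, so that each $R_i$ sits inside a cap of Euclidean radius $\delta\asymp n^{-1/d}$ and contains a concentric cap of comparable radius. Given any $n$ nodes, at least $cn$ cells are node-free; on each such empty cell place a smooth bump $\phi_i$ supported in $R_i$ and normalized so that its norm in the relevant class equals $1$. A direct scaling computation gives $\|\phi_i\|\asymp h_i\delta^{-r}w(R_i)^{1/p}$ and $\int_{\mathbb S^d}\phi_i\,w\asymp h_i\,w(R_i)\asymp\delta^{r}w(R_i)^{1/p'}$ (with the corresponding $\Theta$-version in case $(ii)$). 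Forming $f=|E|^{-1/p}\sum_{i\in E}\phi_i$ over the set $E$ of empty cells produces an admissible test function vanishing at all nodes, and summing the disjointly supported pieces yields $\int_{\mathbb S^d}f\,w\gtrsim n^{-r/d}$, respectively $\gtrsim\Theta(n^{-1/d})$, the desired lower bound. The hypothesis $r>s_{w_{\boldsymbol\kappa}}/p$ (resp.\ $r>s_w/p$) guarantees the embedding into $C(\mathbb S^d)$, so that point evaluations are well defined and the bumps are genuinely admissible.

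The main obstacle, and the only place the argument is not entirely routine, is the step $|Q_N(g)|\lesssim\|g\|_{1,w}$ in the upper bound: controlling the positive cubature of the non-polynomial remainder $g=f-P$ by its $L_{1,w}$-norm rather than the lossy $L_\infty$-norm, which would cost the embedding exponent $s_w/p$ and spoil the sharp rate. I expect to resolve this by a Littlewood--Paley/needlet decomposition adapted to the doubling weight together with the Marcinkiewicz--Zygmund inequalities, as in the cited works. In case $(ii)$ the second delicate point is the matching lower bound: computing the generalized-smoothness Besov norm of the summed bumps and recovering the sharp order $\Theta(n^{-1/d})$ uses the reverse-doubling property of an $A_\infty$ weight to keep each $\int_{R_i}\phi_i\,w$ a fixed proportion of $w(R_i)$, which is exactly why the lower half of $(ii)$ needs $A_\infty$ rather than mere doubling.
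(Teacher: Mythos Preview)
Your upper-bound outline is essentially the paper's argument. The ``Littlewood--Paley'' fix you anticipate is exactly what is done: decompose $f-\delta_{2^{m-1}}(f)=\sum_{j>m}\sigma_j(f)$ into dyadic polynomial blocks and control $\sum_i\lambda_i|\sigma_j(f)(\boldsymbol\xi_i)|^p$ via a Nikolskii-type transference (Lemma~\ref{lem3.2}) that pushes an $L_{p,w}$-MZ inequality from degree $N$ to degree $2^{j-1}$ at a cost $(2^{j-1}/N)^{s_w}$. Note that your intermediate claim $|Q_N(g)|\lesssim\|g\|_{1,w}$ is false as stated (take $g$ spiking at a node); only after the block decomposition does the estimate go through, so do not try to prove that inequality directly.

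Your lower-bound plan, however, has a genuine gap and also diverges from the paper's route. The partition you postulate---cells $R_i$ simultaneously of weighted measure $w(R_i)\asymp 1/n$ \emph{and} Euclidean radius $\asymp n^{-1/d}$---does not exist for a general $A_\infty$ weight: where $w$ is small, cells of weighted mass $1/n$ are much larger than $n^{-1/d}$, and vice versa. Moreover, the ``direct scaling computation'' $\|\phi_i\|\asymp h_i\delta^{-r}w(R_i)^{1/p}$ is not direct in the Dunkl--Sobolev case, since $(-\Delta_{\boldsymbol\kappa,0})^{r/2}$ contains reflection-difference terms and is nonlocal; getting the right bound for bumps requires restricting their supports away from the mirror hyperplanes and tracking orbits under $G$ (see the construction preceding Lemma~\ref{lem44}, which the paper carries out only for product weights in the \emph{randomized} lower bound).

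The paper avoids all of this for the deterministic lower bound by a polynomial trick (Lemma~\ref{lem3.4}, taken from \cite{DW}): given nodes $\boldsymbol\xi_1,\dots,\boldsymbol\xi_n$, the subspace $X_0=\{g\in\Pi_N:g(\boldsymbol\xi_j)=0\}$ has dimension $\ge\tfrac12\dim\Pi_N$, and for any $A_\infty$ weight there is $g_0\in X_0$ with $\|g_0\|_{p,w}\asymp1$ for every $1\le p\le\infty$. Then $f_0=N^{-r}g_0^2$ (resp.\ $\Theta(N^{-1})g_0^2$) is nonnegative, vanishes at all nodes, and its class norm is controlled by a single application of Bernstein's inequality \eqref{2.1} (resp.\ the estimate \eqref{3.5-000}); its integral is $\asymp N^{-r}\|g_0\|_{2,w}^2\asymp n^{-r/d}$ (resp.\ $\asymp\Theta(n^{-1/d})$). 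This sidesteps weighted partitions entirely, and the $A_\infty$ hypothesis enters only through Lemma~\ref{lem3.4}, not through any reverse-doubling argument on bump integrals.
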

\begin{rem}\label{rem1} The upper bound of \eqref{1.4-0} can be derived from \eqref{1.7}. In fact,
 since the Dunkl weight $w_{\boldsymbol\kappa}$ is a doubling weight,
 together with the fact that $W_{p,w_{\boldsymbol\kappa}}^r(\mathbb S^d)$ is
 continuously embedded into $B_\infty^r(L_{p,w_{\boldsymbol\kappa}}(\mathbb S^d))$ (see \eqref{2.8-0}), we have, for $1\le p\le\infty$ and $r>s_{w_{\boldsymbol\kappa}}/p$,
$$e_n^{\rm det}\left({\rm Int}_{\mathbb S^{d},w_{\boldsymbol\kappa}}; BW_{p,w_{\boldsymbol\kappa}}^r(\mathbb S^d)\right)\lesssim e_{n}^{\rm det}({\rm Int}_{\mathbb S^d,w_{\boldsymbol\kappa}};BB_\infty^r(L_{p,w_{\boldsymbol\kappa}}(\mathbb S^d)))\lesssim
n^{-\frac{r}d}.$$
\end{rem}

Our main contribution is the following theorem which gives results
in the randomized case setting. To prove the upper estimates, we
develop the weighted least $\ell_p$ approximation, which is a
``good" $L_p$ approximation, and employ a standard Monte Carlo
algorithm to derive an optimal randomized algorithm.
      We remark that the weighted least $\ell_p$ approximation operator is favored over the filtered hyperinterpolation operators discussed in \cite{WS,WW1,LW,WZ},
      since  the latter is not applicable to general doubling weights.  To prove the lower estimates,
      we utilize Novak's technique, as outlined in \cite{LW}, where the primary challenge lies in constructing ``fooling" functions to satisfy the proper conditions.

\begin{thm}\label{thm2} Let $1\le p\le\infty$, $r>0$, $0<\gamma\le\infty$, and $\Theta_1(t)$, $\Theta(t)=t^r\Theta_1(t)\in\Phi_s^*$.
\begin{itemize}
  \item [(i)] If $w_{\boldsymbol\kappa}$ is a Dunkl weight with the critical index $s_{w_{\boldsymbol\kappa}}$ on $\mathbb S^d$, then for $r>s_{w_{\boldsymbol\kappa}}/p$,
 \begin{equation}\label{1.4-1}
e_n^{\rm ran}\left({\rm Int}_{\mathbb S^{d},w_{\boldsymbol\kappa}}; BW_{p,w_{\boldsymbol\kappa}}^r(\mathbb S^d)\right)\asymp n^{-\frac r{d}-\frac12+\left(\frac1p-\frac12\right)_+}.
\end{equation}
  \item [(ii)] If $w_{\boldsymbol\kappa}$ is a product weight with the critical index $s_{w_{\boldsymbol\kappa}}$ on $\mathbb S^d$, then for $r>s_{w_{\boldsymbol\kappa}}/p$,
  \begin{equation*}
e_n^{\rm ran}\left({\rm Int}_{\mathbb S^d,w_{\boldsymbol\kappa}}; BB_{\gamma}^\Theta(L_{p,w_{\boldsymbol\kappa}}(\mathbb S^d))\right)\asymp \Theta(n^{-\frac1{d}})n^{-\frac12+(\frac1p-\frac12)_+}.
\end{equation*}Moreover, the upper estimates  hold for   all doubling weights.
\end{itemize}\end{thm}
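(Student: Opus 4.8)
The plan is to prove matching upper and lower bounds in both parts by splitting $f$ into a polynomial part handled deterministically and a residual handled by plain Monte Carlo. Throughout I normalize so that $\int_{\mathbb S^d}w_{\boldsymbol\kappa}\,{\rm d}\sigma=1$.

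\emph{Upper estimates (both parts, all doubling weights).} Fix $N$ with $N^d\asymp n$ and let $\ell_N f\in\Pi_N$ be the weighted least $\ell_p$ approximation of $f$, i.e.\ the polynomial of degree $\le N$ minimizing the weighted discrete $\ell_p$ norm of the residual at $M\asymp N^d$ well-distributed nodes; its coefficients are linear functionals of the sampled values $f(\mathbf x_1),\dots,f(\mathbf x_M)$. This operator, to be developed in the body of the paper, is available for \emph{every} doubling weight and furnishes the near-best bounds $\|f-\ell_N f\|_{p,w_{\boldsymbol\kappa}}\lesssim N^{-r}$ on $BW^r_{p,w_{\boldsymbol\kappa}}(\mathbb S^d)$ and $\|f-\ell_N f\|_{p,w_{\boldsymbol\kappa}}\lesssim\Theta(N^{-1})$ on $BB^\Theta_\gamma(L_{p,w_{\boldsymbol\kappa}}(\mathbb S^d))$. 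Since $\ell_N f$ is a polynomial, ${\rm Int}_{\mathbb S^d,w_{\boldsymbol\kappa}}(\ell_N f)$ is obtained exactly from a positive cubature formula exact on $\Pi_{cN}$ with $O(N^d)$ nodes, at no extra sampling cost. For the residual $g:=f-\ell_N f$ I apply the standard Monte Carlo rule $\frac1m\sum_{i=1}^m g(\xi_i)$ with $\xi_1,\dots,\xi_m$ i.i.d.\ $\sim w_{\boldsymbol\kappa}\,{\rm d}\sigma$ and $m\asymp n$; this is unbiased. The quantitative engine is the Marcinkiewicz--Zygmund / von Bahr--Esseen moment inequality $\mathbb E\bigl|\sum_i X_i\bigr|^p\lesssim\sum_i\mathbb E|X_i|^p$ for independent mean-zero summands when $1\le p\le2$, which after Jensen gives
$$\mathbb E_\omega\Bigl|{\rm Int}_{\mathbb S^d,w_{\boldsymbol\kappa}}(g)-\frac1m\sum_{i=1}^m g(\xi_i)\Bigr|\lesssim m^{-\frac12+(\frac1p-\frac12)_+}\,\|g\|_{p,w_{\boldsymbol\kappa}},$$
the range $p\ge2$ being the elementary variance bound together with $\|g\|_{2,w_{\boldsymbol\kappa}}\le\|g\|_{p,w_{\boldsymbol\kappa}}$. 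Inserting the approximation bounds and $m\asymp N^d\asymp n$ yields the upper estimates in (i) and (ii) for all doubling weights.

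\emph{Lower estimates: reduction and two fooling families.} Following Novak's technique as in \cite{LW}, I reduce to an average-case bound via Bakhvalov's principle: for any probability measure $\mu$ on the class, $e_n^{\rm ran}\gtrsim\inf_{A\in\mathcal A_{2n}^{\rm det}}\mathbb E_{f\sim\mu}|{\rm Int}_{\mathbb S^d,w_{\boldsymbol\kappa}}(f)-A(f)|$. I then build bumps $\phi_1,\dots,\phi_L$ with pairwise disjoint supports, each a fixed smooth profile of height $h$ concentrated on a cap of radius $\asymp M^{-1/d}$ and $w_{\boldsymbol\kappa}$-measure $\asymp 1/M$, so that ${\rm Int}_{\mathbb S^d,w_{\boldsymbol\kappa}}(\phi_k)\asymp h/M$. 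The first family takes $L\asymp M\asymp n$ and random signs, $f=\sum_k\varepsilon_k\phi_k$ with i.i.d.\ $\varepsilon_k\in\{\pm1\}$; membership in $BW^r_{p,w_{\boldsymbol\kappa}}$ (resp.\ $BB^\Theta_\gamma$) forces $h\asymp M^{-r/d}$ (resp.\ $h\asymp\Theta(M^{-1/d})$), a $2n$-point rule pins down at most $2n$ of the signs, and Khintchine's inequality applied to the remaining $\gtrsim L$ terms yields error $\gtrsim\sqrt L\,(h/M)\asymp n^{-\frac rd-\frac12}$ (resp.\ $\Theta(n^{-\frac1d})\,n^{-\frac12}$). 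The second family lets $\mu$ be uniform on $\{0,\phi_1,\dots,\phi_L\}$ with $L\asymp M\asymp n$, where now each $\phi_k$ individually saturates the ball, forcing $h\asymp M^{\frac1p-\frac rd}$ (resp.\ $h\asymp\Theta(M^{-1/d})M^{1/p}$); since a $2n$-point rule meets the support of at most $2n$ caps, with probability $\ge\frac12$ the drawn bump is unseen and indistinguishable from $0$, so the error is $\gtrsim{\rm Int}_{\mathbb S^d,w_{\boldsymbol\kappa}}(\phi_k)\asymp h/M\asymp n^{-\frac rd-1+\frac1p}$ (resp.\ $\Theta(n^{-\frac1d})\,n^{-1+\frac1p}$). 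Taking the larger of the two valid bounds reproduces the factor $n^{-\frac12+(\frac1p-\frac12)_+}$ and matches the upper estimates.

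\emph{Main obstacle.} The delicate point is the fooling construction for the Besov part (ii): one must certify that the bumps really lie in $BB^\Theta_\gamma(L_{p,w_{\boldsymbol\kappa}}(\mathbb S^d))$ with the stated dependence of $h$ on the generalized smoothness $\Theta$ and on the $w_{\boldsymbol\kappa}$-measures of concentric caps, which demands sharp two-sided estimates for those weighted cap measures together with weighted Nikolskii inequalities. These are at hand for product weights but not for a general $A_\infty$ (or merely doubling) weight, which is exactly why the sharp lower bound in (ii) is stated only for product weights while the upper bound survives for all doubling weights; for the Dunkl-weight Sobolev class (i) the product structure of $w_{\boldsymbol\kappa}$ supplies the required estimates directly, so both bounds are sharp.
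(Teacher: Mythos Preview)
Your plan is essentially the paper's own: weighted least $\ell_p$ approximation on an $L_{p,w}$-MZ family plus plain Monte Carlo for the upper bounds, and Novak's two fooling families (Rademacher sums for $p\ge2$, single bumps for $p<2$) supported on caps where the product weight is $\asymp 1$ for the lower bounds. Two small corrections are worth making. First, the weighted least $\ell_p$ operator is \emph{not} linear unless $p=2$; what matters is only that $L_{N,p}(f)$ is determined by the samples $f(\mathbf x_1),\dots,f(\mathbf x_M)$, so your ``coefficients are linear functionals'' should be weakened accordingly. Second, for the Sobolev lower bound in (i) the phrase ``the product structure of $w_{\boldsymbol\kappa}$ supplies the required estimates directly'' understates the issue: the weighted Sobolev norm involves $(-\Delta_{\boldsymbol\kappa,0})^{r/2}$, a differential--\emph{difference} operator that does not preserve supports but spreads $\operatorname{supp}\varphi_j$ over its $G$-orbit; the paper handles this by showing the orbit has bounded overlap ($\le\#G$) and then applying a Kolmogorov-type inequality to pass from integer to fractional powers, which is the genuinely Dunkl-specific step your sketch elides.
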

\begin{rem}\label{rem2}
For the special case $w_{\boldsymbol\kappa}({\bf
x})=\prod_{j=1}^{d+1}|x_j|^{2\kappa_j}$, it has been given in
\cite[Theorem 1]{HW} that for $1\le p\le\infty$ and $r>0$,
$$d_n\left(BW_{p,w_{\boldsymbol\kappa}}^r(\mathbb S^d),L_{p,w_{\boldsymbol\kappa}}(\mathbb S^d)\right)\asymp  n^{-\frac r{d}},$$
where $d_n$ are the Kolmogorov numbers. Together with  \cite[Corollary 3]{K} we have
$$
e_n^{\rm ran}\left({\rm Int}_{\mathbb S^{d},w_{\boldsymbol\kappa}}; BW_{2,w_{\boldsymbol\kappa}}^r(\mathbb S^d)\right) \lesssim n^{-\frac12}d_{n}\left(BW_{2,w_{\boldsymbol\kappa}}^r(\mathbb S^d),L_{2,w_{\boldsymbol\kappa}}(\mathbb S^d)\right)\asymp  n^{-\frac12-\frac r{d}}.
$$Then, by the fact that $W_{2,w_{\boldsymbol\kappa}}^r(\mathbb S^d)$ is continuously embedded into $W_{p,w_{\boldsymbol\kappa}}^r(\mathbb S^d)$ for $ 2\le p\le\infty$,
we get  that
\begin{align*}
e_n^{\rm ran}\left({\rm Int}_{\mathbb S^{d},w_{\boldsymbol\kappa}}; BW_{p,w_{\boldsymbol\kappa}}^r(\mathbb S^d)\right)&\le e_n^{\rm ran}\left({\rm Int}_{\mathbb S^{d},w_{\boldsymbol\kappa}}; BW_{2,w_{\boldsymbol\kappa}}^r(\mathbb S^d)\right)\lesssim   n^{-\frac12-\frac r{d}},\ 2\le p\le\infty.
\end{align*}which gives the
partial results of \eqref{1.4-1}.
\end{rem}

In the  proof of Theorem \ref{thm2},  we get asymptotics of the
(linear) sampling numbers of weighted
 Besov class $B_{\gamma}^\Theta(L_{p,w}(\mathbb S^d))$ with an $A_\infty$ weight in $L_{q.w}(\mathbb S^d)$ for the case of $1\le q\le p\le\infty$.
Let $F_d$ be a class of continuous functions on $\Omega^d$, and
$(X,\|\cdot\|_X)$ be a normed linear space of functions on
$\Omega^d$. For  $n\in\mathbb N$, define
\begin{itemize}
\item the \emph{$n$-th sampling number (or optimal recovery)} of $F_d$ in $X$ as
\begin{align*}
g_n(F_d,X) \,:=\,
\inf_{\substack{\ \boldsymbol\xi_1,\dots,\boldsymbol\xi_n\in \Omega^d\\ \varphi:\ \mathbb R^n\rightarrow X}}
\sup_{f\in F_d}\|f-\varphi(f(\boldsymbol\xi_1),\dots,f(\boldsymbol\xi_n))\|_X,
\end{align*}
\item the \emph{$n$-th linear sampling number} of $F_d$ in $X$ as
\begin{align*}
g_n^{\rm lin}(F_d,X) \,:=\,
\inf_{\substack{\boldsymbol\xi_1,\dots,\boldsymbol\xi_n\in \Omega^d\\ \psi_1,\dots,\psi_n\in X}}\,
\sup_{f\in F_d}\,
\Big\|f - \sum_{i=1}^n f(\boldsymbol\xi_i)\, \psi_i\Big\|_X.
\end{align*}
\end{itemize}Clearly, we have
\begin{equation}\label{1.14}
  g_n^{\rm lin}(F_d,X)\ge g_n(F_d,X),
\end{equation} and it is well known (see \cite{TWW}) that for a balanced convex set
$F_d$,
\begin{equation}\label{1.15}
   g_n(F_d,X)\ge\inf_{\substack{\boldsymbol\xi_1,\dots,\boldsymbol\xi_n\in \Omega^d}}\sup_{\substack{f\in F_d\\
  f(\boldsymbol\xi_1)=\dots=f(\boldsymbol\xi_n)=0}}\|f\|_X.
\end{equation}
We obtain the following Theorem \ref{thm1.4} which gives  sharp
asymptotic orders of sampling numbers for $1\le q\le p\le\infty$.
\begin{thm} \label{thm1.4}Let $0<\gamma\le\infty$ and $\Theta_1(t)$, $\Theta(t)=t^r\Theta_1(t)\in\Phi_s^*$.
If $w$ is an $A_\infty$ weight with the critical index $s_w$ on $\mathbb S^d$, and $r>s_{w}/p$, then for $1\le q\le p\le\infty$
\begin{equation}\label{1.16}
   g_n(BB_\gamma^\Theta(L_{p,w}(\mathbb S^d)),L_{q,w}(\mathbb S^d))\asymp \Theta(n^{-\frac1d}).
\end{equation}In particular, for $1\le q\le 2$,
\begin{equation}\label{1.17}
    g_n^{\rm lin}(BB_\gamma^\Theta(L_{2,w}(\mathbb S^d)),L_{q,w}(\mathbb S^d))\asymp \Theta(n^{-\frac1d}).
\end{equation}

Moreover, the above upper estimates hold also for all doubling weights.
\end{thm}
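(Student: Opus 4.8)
The plan is to prove \eqref{1.16} by establishing the matching upper and lower bounds separately, and then to obtain the linear assertion \eqref{1.17} as a by-product. The guiding observation is that the target exponent $q$ affects neither bound. Since $w\,{\rm d}\sigma$ is a finite measure, for $1\le q\le p$ one has $\|g\|_{q,w}\lesssim\|g\|_{p,w}$, so recovery in the weaker norm $L_{q,w}$ is no harder than in $L_{p,w}$; conversely $\|g\|_{1,w}\lesssim\|g\|_{q,w}$ for $q\ge1$, so the sampling numbers are monotone in $q$ and smallest at $q=1$. Hence it suffices to prove the upper bound with target $L_{p,w}$ and the lower bound with target $L_{1,w}$, the constants being allowed to depend on the total mass of $w\,{\rm d}\sigma$.

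For the upper bound I would invoke the weighted least $\ell_p$ approximation developed for \thmref{thm2}. Given $n$, set $N\asymp n^{1/d}$ and choose $\asymp N^d\asymp n$ nodes carrying a positive quadrature formula exact on $\Pi_N$ (the spherical polynomials of degree at most $N$); for a doubling weight such Marcinkiewicz--Zygmund systems exist and give a discrete $L_{p,w}$-norm equivalence on $\Pi_N$. Let $L_N$ be the associated least $\ell_p$ operator: it uses only the $n$ sampled values, reproduces $\Pi_N$, and is near-best, $\|f-L_Nf\|_{p,w}\lesssim E_N(f)_{p,w}$, where $E_N(f)_{p,w}$ denotes the error of best $L_{p,w}$-approximation from $\Pi_N$. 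The hypothesis $r>s_w/p$ forces the embedding of $BB_\gamma^\Theta(L_{p,w}(\mathbb S^d))$ into $C(\mathbb S^d)$, so point evaluations are well defined, while the Jackson inequality for generalized smoothness (the defining characterization of $\Phi_s^*$-Besov norms via best approximation) yields $E_N(f)_{p,w}\lesssim\Theta(N^{-1})\|f\|_{BB_\gamma^\Theta(L_{p,w})}$. Chaining these with $\|f-L_Nf\|_{q,w}\lesssim\|f-L_Nf\|_{p,w}$ gives $g_n\bigl(BB_\gamma^\Theta(L_{p,w}(\mathbb S^d)),L_{q,w}(\mathbb S^d)\bigr)\lesssim\Theta(n^{-1/d})$ for every $1\le q\le p$, using only the doubling property of $w$.

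For the lower bound I would reduce to the quadrature result already contained in \thmref{thm1}(ii). Any recovery scheme with nodes $\boldsymbol\xi_1,\dots,\boldsymbol\xi_n$ and reconstruction map $\varphi$ produces a deterministic quadrature $A_n(f):=\int_{\mathbb S^d}\varphi\bigl(f(\boldsymbol\xi_1),\dots,f(\boldsymbol\xi_n)\bigr)({\bf x})\,w({\bf x})\,{\rm d}\sigma({\bf x})$, and, writing $\varphi$ for $\varphi(f(\boldsymbol\xi_1),\dots,f(\boldsymbol\xi_n))$,
\[
\bigl|{\rm Int}_{\mathbb S^d,w}(f)-A_n(f)\bigr|\le\int_{\mathbb S^d}\bigl|f-\varphi\bigr|\,w({\bf x})\,{\rm d}\sigma({\bf x})=\|f-\varphi\|_{1,w}.
\]
Taking the supremum over the class and the infimum over schemes gives $e_n^{\rm det}\bigl({\rm Int}_{\mathbb S^d,w};BB_\gamma^\Theta(L_{p,w}(\mathbb S^d))\bigr)\le g_n\bigl(BB_\gamma^\Theta(L_{p,w}(\mathbb S^d)),L_{1,w}(\mathbb S^d)\bigr)$. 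For an $A_\infty$ weight \thmref{thm1}(ii) evaluates the left side as $\asymp\Theta(n^{-1/d})$, and combined with the monotonicity $g_n(\cdot,L_{1,w})\lesssim g_n(\cdot,L_{q,w})$ for $q\ge1$ this delivers $g_n\gtrsim\Theta(n^{-1/d})$ for all $1\le q\le p$, completing \eqref{1.16}.

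Finally, \eqref{1.17} follows because for $p=2$ the least $\ell_2$ operator $L_N$ is \emph{linear} in the data, being the orthogonal projection onto $\Pi_N$ in the discrete inner product induced by the quadrature; thus the upper bound above is attained by a linear sampling algorithm, so $g_n^{\rm lin}\lesssim\Theta(n^{-1/d})$ for $1\le q\le2$, while the matching lower bound comes from \eqref{1.14} and \eqref{1.16}. I expect the main obstacle to be the near-best estimate $\|f-L_Nf\|_{p,w}\lesssim E_N(f)_{p,w}$ for a general doubling weight: it requires the Marcinkiewicz--Zygmund equivalence together with Nikolskii-type inequalities controlling point evaluations of polynomials by their $L_{p,w}$-norms, and is the technical heart of the least $\ell_p$ machinery. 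The lower bound, by contrast, is essentially immediate once \thmref{thm1}(ii) is available.
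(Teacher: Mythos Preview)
Your overall strategy matches the paper's: the upper bound comes from the weighted least $\ell_p$ operator $L_{N,p}$, the lower bound from a fooling argument tied to the deterministic quadrature problem, and \eqref{1.17} from the linearity of $L_{N,2}$. Two points deserve comment.

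First, your formulation of the key upper-bound estimate as a \emph{near-best} inequality $\|f-L_Nf\|_{p,w}\lesssim E_N(f)_{p,w}$ is not the right intermediate statement, and this is more than a technicality. The obstruction is exactly the one you flag: writing $P^*$ for a best $L_{p,w}$-approximant from $\Pi_N$, the MZ equivalence controls $\|P^*-L_Nf\|_{p,w}$ by $\|f-P^*\|_{(p)}$, but $f-P^*$ is \emph{not} a polynomial, so there is no passage back to $\|f-P^*\|_{p,w}=E_N(f)_{p,w}$. What the paper actually proves (Theorem~\ref{thm3.1}) is the direct bound $\|f-L_{N,p}f\|_{p,w}\lesssim\Theta(N^{-1})\|f\|_{B_\gamma^\Theta(L_{p,w})}$, obtained by expanding $f-\delta_{2^{m-1}}(f)=\sum_{j>m}\sigma_j(f)$, applying Lemma~\ref{lem3.2} to get $\|\sigma_j(f)\|_{(p)}\lesssim(2^{j-1}/N)^{s_w/p}\|\sigma_j(f)\|_{p,w}$, and summing; the hypothesis $r>s_w/p$ is precisely what makes this geometric series converge. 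So your outline is correct once ``near-best plus Jackson'' is replaced by this tail argument.

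Second, for the lower bound the paper does not route through the quadrature error but applies \eqref{1.15} directly: given nodes $\boldsymbol\xi_1,\dots,\boldsymbol\xi_n$, the fooling function $f_1=C\Theta(N^{-1})g_0^2$ from Lemma~\ref{lem3.4} lies in $BB_\gamma^\Theta(L_{p,w})$, vanishes at the nodes, and has $\|f_1\|_{q,w}\asymp\Theta(n^{-1/d})$ for every $q$. Your reduction $e_n^{\rm det}\le g_n(\cdot,L_{1,w})$ followed by Theorem~\ref{thm1}(ii) is a correct and slightly more modular alternative---it is equivalent because Theorem~\ref{thm1}(ii) is itself proved with the same $f_1$.
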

\begin{rem}For the special case $w\equiv 1$, Theorem \ref{thm1.4}  has been given in \cite[Theorem 1.1]{WW1}. We conjecture that for any  doubling weight $w$, and for all $1 \leq p, q \leq \infty$, $r > s_w/p$,
$$g_n(BB_\gamma^\Theta(L_{p,w}(\mathbb S^d)),L_{q,w}(\mathbb S^d))\asymp g_n^{\rm lin}(BB_\gamma^\Theta(L_{p,w}(\mathbb S^d)),L_{q,w}(\mathbb S^d))\asymp \Theta(n^{-\frac1d})n^{\left(\frac1p-\frac1q\right)_+}.$$
\end{rem}

This paper is structured as follows. In Section \ref{sect2}, we
will review some preliminary knowledge on the sphere $\mathbb
S^d$. Section \ref{sect3} will identify  asymptotic orders of the
weighted Sobolev and weighted Besov-type classes on  $\mathbb
S^d$, addressing both deterministic and randomized cases.
Additionally, through the connections between the sphere, the
ball, and the simplex, we will derive corresponding results for
the unit ball and the standard simplex in $\mathbb{R}^d$ in
Section \ref{sect4}. Finally we give some concluding remarks in
Section \ref{sect5}.

\section{Harmonic analysis on the unit sphere}\label{sect2}

In this section we collect some technical facts regarding doubling weights, spherical harmonics, weighted function spaces. Most of materials can be found in \cite{DW,DX,X05}.

\subsection{Basics.}
 For an integer $d\ge1$, we denote by $$\mathbb{S}^{d}:=\{{\bf x}\in \mathbb{R}^{d+1}:\,\|{\bf x}\|^2:=\langle{\bf x},{\bf x}\rangle=x_1^2+\dots+x_{d+1}^2=1\}$$
the unit sphere of $\mathbb R^{d+1}$ endowed with the usual Lebesgue measure ${\rm d}\sigma$ normalized by $\int_{\mathbb S^{d}}{\rm d}\sigma({\bf x})=1$. Let
$d_{\mathbb S}({\bf x},{\bf y}):=\arccos\langle{\bf x}, {\bf y}\rangle$ be the geodesic distance of ${\bf x},{\bf y}\in\mathbb S^{d}$,
and ${\rm c}({\bf x},r):=\{{\bf y}\in \mathbb S^{d}:\, { d}_{\mathbb S}({\bf x},{\bf y})\le r\}$ be the spherical cap centered at ${\bf x}\in\mathbb S^{d}$ with radius $r>0$.
Given a spherical cap $B:={\rm c}({\bf x},r)$ and a constant $c>0$, we denote by $cB$ the spherical cap ${\rm c}({\bf x},cr)$, which has the same center as $B$ but $c$ times the radius. For $\varepsilon>0$, we say a finite subset $\Xi$ of $\mathbb S^d$ is \emph{maximal $\varepsilon$-separated} if
$d_{\mathbb S}({\bf x},{\bf y})>\varepsilon$ for any distinct ${\bf x},{\bf y}\in\Xi$ and $\max_{{\bf x}\in\mathbb S^d}{\rm dist}({\bf x},\Xi)\le\varepsilon$, where ${\rm dist}({\bf x},\Xi):=\min_{{\bf y}\in\Xi}d_{\mathbb S}({\bf x},{\bf y})$.

Given a weight function $w$ (i.e., a nonnegative integrable function) on $\mathbb S^{d}$,  we denote by $L_{p,w}\equiv L_{p,w}(\mathbb S^d)$, $1\le p<\infty$
the space of all real functions $f$ on $\mathbb S^{d}$ endowed with finite norm
$$\|f\|_{p,w}\equiv\|f\|_{L_{p,w}(\mathbb S^{d})}:=\Big(\int_{\mathbb S^{d}}|f({\bf x})|^pw({\bf x}){\rm d}\sigma({\bf x})\Big)^{1/p},$$
and  by  $L_\infty\equiv L_{\infty,w}(\mathbb S^d)\equiv C(\mathbb S^{d})$ the space  consisting of all continuous functions $f$ on $\mathbb S^{d}$ endowed with the norm
$$\|f\|_{\infty}\equiv\|f\|_{L_\infty}:=\sup\limits_{{\bf x}\in\mathbb S^{d}}|f({\bf x})|.$$
In particular,  $L_{2,w}$ is a Hilbert space with the  inner
product
$$\langle f,g\rangle_{L_{2,w}}=\int_{\mathbb S^d}f({\bf x})g({\bf x})w({\bf x}){\rm d}\sigma({\bf x}).$$

  A nonnegative integrable function $w$ on $\mathbb S^{d}$ is called a \emph{doubling weight} if there exists a constant
$L>0$, depending only on $d$ and $w$, such that
$$w(2B)\leq L w(B),\ {\rm\ for\ all\ sperical\ caps}\ B\subset\mathbb S^{d},$$
where $w(E):=\int_{E}w({\bf x}){\rm d}\sigma({\bf x})$ for a measurable subset $E$ of $\mathbb S^{d}$, and the least constant $L$, denoted by $L_w$, is called the doubling constant of $w$.

 A weight function $w$ on $\mathbb S^{d}$ is called an \emph{$A_\infty$ weight} if there exists a constant
$\beta>0$ such that for every spherical cap $B\subset\mathbb S^{d}$ and every measurable $E\subset B$,
$$
\frac{w(B)}{w(E)}\le \beta\,\Big(\frac{ |B|}{ |E|}\Big)^\beta,
$$
where the least constant $\beta$, denoted by $A_\infty(w)$,  is
called the \emph{$A_\infty$ constant} of $w$, and
$|E|:=\int_{E}{\rm d}\sigma({\bf x})$. Obviously, an $A_\infty$
weight must be a doubling weight. In what follows,  we shall use
the symbol $s_w$ to denote a positive number  such that
\begin{equation}\label{2.00}
  \sup\limits_{B\subset\mathbb S^{d}}\frac{w(2^mB)}{w(B)}\le C_{L_w}2^{ms_w},\ m=0,1,2,\dots,
\end{equation}
where $C_{L_w}$ is a constant depending only on the doubling
constant $L_w$. Clearly, such a constant $s_w$  exists (for
example, we can take $s_w=\log L_w/\log2$). Note that if $w=1$,
the least constant $s_w$ for which \eqref{2.00} holds is equal to
$d$.

A typical example of $A_\infty$ weights is  \emph{product weight}
given by
\begin{equation}\label{2.0}
w_{\boldsymbol\kappa}({\bf x})\equiv h_{\boldsymbol\kappa}^2({\bf x}):=\prod_{j=1}^m| \langle{\bf x},{\bf v}_j\rangle|^{2\kappa_j},
\end{equation}
where $\boldsymbol\kappa=(\kappa_1,\dots,\kappa_m)\in[0,+\infty)^m$ and ${\bf v}_j\in\mathbb S^d,j=1,\dots,m$. It was proved in \cite[Example 5.1.5]{DX} that
\[w_{\boldsymbol\kappa}({\rm c}({\bf x},r))\asymp r^d\prod_{j=1}^m\left(| \langle{\bf x},{\bf v}_j\rangle|+r\right)^{2\kappa_j},\ {\bf x}\in\mathbb S^d,\ r\in(0,\pi), \]
and if $E$ is a measurable subset of a spherical cap $B$ in $\mathbb S^d$, then
\begin{equation}\label{Ainfty}
  c_1\frac{|B|}{ |E|}\le \frac{w_{\boldsymbol\kappa}(B)}{ w_{\boldsymbol\kappa}(E)}
  \le c_2\left(\frac{|B|}{ |E|}\right)^\beta,\quad\beta=1+2|\boldsymbol\kappa|:=1+2\sum_{j=1}^m\kappa_j,
\end{equation}for some positive constants $c_1,c_2$ depending only on $d,m$ and $\boldsymbol\kappa$,
 which implies that the product weight $w_{\boldsymbol\kappa}$ is an $A_\infty$ weight on $\mathbb S^d$.

  For a nonzero vector ${\bf v}\in\mathbb{R}^{d+1}$, the reflection $\sigma_{\bf v}$ along ${\bf v}$ is defined by
$$\sigma_{\bf v}{\bf x}:={\bf x}-2\frac{\langle{\bf x},{\bf v}\rangle}{\langle{\bf v},{\bf v}\rangle}{\bf v},\ {\bf x}\in\mathbb{R}^{d+1}.$$
A root system $\mathcal{R}$ is a finite subset of nonzero vectors
in $\mathbb{R}^{d+1}$ such that ${\bf u}\in \mathcal{R}$ implies
$-{\bf u}\in \mathcal{R}$ and ${\bf u},{\bf v}\in \mathcal{R}$
implies $\sigma_{\bf v}{\bf u}\in \mathcal{R}$. For a fixed ${\bf
v}_0\in\mathbb{R}^{d+1}$ such that $\langle {\bf v},{\bf
v}_0\rangle\neq0$ for all ${\bf v}\in \mathcal{R}$, the set of
positive roots $\mathcal{R}_+$ with respect to ${\bf v}_0$ is
defined by $\mathcal{R}_+=\{{\bf v}\in \mathcal{R}:\langle {\bf
v},{\bf v}_0\rangle>0\}$ and
$\mathcal{R}=\mathcal{R}_+\cup(-\mathcal{R}_+)$. A finite
reflection group $G$ with root system $\mathcal{R}$ on
$\mathbb{R}^{d+1}$ is a subgroup of orthogonal group $O(d+1)$
generated by $\{\sigma_{\bf v}:{\bf v}\in \mathcal{R}\}.$ Let
$\kappa_{\bf v}$ be a nonnegative multiplicity function ${\bf
v}\mapsto\kappa_{\bf v}$ defined on $\mathcal{R}_+$ with the
property that $\kappa_{\bf u}=\kappa_{\bf v}$ whenever
$\sigma_{\bf u}$ is conjugate to $\sigma_{\bf v}$ in $G$, that is,
there is a ${\bf w}$ in  $G$ such that $\sigma_{\bf u}{\bf
w}=\sigma_{\bf v}$. Then ${\bf v}\mapsto\kappa_{\bf v}$ is a
$G$-invariant function.

 If $\mathcal{R}:=\{\pm{\bf v}_j\}_{j=1}^m$ forms a root system of a finite reflection group $G$ on $\mathbb R^{d+1}$
 and the product weight $w_{\boldsymbol\kappa}$ given by \eqref{2.0} is invariant under the group $G$,
 then  $w_{\boldsymbol\kappa}$  is called the \emph{Dunkl weight} on $\mathbb S^d$.
For the special Dunkl weight $w_{\boldsymbol\kappa}({\bf
x})=\prod_{j=1}^{d+1}|x_j|^{2\kappa_j}$, it was proved in
\cite[Example 5.1.4]{DX} that the least constant
$s_{w_{\boldsymbol\kappa}}$ for which \eqref{2.00} holds is given
by
\begin{equation}\label{2.000}
s_{w_{\boldsymbol\kappa}}=d+2|\boldsymbol\kappa|-2\min_{1\le j\le d+1}\kappa_j.
\end{equation}

For a nonnegative integer $n$, let $\Pi_n^d$ be the space of all real algebraic polynomials in $d$ variables of degree at most $n$ and $\mathcal{P}_n^{d}$ be the space of real homogeneous algebraic polynomials in $d$ variables of degree $n$. We denote by $\Pi_n(\mathbb{S}^d)$ the space of all real algebraic polynomials in $d+1$ variables of degree at most $n$ restricted to $\mathbb{S}^d$ and $\mathcal{P}_n(\mathbb{S}^d)$ the space of all real homogeneous algebraic polynomials in $d+1$ variables of degree $n$ restricted to $\mathbb{S}^d$.
In this paper, we will need the following weighted Nikolskii's inequality.

\begin{lem}{\rm (\cite[Theorem 5.1]{DX}).} Let $w$ be a doubling weight on $\mathbb S^d$ and $1\le p,q\le\infty$. Then for $P\in\Pi_n(\mathbb S^d)$,
\begin{equation}\label{2.4}
\|P\|_{q,w}\lesssim n^{(\frac1p-\frac1q)_+s_w}\|P\|_{p,w}.
\end{equation}
\end{lem}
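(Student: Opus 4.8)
The plan is to reduce the whole inequality to the single endpoint (sup-norm) estimate
\[
\|P\|_{\infty}\lesssim n^{s_w/p}\,\|P\|_{p,w},\qquad P\in\Pi_n(\mathbb S^d),\ 1\le p<\infty,\qquad(\ast)
\]
and then recover the general case by elementary manipulations. When $q\le p$ the exponent $(\frac1p-\frac1q)_+$ vanishes, and since the total mass $W:=w(\mathbb S^d)=\int_{\mathbb S^d}w\,{\rm d}\sigma$ is a finite positive constant, Hölder's inequality with exponents $p/q\ge1$ gives $\|P\|_{q,w}\le W^{1/q-1/p}\|P\|_{p,w}\lesssim\|P\|_{p,w}$, which is the claim. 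When $p<q<\infty$, I would interpolate: writing $|P|^q=|P|^{q-p}\,|P|^p$ yields $\|P\|_{q,w}^q\le\|P\|_\infty^{q-p}\|P\|_{p,w}^p$, and substituting $(\ast)$ produces precisely $\|P\|_{q,w}\lesssim n^{(\frac1p-\frac1q)s_w}\|P\|_{p,w}$. The remaining case $q=\infty$ is $(\ast)$ itself. Thus the entire lemma hinges on $(\ast)$.

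To prove $(\ast)$ I would combine a Bernstein-type localization with the doubling bound encoded in \eqref{2.00}. Let $\mathbf x_0\in\mathbb S^d$ be a point where $|P(\mathbf x_0)|=\|P\|_\infty$. By the classical Bernstein inequality for spherical polynomials, the surface gradient obeys $\|\,|\nabla_0 P|\,\|_\infty\le C n\|P\|_\infty$; hence, estimating $P$ along the minimizing geodesic issuing from $\mathbf x_0$, for every $\mathbf y\in {\rm c}(\mathbf x_0,\delta_0/n)$ one has $|P(\mathbf x_0)-P(\mathbf y)|\le Cn\|P\|_\infty\cdot\frac{\delta_0}{n}=C\delta_0\|P\|_\infty$. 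Choosing the absolute constant $\delta_0:=1/(2C)$ forces $|P(\mathbf y)|\ge\frac12\|P\|_\infty$ throughout the cap ${\rm c}(\mathbf x_0,\delta_0/n)$.

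With this lower bound in hand, I would bound the $p$-th power of the norm from below on that single cap,
\[
\|P\|_{p,w}^p\ge\int_{{\rm c}(\mathbf x_0,\delta_0/n)}|P|^p\,w\,{\rm d}\sigma\ge\Big(\tfrac12\|P\|_\infty\Big)^p\,w\big({\rm c}(\mathbf x_0,\delta_0/n)\big),
\]
and it then remains only to bound $w({\rm c}(\mathbf x_0,\delta_0/n))$ from below, which is where the doubling exponent $s_w$ enters. Since the geodesic diameter of $\mathbb S^d$ is $\pi$, taking $m:=\lceil\log_2(\pi n/\delta_0)\rceil$ makes $2^m{\rm c}(\mathbf x_0,\delta_0/n)=\mathbb S^d$ while $2^m\lesssim n$; applying \eqref{2.00} to the cap $B={\rm c}(\mathbf x_0,\delta_0/n)$ gives $W=w(2^mB)\le C_{L_w}2^{ms_w}w(B)\lesssim n^{s_w}w(B)$, so that $w({\rm c}(\mathbf x_0,\delta_0/n))\gtrsim n^{-s_w}$. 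Inserting this into the previous display yields $\|P\|_\infty^p\lesssim n^{s_w}\|P\|_{p,w}^p$, which is exactly $(\ast)$.

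The main obstacle is the localization step: the whole argument rests on being able to assert that $|P|$ stays comparable to its maximum on a cap of the critical radius $\sim1/n$, and this requires Bernstein's gradient inequality on the sphere, a standard but genuinely non-elementary spherical polynomial inequality. Once localization is granted, the transition from Lebesgue-scale information to the $w$-weighted measure is handled cleanly by a single application of the doubling bound \eqref{2.00}; this is precisely why the exponent $s_w$, rather than the ambient dimension $d$, governs the growth. I emphasize that the argument uses only the doubling property of $w$, in agreement with the hypothesis of the lemma.
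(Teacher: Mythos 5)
Your proof is correct, and the checkable detail is that the paper itself offers no proof of this lemma: it is imported wholesale as a citation of \cite[Theorem 5.1]{DX}, so the only in-paper ``argument'' is the reference. What you have written is a correct, self-contained proof along the standard lines that underlie the cited result (the Mastroianni--Totik-type argument adapted to the sphere): (1) the reduction is sound --- for $q\le p$ H\"older with the finite mass $w(\mathbb S^d)$ gives the exponent-$0$ case, for $p<q<\infty$ the factorization $\|P\|_{q,w}^q\le\|P\|_\infty^{q-p}\|P\|_{p,w}^p$ converts the endpoint bound into the stated exponent $s_w(\frac1p-\frac1q)$, and $q=\infty$ is the endpoint itself; (2) the endpoint $\|P\|_\infty\lesssim n^{s_w/p}\|P\|_{p,w}$ is proved correctly: the unweighted spherical Bernstein inequality (valid by restricting $P$ to great circles, where it is a trigonometric polynomial of degree $\le n$) forces $|P|\ge\frac12\|P\|_\infty$ on a cap ${\rm c}({\bf x}_0,\delta_0/n)$ around a maximum point, and then \eqref{2.00} applied with $2^m\asymp n$ chosen so that $2^m{\rm c}({\bf x}_0,\delta_0/n)=\mathbb S^d$ yields $w({\rm c}({\bf x}_0,\delta_0/n))\gtrsim n^{-s_w}w(\mathbb S^d)$, which is precisely where the doubling exponent $s_w$ (rather than $d$) enters. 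All implicit constants depend only on $d$ and $w$ (through the Bernstein constant, $C_{L_w}$, $s_w$ and $w(\mathbb S^d)$), which is what the lemma's $\lesssim$ permits. So your proposal does not merely match the paper; it supplies the argument the paper delegates to the literature, at the cost of invoking one external but classical ingredient (unweighted Bernstein on $\mathbb S^d$) in place of the cited weighted machinery of \cite{DX}.
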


\subsection{Weighted Besov classes of generalized smoothness on the sphere.}

Now we give the definition of weighted Besov classes  of generalized smoothness on the  sphere. For the unweighted case, it was first introduced in \cite{WT}.

Let $s\ge1$ be a fixed positive number and let $\Theta$ denote a nonnegative function on $[0,\infty)$.
We say that $\Theta(t)\in\Phi_s^*$
if it satisfies:
\begin{enumerate}
  \item $\Theta(0)=0$ and $\Theta(t)>0$ for all $t>0$;
  \item $\Theta(t)$ is continuous on $[0,\infty)$;
  \item $\Theta(t)$ is almost increasing on $[0,\infty)$, i.e., for any $t_1,t_2$ with $0\le t_1\le t_2$ ,
  $$\Theta(t_1)\le C\Theta(t_2),$$ where $C\ge1$ is a constant independent of $t_1$ and $t_2$;
  \item for any $n\in\mathbb N$ and $t>0$, $$\Theta(nt)\le Cn^s\Theta(t),$$ where $C > 0$ is a constant independent of $n$ and $t$;
  \item there exists $\alpha_1>0$ such that $\Theta(t)/t^{\alpha_1}$ is almost increasing on $[0,\infty)$;
  \item there exists $0<\alpha_2<s$ such that $\Theta(t)/t^{\alpha_2}$ is almost decreasing on $[0,\infty)$, i.e., for any $t_1,t_2$ with $0\le t_1\le t_2$,
  $$\Theta(t_1)/t_1^{\alpha_2}\ge C \Theta(t_2)/t_2^{\alpha_2},$$ where $C>0$ is a constant independent of $t_1$ and $t_2$.
\end{enumerate}


The definition of weighted Besov spaces is based on the \emph{best
approximation} of a function $f\in L_{p,w}$ from $\Pi_n(\mathbb
S^{d})$  defined by
$$E_n(f)_{p,w}:=\inf\limits_{g\in\Pi_n(\mathbb S^{d})}\|f-g\|_{p,w},\ 1\le p\le\infty.$$

Now let $w$ be a doubling weight on $\mathbb{S}^d$ and  $\Theta(t)\in\Phi_s^*$.
For $1\le p\le\infty$ and $0< \gamma\le\infty$, we define  the \emph{weighted Besov spaces of generalized smoothness} $B_{\gamma}^\Theta(L_{p,w})\equiv B_{\gamma}^\Theta(L_{p,w}(\mathbb S^d))$ to be the space of all real
functions $f$ with finite quasi-norm
$$
 \|f\|_{B_{\gamma}^\Theta(L_{p,w})}:=\|f\|_{p,w}+\left\{\sum_{j=0}^\infty\left(\frac{E_{2^j}(f)_{p,w}}{\Theta(2^{-j})}\right)^\gamma\right\}^{1/\gamma},
$$
while the \emph{weighted Besov class}
$BB_{\gamma}^\Theta(L_{p,w})$ is defined to be the unit ball of
the weighted Besov space $B_{\gamma}^\Theta(L_{p,w})$. A prototype
of functions belonging to $\Phi_s^*$ is
$\Theta(t)=t^r(1+(\ln\frac1t)_+)^{-\beta}$, $0<r<s$,
$\beta\in\mathbb R$, in which case,  the space
$B_{\gamma}^\Theta(L_{p,w})$ is the weighted Besov space
$B_{\gamma}^{r,\beta}(L_{p,w})\equiv B_{\gamma}^\Theta(L_{p,w})$
with logarithmic perturbation. Particularly, if $\beta=0$, then
the space $B_{\gamma}^{r,\beta}(L_{p,w})$ coincides with the usual
weighted Besov space $B_{\gamma}^r(L_{p,w})$.

Clearly,  it follows from the definition of $B_{\gamma}^\Theta(L_{p,w})$ that the Jackson inequality
\begin{equation}\label{2.3}
  E_n(f)_{p,w}\lesssim  \Theta(n^{-1})\,\|f\|_{B_{\gamma}^\Theta(L_{p,w})}
\end{equation}holds for $f\in B_{\gamma}^\Theta(L_{p,w})$. Furthermore, we also have the following embedding theorem, which has been proved in \cite[Theorem 2.5]{DW} for the special case of $\Theta(t)=t^r$.

\begin{prop} [Embedding Theorem] \label{thm2.1} Let $\Theta_1(t),\Theta(t):=t^r\Theta_1(t)\in\Phi_s^*$ and let $w$ be a doubling weight on $\mathbb{S}^d$.
 For $1\le p,q\le\infty$ and $0<\gamma\le\infty$, if $r>(1/p-1/q)_+s_w$,
 then\footnote{Here, the notation $X\hookrightarrow Y$ means that the space $X$ is continuously
  embedded into the space $Y$, i.e., $\|f\|_Y\le C\|f\|_X$ for all $f\in X$.}
  $B_\gamma^\Theta(L_{p.w})\hookrightarrow L_{q,w}$. Furthermore, if $r>s_w/p$, then $B_\gamma^\Theta(L_{p.w})\hookrightarrow C(\mathbb S^d)$.
\end{prop}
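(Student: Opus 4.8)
The plan is to treat separately the easy range $q\le p$ and the main range $q>p$, and in the latter to reconstruct $f$ from a telescoping series of near-best polynomial approximants, transferring the $L_{p,w}$-control of the approximation errors into $L_{q,w}$-control by means of the weighted Nikolskii inequality \eqref{2.4}. When $q\le p$ one has $(1/p-1/q)_+=0$, so the hypothesis reads $r>0$; since $w$ is integrable the measure $w\,{\rm d}\sigma$ is finite, whence $L_{p,w}\hookrightarrow L_{q,w}$ by H\"older's inequality, and as $\|f\|_{p,w}\le\|f\|_{B_\gamma^\Theta(L_{p,w})}$ by the very definition of the Besov norm, the embedding $B_\gamma^\Theta(L_{p,w})\hookrightarrow L_{q,w}$ follows at once. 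Thus I may assume $q>p$ (so in particular $p<\infty$) and set $\theta:=(1/p-1/q)\,s_w<r$.

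For $j\ge0$ choose $g_j\in\Pi_{2^j}(\mathbb S^d)$ with $\|f-g_j\|_{p,w}=E_{2^j}(f)_{p,w}$, and put $g_{-1}:=0$. Since $E_{2^j}(f)_{p,w}\to0$ (polynomials being dense in $L_{p,w}$ for $1\le p<\infty$), the telescoping identity $f=\sum_{j\ge0}(g_j-g_{j-1})$ holds in $L_{p,w}$. Each difference $g_j-g_{j-1}$ lies in $\Pi_{2^j}(\mathbb S^d)$, so \eqref{2.4} gives $\|g_j-g_{j-1}\|_{q,w}\lesssim 2^{j\theta}\|g_j-g_{j-1}\|_{p,w}$; bounding $\|g_j-g_{j-1}\|_{p,w}\le E_{2^j}(f)_{p,w}+E_{2^{j-1}}(f)_{p,w}\lesssim E_{2^{j-1}}(f)_{p,w}$ for $j\ge1$ (monotonicity of $E_n$) and $\|g_0\|_{p,w}\lesssim\|f\|_{p,w}$, I obtain, after an index shift and noting that finiteness of the sum forces $L_{q,w}$-convergence to $f$,
\begin{equation*}
\|f\|_{q,w}\le\sum_{j\ge0}\|g_j-g_{j-1}\|_{q,w}\lesssim\|f\|_{p,w}+\sum_{j\ge0}2^{j\theta}E_{2^j}(f)_{p,w}.
\end{equation*}

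It remains to dominate the last series by $\|f\|_{B_\gamma^\Theta(L_{p,w})}$, and this is where the structural hypothesis $\Theta=t^r\Theta_1$ with $\Theta_1\in\Phi_s^*$ enters. Because $\Theta_1$ is almost increasing with $\Theta_1(0)=0$, one has $\Theta_1(2^{-j})\le C\Theta_1(1)$, hence $\Theta(2^{-j})\lesssim 2^{-jr}$; consequently $2^{j\theta}\Theta(2^{-j})\lesssim 2^{-j(r-\theta)}$ decays geometrically since $r>\theta$. Writing $a_j:=E_{2^j}(f)_{p,w}/\Theta(2^{-j})$, so that $\|(a_j)\|_{\ell_\gamma}\le\|f\|_{B_\gamma^\Theta(L_{p,w})}$, I split $2^{j\theta}E_{2^j}(f)_{p,w}=\big(2^{j\theta}\Theta(2^{-j})\big)a_j$ and apply H\"older's inequality in $j$ with exponents $\gamma,\gamma'$: the factor $(2^{j\theta}\Theta(2^{-j}))_j$ lies in $\ell_{\gamma'}$ by the geometric decay just established, so the series is $\lesssim\|(a_j)\|_{\ell_\gamma}\lesssim\|f\|_{B_\gamma^\Theta(L_{p,w})}$ (the cases $\gamma=1$ and $\gamma=\infty$ being handled by the obvious variants using $\sup_j 2^{j\theta}\Theta(2^{-j})<\infty$). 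Combining with the previous display and $\|f\|_{p,w}\le\|f\|_{B_\gamma^\Theta(L_{p,w})}$ yields $\|f\|_{q,w}\lesssim\|f\|_{B_\gamma^\Theta(L_{p,w})}$, i.e. $B_\gamma^\Theta(L_{p,w})\hookrightarrow L_{q,w}$.

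For the final assertion one repeats the telescoping argument with $q=\infty$ and $\theta=s_w/p$: the partial sums $\sum_{j\le N}(g_j-g_{j-1})$ are polynomials, hence continuous, and the same estimate shows they form a Cauchy sequence in $C(\mathbb S^d)$ whenever $r>s_w/p$; their uniform limit is therefore continuous and, since it also equals $f$ in $L_{p,w}$, furnishes a continuous representative of $f$ with $\|f\|_\infty\lesssim\|f\|_{B_\gamma^\Theta(L_{p,w})}$. The main obstacle is precisely the geometric-decay estimate $2^{j\theta}\Theta(2^{-j})\lesssim 2^{-j(r-\theta)}$: it is the factorization $\Theta=t^r\Theta_1$ with $\Theta_1\in\Phi_s^*$ (rather than $\Theta\in\Phi_s^*$ alone, whose guaranteed lower smoothness index could fall below $\theta$) that forces the effective decay rate to exceed $\theta$ and thereby makes all three $\gamma$-summations converge.
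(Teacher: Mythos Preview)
Your proof is correct and follows essentially the same route as the paper: telescope $f$ into dyadic blocks of near-best polynomial approximants, transfer $L_{p,w}$-control to $L_{q,w}$-control via the weighted Nikolskii inequality \eqref{2.4}, and sum using the key observation that $\Theta_1\in\Phi_s^*$ being almost increasing forces $\Theta(2^{-j})\lesssim 2^{-jr}$, whence $2^{j\theta}\Theta(2^{-j})$ decays geometrically when $r>\theta$. The only cosmetic differences are that the paper does not split off the case $q\le p$ (its Nikolskii argument already covers it since $(1/p-1/q)_+=0$), and that the paper invokes the Jackson-type bound \eqref{2.3} directly in place of your H\"older step in $\ell_\gamma$; your version is slightly more refined but lands on the same estimate.
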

\begin{proof}
For $\ell=0,1,\dots$, let $\delta_\ell(f)$ be a best approximiant
of $f\in L_{p,w}$ from $\Pi_\ell(\mathbb S^d)$, i.e.,
$E_\ell(f)_{p,w}=\|f-\delta_\ell(f)\|_{p,w}.$ Note that the best
approximant
 polynomials $\delta_\ell(f)$  always exist  since  $\Pi_\ell(\mathbb S^d)$
are the finite dimensional linear spaces (see, for instance,
\cite[p.17, Theorem 1]{L}). Define $$\sigma_j(f)=\delta_{2^{j-1}}(f)-\delta_{2^{j-2}}(f),\ \
{\rm for}\ j=1,2,\dots,$$where $\delta_{2^{-1}}(f)=0$. Obviously, $\sigma_j(f)\in\Pi_{2^{j-1}}(\mathbb S^{d})$. Then it follows from \eqref{2.3} that
\begin{align}
  \|\sigma_j(f)\|_{p,w}&\le\|f-\delta_{2^{j-1}}(f)\|_{p,w}+\|f-\delta_{2^{j-2}}(f)\|_{p,w}\nonumber
\lesssim E_{2^{j-2}}(f)_{p,w}\\&\lesssim\Theta(2^{-(j-2)})\|f\|_{B_\gamma^\Theta(L_{p,w})}.\label{3.6-0}
\end{align}which, by the weighted Nikolskii's inequality \eqref{2.4}, implies that
\begin{align}
  \|\sigma_j(f)\|_{q,w}&\lesssim2^{(j-1)(\frac1p-\frac1q)_+s_w}\Theta(2^{-(j-2)})\|f\|_{B_\gamma^\Theta(L_{p,w})}.\label{3.6-1}
\end{align}By Lusin's theorem and the Stone-Weierstrass theorem,  the space of spherical polynomials is dense in $L_{p,w}$ for
$1\le p<\infty$ and in $C(\mathbb S^d)$ when $p=\infty$, which
implies that the series $\sum_{j=1}^\infty \sigma_j(f)$ converges
to $f$ in the $L_{p,w}$ norm.

For $1\le q<\infty$, by the trigonometric  inequality we obtain
\begin{align*}
  \|f\|_{q,w}&\le \sum_{j=1}^\infty\|\sigma_j(f)\|_{q,w}
 \\& \lesssim\|f\|_{B_\gamma^\Theta(L_{p,w})}\sum_{j=1}^\infty\left[2^{(j-1)(\frac1p-\frac1q)_+s_w}\Theta(2^{-(j-2)})\right]
 \\& \lesssim\|f\|_{B_\gamma^\Theta(L_{p,w})}\sum_{j=1}^\infty2^{-(j-1)[r-(\frac1p-\frac1q)_+s_w]}\Theta_1(2^{-1})
 \\&\lesssim\|f\|_{B_\gamma^\Theta(L_{p,w})},
\end{align*}where  we used $\Theta_1(2^{-(j-2)})\le C\Theta_1(2^{-1})$ for $j=1,2,\dots$ since $\Theta_1\in\Phi_s^*$.

For $q=\infty$, the above argument shows that the series
$\sum_{j=1}^\infty \sigma_j(f)$ converges
uniformly on $\mathbb S^d$. Hence, in this case, $B_\gamma^\Theta(L_{p,w})\hookrightarrow C(\mathbb S^d)$.

 This completes the proof of Proposition \ref{thm2.1}.
\end{proof}

\subsection{Weighted Sobolev classes on the sphere.}

To give the definition of weighted Sobolev spaces on the  sphere,
we shall need some facts about Dunkl harmonics. Now let
$w_{\boldsymbol\kappa}$ be the Dunkl weight of the form
\eqref{2.0} being invariant under the reflection group $G$. The
essential ingredient of the theory of Dunkl harmonics is a family of
first-order differential-difference operators, called Dunkl
operators, which generates a commutative algebra. These operators
are defined by
$$\mathcal{D}_if({\bf x}):=\partial_if({\bf x})+\sum_{j=1}^m\kappa_j\frac{f({\bf x})-f(\sigma_{{\bf v}_j}{\bf x})}{\langle{\bf x},{\bf v}_j\rangle}\langle{\bf v}_j,{\bf e}_i\rangle,\ i=1,\dots,d+1,$$
where $\partial_i\equiv\partial/\partial{x_i}$ denotes the partial derivative with respect to $x_i$, and
 $${\bf e}_1=(1,0,\dots,0),\ {\bf e}_2=(0,1,\dots,0),\dots,\ {\bf e}_{d+1}=(0,0,\dots,1)$$ denote the orthonormal basis
for $\mathbb{R}^{d+1}$. Then the Dunkl-Laplace operator
$\Delta_{\boldsymbol\kappa}$ is defined by
$$\Delta_{\boldsymbol\kappa}:=\mathcal{D}_1^2+\dots+\mathcal{D}_{d+1}^2,$$which plays the role similar to that of the ordinary
Laplacian. It was proved in \cite{DX} that
$$\Delta_{\boldsymbol\kappa}=\frac{\Delta(fw_{\boldsymbol\kappa})-f\Delta(w_{\boldsymbol\kappa})}{w_{\boldsymbol\kappa}}-2\sum_{j=1}^m\kappa_j\frac{f({\bf x})-f(\sigma_{{\bf v}_j}{\bf x})}{\langle{\bf x},{\bf v}_j\rangle^2}.$$
 In particular, if $\mathcal{R}_+=\{{\bf e}_j\}_{j=1}^{d+1}$, $G=\Bbb Z_2^{d+1}$, then
$$\Delta_{\boldsymbol\kappa}f=\Delta f+\sum_{j=1}^{d+1}\kappa_j\left(\frac2{x_j}\partial_jf-\frac{f(x_1,\dots,x_j,\dots,x_{d+1})-f(x_1,\dots,-x_j,\dots,x_{d+1})}{x_j^2}\right).$$

A Dunkl-harmonic polynomial $Y$ is a homogeneous
polynomial $Y$ such that $\Delta_{\boldsymbol\kappa}Y=0$. Furthermore, let
$$\mathcal{H}_n^d(w_{\boldsymbol\kappa}):=\{P\in\mathcal{P}_n^{d+1}:\,\Delta_{\boldsymbol\kappa} P=0\}$$
be the space of the Dunkl harmonics of degree $n$. Then for $n\neq m$,
$$\langle P,Q\rangle_{L_{2,w_{\boldsymbol\kappa}}}=0,\ \ P\in\mathcal{H}_n^d(w_{\boldsymbol\kappa}),\ Q\in \mathcal{H}_m^d(w_{\boldsymbol\kappa}).$$ Throughout this paper, we fix
the value of $\lambda_{\boldsymbol\kappa}$ as
$$\lambda_{\boldsymbol\kappa}:=|\boldsymbol\kappa|+\frac{d-1}2.$$
In terms of the polar coordinates ${\bf y}=r{\bf y}', r=\|{\bf y}\|$, the Dunkl-Laplacian operator $\Delta_{\boldsymbol\kappa}$
takes the form $$\Delta_{\boldsymbol\kappa}:=\frac{\partial^2}{\partial r^2}+\frac{2\lambda_{\boldsymbol\kappa}+1}r\frac{\partial}{\partial r}+\frac1{r^2}\Delta_{{\boldsymbol\kappa},0},$$
where $\Delta_{{\boldsymbol\kappa},0}$ is the Dunkl-Laplace-Beltrami operator on the sphere. Hence, applying $\Delta_{\boldsymbol\kappa}$
to Dunkl harmonics $Y\in\mathcal{H}_n^d(w_{\boldsymbol\kappa})$ with $Y({\bf y})=r^nY({\bf y}')$ shows that spherical Dunkl harmonics
are eigenfunctions of $\Delta_{{\boldsymbol\kappa},0}$; that is,
$$
  \Delta_{{\boldsymbol\kappa},0}Y({\bf x})=-n(n+2\lambda_{\boldsymbol\kappa})Y({\bf x}),\ {\bf x}\in\mathbb{S}^d,\ Y\in \mathcal{H}_n^d(w_{\boldsymbol\kappa}).
$$
The density of polynomial spaces shows that
$$L_{2,w_{\boldsymbol\kappa}}=\bigoplus\limits_{n=0}^\infty\,\mathcal{H}_n^d(w_{\boldsymbol\kappa})\ \ {\rm and}\ \ \Pi_n(\mathbb S^d)=\bigoplus\limits_{k=0}^n\,\mathcal{H}_k^d(w_{\boldsymbol\kappa}).$$
Let
$\{Y_{n,k}:\, k=1,\dots, N(d,n)\}$
be a collection of $L_2$ orthonormal real basis of $\mathcal{H}_n^d(w_{\boldsymbol\kappa})$ (see, e.g., \cite[Chapter 6]{DX}).
Then the orthogonal projector ${\rm proj}_n^{\boldsymbol\kappa} :L_{2,w_{\boldsymbol\kappa}}\rightarrow \mathcal{H}_n^d(w_{\boldsymbol\kappa})$ given by
$${\rm proj}_n^{\boldsymbol\kappa}(f)={\sum_{k=1}^{N(d,n)} \langle f,Y_{n,k}\rangle_{L_{2,w_{\boldsymbol\kappa}}} Y_{n,k}}$$
can be written
as$$
     {\rm proj}_n^{\boldsymbol\kappa}(f,{\bf x})=\int_{\mathbb{S}^d}f({\bf y})P_n(w_{\boldsymbol\kappa};{\bf x},{\bf y})w_{\boldsymbol\kappa}({\bf y}){\rm d}\sigma({\bf y}),
   $$where $P_n(w_{\boldsymbol\kappa};{\bf x},{\bf y})$ is the reproducing kernel of the space of Dunkl harmonics $\mathcal{H}_n^d(w_{\boldsymbol\kappa})$.
 Moreover, for $f\in L_{2,w_{\boldsymbol\kappa}}$,
 $$f({\bf x})=\sum_{n=0}^\infty{\rm proj}_n^{\boldsymbol\kappa}(f,{\bf x}),$$in the $L_{2,w_{\boldsymbol\kappa}}$ norm.

Given $r>0$ and $1\le p\le\infty$, we define the \emph{weighted
Sobolev space} $W_{p,w_{\boldsymbol\kappa}}^r\equiv W_{p,w_{\boldsymbol\kappa}}^r(\mathbb{S}^{d})$ to be the space of all real
functions $f$ with finite norm
$$\|f\|_{W_{p,w_{\boldsymbol\kappa}}^r}:=\|f\|_{p,w_{\boldsymbol\kappa}}+\|(-\Delta_{{\boldsymbol\kappa},0})^{r/2}f\|_{p,w_{\boldsymbol\kappa}},$$
 where the operator $(-\Delta_{{\boldsymbol\kappa},0})^{r/2}$ is given by
$$(-\Delta_{{\boldsymbol\kappa},0})^{r/2}f({\bf x})\ \sim\ \sum_{n=0}^\infty\left(n(n+2\lambda_{\boldsymbol\kappa})\right)^{r/2}{\rm proj}_n^{\boldsymbol\kappa}\left(f,{\bf x}\right),$$
in the distribution sense, while the \emph{weighted Sobolev class}
$BW_{p,w_{\boldsymbol\kappa}}^r$ is defined to be the unit ball of
the weighted Sobolev space $W_{p,w_{\boldsymbol\kappa}}^r$. When
$w_{\boldsymbol\kappa}=1$, $W_{p,w_{\boldsymbol\kappa}}^r$ recedes
to the classical unweighted Sobolev space $W_p^r$. In this case,
we write $W_p^r\equiv W_{p,w_{\boldsymbol\kappa}}^r$ and
$\|\cdot\|_{W_p^r}\equiv
\|\cdot\|_{W_{p,w_{\boldsymbol\kappa}}^r}$ for simplicity.
Moreover, we have the following results:
\begin{itemize}
  \item Bernstein's inequality (\cite[(3.4)]{X05}): for $r>0$, $1\le p\le\infty$, and $P\in\Pi_n(\mathbb S^d)$,
\begin{equation}\label{2.1}
\|(-\Delta_{{\boldsymbol\kappa},0})^{r/2}P\|_{p,w_{\boldsymbol\kappa}}\lesssim n^r\|P\|_{p,w_{\boldsymbol\kappa}};
\end{equation}
  \item Jackson's inequality (\cite[Theorem 3.3]{X05}): for $r>0$, $1\le p\le\infty$, and $f\in W_{p,w_{\boldsymbol\kappa}}^r$,
  \begin{equation}\label{2.2}
  E_n(f)_{p,w_{\boldsymbol\kappa}}\lesssim n^{-r} \|f\|_{W_{p,w_{\boldsymbol\kappa}}^r}.
\end{equation}

 By   Jackson's inequality we obtain
 \begin{equation}\label{2.8-0}
 \|f\|_{B_\infty^r(L_{p,w_{\boldsymbol\kappa}})}:=\|f\|_{p,w_{\boldsymbol\kappa}}+\sup_{j\ge0}2^{jr}E_{2^j}(f)_{p,w_{\boldsymbol\kappa}}\lesssim  \|f\|_{W_{p,w_{\boldsymbol\kappa}}^r},
\end{equation}which means that the weighted Sobolev space
$W_{p,w_{\boldsymbol\kappa}}^r$ can be continuously embedded into
the weighted Besov space
$B_\infty^r(L_{p,w_{\boldsymbol\kappa}})$;

 \item Embedding Theorem: if
$r>(1/p-1/q)_+s_{w_{\boldsymbol\kappa}}$, then
$W_{p,w_{\boldsymbol\kappa}}^r\hookrightarrow
L_{q,w_{\boldsymbol\kappa}}$. Furthermore, if
$r>s_{w_{\boldsymbol\kappa}}/p$, then
$W_{p,w_{\boldsymbol\kappa}}^r\hookrightarrow C(\mathbb S^d)$.
Indeed, this  can be derived from \eqref{2.8-0} and Proposition
\ref{thm2.1}.
\end{itemize}

\section{Proofs of main results}\label{sect3}

In this section we are going to discuss the numerical integration
\eqref{1.1} on the unit sphere. The asymptotic orders of the
weighted Sobolev classes  and the weighted Besov classes in the
deterministic and randomized case settings will be determined. In
the deterministic case setting, for the upper estimates, we use a
positive quadrature formula; for the lower estimates, we will
employ the identity presented in \eqref{1}. In the randomized case
setting, for the upper estimates, we use a constructive polynomial
approximation based on the points that satisfying weighted $L_p$
Marcinkiewicz-Zygmund inequality and the standard Monte-Carlo
methods; for the lower estimates, we will implement Novak's
classical technique involving fooling functions.

\subsection{Deterministic case.}
This subsection is devoted to proving the estimates of deterministic case errors.
More precisely,
let $1\le p\le\infty$ and $0<\gamma\le\infty$. The following results will be  established.
\begin{itemize}
  \item [(i)] If $w_{\boldsymbol\kappa}$ is a Dunkl weight
with the critical index $s_{w_{\boldsymbol\kappa}}$
 on $\mathbb S^d$,
then for $r>s_{w_{\boldsymbol\kappa}}/p$,
\begin{equation}\label{3.1}
e_n^{\rm det}\left({\rm Int}_{\mathbb S^{d},w_{\boldsymbol\kappa}}; BW_{p,w_{\boldsymbol\kappa}}^r\right)\asymp n^{-\frac r{d}}.
\end{equation}
  \item [(ii)] If $w$ is an $A_\infty$ weight
with the critical index $s_w$
 on $\mathbb S^d$, and  $\Theta_1(t)$, $\Theta(t)=t^r\Theta_1(t)\in\Phi_s^*$, then for $r>s_w/p$,
 \begin{equation}\label{3.2}
e_n^{\rm det}\left({\rm Int}_{\mathbb S^d,w};
BB_{\gamma}^\Theta(L_{p,w})\right)\asymp
\Theta\big(n^{-\frac1{d}}\big).
\end{equation}
\end{itemize}

\subsubsection{Upper estimates.}

First we begin to address the proof of the upper estimates. As mentioned in Remark \ref{rem1}, it remains to prove \eqref{3.2}. This can be straightforwardly verified through the following proposition, which is based on the positive cubature formulas.
\begin{prop}\label{prop}Let $1\le p\le\infty$, $0<\gamma\le\infty$, $\Theta_1(t)$, $\Theta(t)=t^r\Theta_1(t)\in\Phi_s^*$, and let $w$ be a doubling weight with the critical index $s_w$ on $\mathbb S^d$. Suppose
that the following positive cubature formula holds for all $f\in\Pi_N(\mathbb S^d)$,
\begin{equation}\label{2}
  \int_{\mathbb S^d}f({\bf x})w({\bf x}){\rm d}\sigma({\bf x})=\sum_{i=1}^n\lambda_i f({\boldsymbol\xi}_i)=:Q_n(f),\ \min_{1\le i\le n}\lambda_i>0,
\end{equation}with $\Lambda:=\{\boldsymbol\xi_i\}_{i=1}^n$ being a finite subset of $\mathbb S^d$ satisfying $N^d\le n\le C_1N^d$. Then, for $r>s_w/p$, we have
\begin{equation}\label{3}
\sup_{f\in BB_\gamma^\Theta(L_{p,w})}\left| {\rm Int}_{\mathbb S^d,w}(f)-Q_n(f)\right|\le C_2\Theta\big(n^{-1/d}\big),
\end{equation}
where the constant $C_2>0$ is independent of $f$ and $n$.
\end{prop}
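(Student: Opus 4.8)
The plan is to subtract from $f$ its polynomial part up to degree $N$ and exploit that $Q_n$ integrates $\Pi_N(\mathbb S^d)$ exactly, so that only a high-frequency tail contributes to the error; each tail block is then estimated separately, in the integral by Hölder's inequality and in the cubature by a Marcinkiewicz--Zygmund inequality. First, since $N^d\le n\le C_1N^d$ forces $N\asymp n^{1/d}$, the almost-monotonicity of $\Theta$ together with the growth bound $\Theta(mt)\le Cm^s\Theta(t)$ defining $\Phi_s^*$ yields $\Theta(N^{-1})\asymp\Theta(n^{-1/d})$; hence it suffices to bound the left-hand side of \eqref{3} by a constant multiple of $\Theta(N^{-1})$. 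I reuse the dyadic decomposition from the proof of Proposition \ref{thm2.1}: writing $\sigma_j(f)=\delta_{2^{j-1}}(f)-\delta_{2^{j-2}}(f)\in\Pi_{2^{j-1}}(\mathbb S^d)$, we have $f=\sum_{j\ge1}\sigma_j(f)$ (convergence in $L_{p,w}$, and uniform since $r>s_w/p$, by Proposition \ref{thm2.1}), and, by \eqref{3.6-0} together with property (4) of $\Phi_s^*$, $\|\sigma_j(f)\|_{p,w}\lesssim\Theta(2^{-j})$ for all $f\in BB_\gamma^\Theta(L_{p,w})$.

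Let $J$ be the largest integer with $2^{J-1}\le N$, so that $2^J\asymp N$. For $j\le J$ we have $\sigma_j(f)\in\Pi_N(\mathbb S^d)$, and exactness of the cubature gives ${\rm Int}_{\mathbb S^d,w}(\sigma_j(f))=Q_n(\sigma_j(f))$. Since $Q_n$ is a finite sum and the series converges uniformly, this leaves
\[
{\rm Int}_{\mathbb S^d,w}(f)-Q_n(f)=\sum_{j>J}\Big[{\rm Int}_{\mathbb S^d,w}(\sigma_j(f))-Q_n(\sigma_j(f))\Big].
\]
For the integral blocks, Hölder's inequality for the finite measure $w\,{\rm d}\sigma$ gives $|{\rm Int}_{\mathbb S^d,w}(\sigma_j(f))|\le\|\sigma_j(f)\|_{1,w}\lesssim\|\sigma_j(f)\|_{p,w}\lesssim\Theta(2^{-j})$; summing the geometrically decaying tail (property (5): $\Theta(t)/t^{\alpha_1}$ almost increasing) yields $\sum_{j>J}|{\rm Int}_{\mathbb S^d,w}(\sigma_j(f))|\lesssim\Theta(2^{-J})\asymp\Theta(N^{-1})$.

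The cubature blocks are the crux. Using $\lambda_i>0$ and $\sum_{i}\lambda_i=Q_n(1)={\rm Int}_{\mathbb S^d,w}(1)=w(\mathbb S^d)$ (a constant, as $1\in\Pi_N$), Hölder's inequality for the discrete sum gives
\[
|Q_n(\sigma_j(f))|\le\sum_{i=1}^n\lambda_i|\sigma_j(f)(\boldsymbol\xi_i)|\lesssim\Big(\sum_{i=1}^n\lambda_i|\sigma_j(f)(\boldsymbol\xi_i)|^p\Big)^{1/p}.
\]
The remaining discrete $\ell_p$-norm is controlled by the Marcinkiewicz--Zygmund inequality attached to the positive cubature: for $P\in\Pi_M(\mathbb S^d)$ with $M\ge N$ one has $\big(\sum_i\lambda_i|P(\boldsymbol\xi_i)|^p\big)^{1/p}\lesssim(M/N)^{s_w/p}\|P\|_{p,w}$. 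Applying this with $P=\sigma_j(f)$ and $M=2^{j-1}$ gives $|Q_n(\sigma_j(f))|\lesssim(2^{j-1}/N)^{s_w/p}\Theta(2^{-j})$. Summing over $j>J$,
\[
\sum_{j>J}|Q_n(\sigma_j(f))|\lesssim N^{-s_w/p}\sum_{j>J}2^{(j-1)s_w/p}\Theta(2^{-j})\lesssim\Theta(N^{-1}),
\]
where the last step uses $r>s_w/p$ and the $\Phi_s^*$ structure of $\Theta$ (so that $2^{js_w/p}\Theta(2^{-j})$ decays geometrically and is summed by its leading term $\asymp N^{s_w/p}\Theta(N^{-1})$). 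Combining the two tail bounds proves \eqref{3}.

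The main obstacle is the Marcinkiewicz--Zygmund upper bound with the explicit degree-ratio factor $(M/N)^{s_w/p}$ for polynomials of degree $M$ exceeding the exactness degree $N$: this is precisely what forces the dyadic decomposition, since the crude alternative of bounding the cubature error by $\|f-\delta_N(f)\|_\infty$ (via the $L_p\to L_\infty$ Nikolskii inequality \eqref{2.4}) would lose a factor $N^{s_w/p}$ and spoil the sharp order. Establishing this inequality rests on the positive-cubature weights being comparable to the weighted cap measures $w({\rm c}(\boldsymbol\xi_i,1/N))$, together with a local (Nikolskii-type) mean-value estimate for polynomials and the bounded overlap of the caps ${\rm c}(\boldsymbol\xi_i,1/M)$, all of which follow from the doubling growth \eqref{2.00} (and \eqref{Ainfty} in the product-weight case); the remaining summation bookkeeping is routine given the $\Phi_s^*$ structure of $\Theta$.
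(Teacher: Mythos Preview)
Your proof is correct and follows essentially the same route as the paper: the same dyadic decomposition $f=\delta_{2^{J-1}}(f)+\sum_{j>J}\sigma_j(f)$, the same splitting of the error into the integral part (handled by H\"older) and the discrete cubature part (handled by the Marcinkiewicz--Zygmund bound $(\sum_i\lambda_i|P(\boldsymbol\xi_i)|^p)^{1/p}\lesssim(M/N)^{s_w/p}\|P\|_{p,w}$ for $P\in\Pi_M$), and the same tail summation via the $\Phi_s^*$ properties of $\Theta$.

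One point deserves correction. Your final paragraph justifies the MZ inequality by asserting that the cubature weights $\lambda_i$ are comparable to the cap measures $w({\rm c}(\boldsymbol\xi_i,1/N))$ and that the nodes have bounded overlap. For an \emph{arbitrary} positive cubature formula satisfying \eqref{2} this need not hold: the nodes $\boldsymbol\xi_i$ are not assumed to be separated, and the weights $\lambda_i$ are not assumed to be of any particular size. The paper avoids this by invoking Lemma~\ref{lem3.2} (from \cite{W1,DX}), whose hypothesis \eqref{3.10} is verified directly from the quadrature identity \eqref{2}: since $P^2\in\Pi_N(\mathbb S^d)$ for $P\in\Pi_{\lfloor N/2\rfloor}(\mathbb S^d)$, one gets $\sum_i\lambda_i|P(\boldsymbol\xi_i)|^2=\int_{\mathbb S^d}|P|^2w\,{\rm d}\sigma$, i.e.\ \eqref{3.10} with $p_0=2$, $C_0=1$, and degree $\lfloor N/2\rfloor$. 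Lemma~\ref{lem3.2} then gives the MZ bound for all $p$ and all degrees $M\ge N/2$, with the factor $(M/N)^{s_w}$, without any geometric assumption on the node set. Replace your last paragraph's justification by this reference and the argument is complete.
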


The proof is based on the following lemma.

\begin{lem}{\rm(\cite[Theorem 3.3]{W1} and \cite[Lemma 5.4.5]{DX}.)}\label{lem3.2}
Let $w$ be a doubling weight on $\mathbb S^d$, and let $\mu$ be a nonnegative measure on $\mathbb S^{d}$ satisfying
\begin{equation}\label{3.10}\int_{\mathbb S^{d}}|f({\bf x} )|^{p_0}{\rm d}\mu({\bf x})\le
C_0\int_{\mathbb S^{d}}|f({\bf x})|^{p_0}w({\bf x}){\rm d}\sigma({\bf x}),\ { for\ all}\ f\in \Pi _N(\mathbb S^{d}),\end{equation}for some $1\le p_0<\infty$. Then for $1\le p<\infty$ and $M\geq N$, we
have
$$\int_{\mathbb S^{d}}|f({\bf x} )|^{p}{\rm d}\mu({\bf x})\le
C_0C\left(\frac MN\right)^{s_w}\int_{\mathbb S^{d}}|f({\bf x})|^{p}w({\bf x}){\rm d}\sigma({\bf x}),\ { for\ all}\ f\in \Pi _M(\mathbb S^{d}),$$where
$C>0$ depends only on $d$, $p$, and $w$.
\end{lem}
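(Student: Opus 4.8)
The plan is to prove the inequality by partitioning $\mathbb S^d$ into small spherical caps on which a degree-$M$ polynomial is, after rescaling, essentially a polynomial of degree $\sim M/N$, and then to transfer the hypothesis \eqref{3.10} from the level of $L^{p_0}$-norms of degree-$N$ polynomials to the level of the $\mu$-masses of these caps. Fix $f\in\Pi_M(\mathbb S^d)$ with $M\ge N$. Let $\{\boldsymbol\xi_j\}_j$ be a maximal $(1/N)$-separated subset of $\mathbb S^d$, and set $B_j:={\rm c}(\boldsymbol\xi_j,c_0/N)$ and $B_j^*:={\rm c}(\boldsymbol\xi_j,c_1/N)$ with constants $c_1>c_0\ge1$ chosen so that $\{B_j\}_j$ covers $\mathbb S^d$ and $\{B_j^*\}_j$ has overlap bounded by a constant depending only on $d$. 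Then, writing $\mu(E)$ and $w(E)$ for the $\mu$- and $(w\,{\rm d}\sigma)$-masses of a set $E$, the covering property gives
$$\int_{\mathbb S^d}|f|^p\,{\rm d}\mu\le\sum_j\mu(B_j)\,\max_{{\bf x}\in B_j}|f({\bf x})|^p,$$
and the proof reduces to bounding $\mu(B_j)$ from above by $w(B_j)$ and $\max_{B_j}|f|^p$ by a weighted average of $|f|^p$ over $B_j^*$.

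For the first bound I would use the hypothesis \eqref{3.10}. For each $j$ I would choose a localized polynomial $T_j\in\Pi_N(\mathbb S^d)$ with $|T_j|\ge1$ on $B_j$ and decay $|T_j({\bf x})|\lesssim(1+N\,d_{\mathbb S}(\boldsymbol\xi_j,{\bf x}))^{-\ell}$ for a large exponent $\ell$; such polynomials, obtained by smoothly truncating the reproducing kernel at scale $1/N$, are standard. Summing the decay over the dyadic annuli $\{{\bf x}:2^{k-1}/N<d_{\mathbb S}(\boldsymbol\xi_j,{\bf x})\le2^k/N\}$ and using the doubling bound \eqref{2.00} yields $\int_{\mathbb S^d}|T_j|^{p_0}w\,{\rm d}\sigma\lesssim w(B_j)$ once $\ell$ is large enough relative to $s_w$ and $p_0$. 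Since $|T_j|\ge1$ on $B_j$, the hypothesis \eqref{3.10} then gives
$$\mu(B_j)\le\int_{\mathbb S^d}|T_j|^{p_0}\,{\rm d}\mu\le C_0\int_{\mathbb S^d}|T_j|^{p_0}w\,{\rm d}\sigma\le C_0C\,w(B_j),$$
with $C$ depending only on $d,p_0,w$. I emphasize that this is the only step that uses the exponent $p_0$; the target exponent $p$ enters solely through $f$ in the next step.

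The second bound is the heart of the argument and, I expect, the main obstacle: a local Nikolskii (Remez-type) inequality for doubling weights asserting that, for $f\in\Pi_M$ and $M\ge N$,
$$\max_{{\bf x}\in B_j}|f({\bf x})|^p\lesssim\Big(\frac MN\Big)^{s_w}\frac1{w(B_j^*)}\int_{B_j^*}|f({\bf x})|^pw({\bf x})\,{\rm d}\sigma({\bf x}).$$
Heuristically, on a cap of radius $\sim1/N$ the polynomial $f\in\Pi_M$ behaves like a polynomial of degree $\sim M/N$, so this is the local counterpart of the global Nikolskii inequality \eqref{2.4} with $q=\infty$, and the factor $(M/N)^{s_w}$ is precisely the resulting loss; making it rigorous for an arbitrary doubling weight is the delicate point, and is where I would invoke the polynomial inequalities for doubling weights underlying \eqref{2.4}. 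Granting this, I would combine the two bounds, use $w(B_j)\le w(B_j^*)$ and the bounded overlap of $\{B_j^*\}_j$, to obtain
$$\int_{\mathbb S^d}|f|^p\,{\rm d}\mu\lesssim C_0\Big(\frac MN\Big)^{s_w}\sum_j\int_{B_j^*}|f|^pw\,{\rm d}\sigma\lesssim C_0\,C\Big(\frac MN\Big)^{s_w}\int_{\mathbb S^d}|f|^pw\,{\rm d}\sigma,$$
which is exactly the claimed inequality.
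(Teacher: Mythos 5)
Your plan has a genuine gap, and it sits exactly where you flagged it: the ``local Nikolskii'' inequality
$$\max_{{\bf x}\in B_j}|f({\bf x})|^p\lesssim\Big(\frac MN\Big)^{s_w}\frac1{w(B_j^*)}\int_{B_j^*}|f|^pw\,{\rm d}\sigma,\qquad f\in\Pi_M(\mathbb S^d),$$
is not just delicate but false: a degree-$M$ polynomial can dip through the cap $B_j^*$ of radius $c_1/N\gg1/M$ while being enormous outside it, so its mass on $B_j^*$ cannot control its max there. Concretely, take $d=1$, $w\equiv1$ (so $s_w=1$), let $\ell$ be the affine map of $[\cos(c_1/N),\cos(\delta/M)]$ onto $[-1,1]$, and set $f(t)=T_M(\ell(\cos t))/T_M(\ell(1))\in\Pi_M(\mathbb S^1)$, $T_M$ the Chebyshev polynomial and $B_j$ centered at $t=0$. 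Then $f(0)=1$ and $|f|\le1$ for $|t|\le\delta/M$, while for $\delta/M\le|t|\le c_1/N$ one has $|f(t)|\le 2e^{-2\delta N/c_1}$, because $\ell(1)-1\approx 2\delta^2N^2/(c_1^2M^2)$ and $T_M(1+\varepsilon)\gtrsim e^{M\sqrt{2\varepsilon}}$. Hence
$$\Big(\frac MN\Big)\frac1{|B_j^*|}\int_{B_j^*}|f|^p\,{\rm d}\sigma\lesssim \delta+\frac MN\,e^{-2p\delta N/c_1},$$
and choosing $\delta$ a small constant, $N$ large and $M=N^2$ makes the right-hand side smaller than any prescribed $1/C$, while $\max_{B_j}|f|^p\ge f(0)^p=1$; the zonal polynomial $f({\bf x})=T_M(\ell(\langle{\bf x},{\bf e}\rangle))/T_M(\ell(1))$ gives the same contradiction on $\mathbb S^d$ for every $d$. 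The moral is that Nikolskii-type bounds cannot be localized to caps of radius $1/N\gg1/M$; the factor $(M/N)^{s_w}$ cannot be produced cap by cap this way. (Your first step, $\mu(B_j)\lesssim C_0\,w(B_j)$ via needle polynomials of degree $N$, is correct.)

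For comparison: the paper does not prove this lemma but quotes it, and the proof in the cited references works at the matched scale $1/M$ and keeps the polynomial inequality global, which is precisely what repairs your argument. Take a maximal $(\delta/M)$-separated set $\{\boldsymbol\omega_k\}$, so that $\int|f|^p{\rm d}\mu\le\sum_k\mu({\rm c}(\boldsymbol\omega_k,\delta/M))\max_{{\rm c}(\boldsymbol\omega_k,\delta/M)}|f|^p$. Then
$$\mu({\rm c}(\boldsymbol\omega_k,\delta/M))\le\mu({\rm c}(\boldsymbol\omega_k,1/N))\lesssim C_0\,w({\rm c}(\boldsymbol\omega_k,1/N))\lesssim C_0\Big(\frac MN\Big)^{s_w}w({\rm c}(\boldsymbol\omega_k,\delta/M)),$$
where the middle step is your needle-polynomial argument at scale $1/N$ (the only place the hypothesis \eqref{3.10} and the exponent $p_0$ enter) and the last step is the doubling bound \eqref{2.00}; this is where $(M/N)^{s_w}$ really comes from. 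Finally one invokes the Marcinkiewicz--Zygmund-type maximal inequality for doubling weights at the natural scale,
$$\sum_k w({\rm c}(\boldsymbol\omega_k,\delta/M))\max_{{\bf x}\in{\rm c}(\boldsymbol\omega_k,\delta/M)}|f({\bf x})|^p\le C\int_{\mathbb S^d}|f|^pw\,{\rm d}\sigma,\qquad f\in\Pi_M(\mathbb S^d),$$
which is a genuinely global statement, proved with localized kernels and maximal functions (cf. \cite[Chapter 5]{DX}), and is exactly the ingredient your cap-by-cap inequality tried, and fails, to substitute for.
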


Now we are in a position to give the proof of Proposition \ref{prop}.

\begin{proof}[Proof of Proposition \ref{prop}]\

For a fixed positive integer $n$, it is  easy to see that there exists a nonnegative integer $m$ such that
$2^{m-1}\le N<2^m$ and $n\asymp \dim\Pi_N^d\asymp N^d$. We keep the notations $\sigma_j$ and $\delta_k$ in the proof of Proposition \ref{thm2.1}.

According to Theorem \ref{thm2.1} we know that if $f\in B_\gamma^\Theta(L_{p,w})$
and $r>s_w/p$,
then the series $f=\delta_{2^{m-1}}(f)+\sum_{j=m+1}^{\infty}\sigma_j(f)$ converges uniformly on $\mathbb S^d$. Thus, using the
H\"older inequality, we have
\begin{align*}
I_n(f,\Lambda,w)&:=\left| {\rm Int}_{\mathbb S^d,w}(f)-Q_n(f)\right|
\\&=\left|\sum_{j=m+1}^{\infty}\left(\int_{\mathbb S^d}\sigma_j(f)({\bf x})w({\bf x}){\rm d}\sigma({\bf x})-\sum_{i=1}^n\lambda_i \sigma_j(f)({\boldsymbol\xi}_i)\right)\right|\\
&\lesssim\sum_{j=m+1}^{\infty}\left\{\|\sigma_j(f)\|_{p,w}+ \left(\sum_{i=1}^n\lambda_i| \sigma_j(f)({\boldsymbol\xi}_i)|^p\right)^{1/{p}}\right\},
\end{align*}Clearly, we derive from \eqref{2} that \eqref{3.10} is
true for the discrete measure $\mu=\sum_{i=1}^n\lambda_i\delta_{\boldsymbol\xi_i}$ with $p_0=2$ and $C_0=1$. Notice
that $\sigma_j(f)\in\Pi_{2^{j-1}}(\mathbb S^{d})$, and $2^{j-1}\ge N$ for $j\ge m+1$. Then it follows from Lemma \ref{lem3.2} and \eqref{3.6-0} that
\begin{align*}
  \left(\sum_{i=1}^n\lambda_i| \sigma_j(f)({\boldsymbol\xi}_i)|^p\right)^{1/{p}}&\lesssim \left(\frac{2^{j-1}}N\right)^{s_w/p}\|\sigma_j(f)\|_{p,w}
  \\&\lesssim 2^{(j-1)s_w/p}N^{-s_w/p}\Theta(2^{-(j-2)})\|f\|_{B_\gamma^\Theta(L_{p,w})}.
\end{align*}
Thus, we obtain,
\begin{align*}
 I_n(f,\Lambda,w)
&\lesssim N^{-\frac{s_w}p}\sum_{j=m+1}^{\infty}2^{(j-1)s_w/p}\Theta\big(2^{-(j-2)}\big)\|f\|_{B_\gamma^\Theta(L_{p,w})}.
\end{align*}Notice that
\begin{align*}
\sum_{j=m+1}^{\infty}2^{(j-1)s_w/p}\Theta\big(2^{-(j-2)}\big)
&= \sum^{\infty}_{j=m+1}2^{-(j-1)(r-\frac {s_w}p)}\Theta_1\big(2^{-(j-2)}\big)
\\&\lesssim \Theta_1\big(2^{-(m-1)}\big)\sum^{\infty}_{j=m+1}2^{-(j-1)(r-\frac {s_w}p)}
\\&\asymp 2^{-(m-1)(r-\frac {s_w}p)}\Theta_1\big(2^{-(m-1)}\big)
\\&\asymp N^{\frac{s_w}p}\Theta\big(n^{-1/d}\big),
\end{align*}
where in the last  equivalence we used
\begin{equation}\label{3.13-1}
\Theta_1\left(2^{-(m-1)}\right)\asymp\Theta_1\left(n^{-1/d}\right),
\end{equation}
since $\Theta_1\in\Phi_s^*$ and $2^{m-1}\asymp N\asymp n^{-1/d}$.
Therefore, we get
\begin{align*}
 I_n(f,\Lambda,w)
&\lesssim\Theta\big(n^{-1/d}\big)\|f\|_{B_\gamma^\Theta(L_{p,w})},
\end{align*}which proves \eqref{3}.
\end{proof}

\subsubsection{Lower estimates.}

To establish the lower estimates, we require the following lemma.

\begin{lem}\label{lem3.4}{\rm (\cite[Proposition 4.8]{DW}.)}
Let $w$ be an $A_\infty$ weight on $\mathbb S^d$, and let $X$ be a linear subspace of~$\Pi_m(\mathbb S^d)$ with $\dim X\ge\varepsilon\,\dim\Pi_m(\mathbb S^d)$ for some
$\varepsilon\in(0,1)$. Then there exists a function $f\in X$ such
that $\|f\|_{p,w}\asymp 1$ for  $1\le p\le\infty$
with the constants of equivalence depending only on $\varepsilon,d$ and the $A_\infty$ constant of $w$.
\end{lem}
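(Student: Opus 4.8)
The plan is to exploit the fact that a subspace filling a fixed proportion of $\Pi_m(\mathbb S^d)$ must contain a polynomial that is \emph{flat}, i.e.\ comparable to its sup-norm on a set of definite area, and then to convert area into $w$-measure through the $A_\infty$ hypothesis.

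First I would reduce the statement. After rescaling $w$ to a probability measure (which affects none of the constants qualitatively), H\"older's inequality gives $\|f\|_{1,w}\le\|f\|_{p,w}\le\|f\|_{\infty}$ for all $1\le p\le\infty$, so it suffices to find $f\in X$ with $\|f\|_{1,w}\gtrsim\|f\|_\infty$. In fact it is enough to produce $f\in X$ and a measurable set $S\subseteq\mathbb S^d$ with $|S|\gtrsim_\varepsilon1$ on which $|f|\gtrsim\|f\|_\infty$: the defining inequality of the $A_\infty$ weight, used in the direction $w(S)\ge\beta^{-1}w(\mathbb S^d)\,(|S|/|\mathbb S^d|)^{\beta}$, then yields
$$\|f\|_{1,w}\ \ge\ \int_S|f|\,w\,\mathrm d\sigma\ \gtrsim\ \|f\|_\infty\,w(S)\ \gtrsim_{\varepsilon,\beta}\ \|f\|_\infty .$$
This is the only place the $A_\infty$ property (as opposed to mere doubling) enters.

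Second I would show, via the Christoffel function, that $X$ is ``rich'' on a set of definite area. Fix an $L_{2,w}$-orthonormal basis $\{\phi_j\}$ of $X$ and put $G_X=\sum_j|\phi_j|^2$, so that $\int_{\mathbb S^d}G_X\,w\,\mathrm d\sigma=\dim X\ge\varepsilon\dim\Pi_m$, while pointwise $G_X\le G_{\Pi_m}\asymp w({\rm c}(\cdot,1/m))^{-1}$ by the Christoffel-function estimate for doubling weights (a consequence of Nikolskii's inequality \eqref{2.4}). Writing $\mathrm d\mu:=G_{\Pi_m}\,w\,\mathrm d\sigma$, a measure of total mass $\asymp\dim\Pi_m$ that charges each $(1/m)$-cap by $\asymp1$, a Markov/averaging argument applied to $G_X/G_{\Pi_m}\in[0,C]$ produces a set $E$ with $G_X\gtrsim_\varepsilon G_{\Pi_m}$ and $\mu(E)\gtrsim\varepsilon\dim\Pi_m$. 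Hence $E$ meets $\gtrsim\varepsilon m^d$ of the caps, and the union $S_0$ of these caps has area $|S_0|\gtrsim_\varepsilon1$.

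Finally, and this is the crux, I would manufacture a single flat $f\in X$ out of this richness. On each cap meeting $E$ the space $X$ contains a function nearly saturating its Christoffel function there; combining these witnesses by a random-sign (selector) argument, and controlling the sup-norm of the combination through the localization of polynomial reproducing kernels for doubling weights together with Nikolskii's inequality, yields $f\in X$ with $|f|\gtrsim\|f\|_\infty$ on a union $S\subseteq S_0$ of caps of area $\gtrsim_\varepsilon1$. The hard part is precisely this spreading step: one must pass from pointwise, cap-by-cap witnesses to one polynomial that does not concentrate, so that its modulus stays comparable to $\|f\|_\infty$ across a positive-area portion of the sphere. This is where the proportionality $\dim X\ge\varepsilon\dim\Pi_m$ is genuinely used, and where the Marcinkiewicz--Zygmund and Nikolskii machinery for doubling weights (Lemma \ref{lem3.2} and \eqref{2.4}) does the real work; the resulting $f$ then satisfies $\|f\|_{p,w}\asymp1$ for every $p$ by the reduction of the first step.
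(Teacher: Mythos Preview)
The paper does not prove this lemma; it is quoted from \cite[Proposition~4.8]{DW}, so there is no in-paper argument to compare against. Your first two steps are sound: the reduction to finding $f\in X$ with $|f|\gtrsim\|f\|_\infty$ on a set of definite Lebesgue area, with the $A_\infty$ inequality converting area to $w$-measure, is exactly the right isolation of the hypothesis, and the Christoffel-function bookkeeping in step~2 is correct.

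The gap is in step~3. You want to glue the cap-by-cap witnesses $f_i\in X$ into a single flat $f$ by a random-sign combination, invoking ``localization of polynomial reproducing kernels'' to control $\|f\|_\infty$. But localization is available only for the kernel of the full space $\Pi_m$; the reproducing kernel $K_X(x,y)=\sum_j\phi_j(x)\phi_j(y)$ of an \emph{arbitrary} subspace $X\subseteq\Pi_m$ has no off-diagonal decay, so the cross terms in $\sum_i\epsilon_i a_i f_i$ are uncontrolled and the combination need be neither flat nor sup-norm bounded. Knowing that $X$ contains, at each of many points, \emph{some} function large there does not produce \emph{one} function large at many points simultaneously. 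A substitute that does work, and which bypasses step~2 entirely, is an extremal (vertex-of-polytope) argument: discretize via a maximal $(\delta/m)$-separated set $\{\xi_i\}_{i=1}^N$ with $N\asymp m^d$, and observe that any $k$-dimensional subspace of $\mathbb R^N$ contains a vector $v$ with at least $k$ coordinates equal to $\pm\|v\|_\infty$ (while fewer than $k$ coordinates are extremal, perturb inside the subspace in a direction vanishing on the extremal coordinates until a new one hits the maximum). With $k=\dim X\gtrsim\varepsilon N$ this yields $f\in X$ with $|f(\xi_i)|=\max_j|f(\xi_j)|$ on $\gtrsim\varepsilon N$ nodes; the $L_\infty$-MZ inequality bounds $\|f\|_\infty$, Bernstein's inequality spreads the lower bound to the surrounding caps of total area $\gtrsim\varepsilon$, and your first-step reduction finishes.
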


\begin{proof}[Proof of lower estimates.]

\

Let $\boldsymbol\xi_1,\dots,\boldsymbol\xi_n$ be any $n$
distinct points on $\mathbb S^d$. Take a positive integer $N$ such
that $2n\le\dim\Pi_N(\mathbb S^d)\le C_0n$, and denote
\begin{equation}\label{3.5-0}
X_0:=\big\{g\in\Pi_N(\mathbb S^d):\ g(\boldsymbol\xi_j)=0\ {\rm for\ all}\ j=1,\dots,n\big\}.
\end{equation}
Then, $X_0$ is a linear subspace of $\Pi_N(\mathbb S^d)$ with $$\dim X_0\ge\dim\Pi_N(\mathbb S^d)-n\ge\frac12\dim\Pi_N(\mathbb S^d).$$

\emph{Case 1.} Let $w_{\boldsymbol\kappa}$ be a Dunkl weight
 on $\mathbb S^d$. It follows from Lemma \ref{lem3.4} that there exists a function
$g_0\in X_0$ such that
$\|g_0\|_{p_0,w_{\boldsymbol\kappa}}\asymp1$ for all $1\le
p_0\le\infty$. Let $f_0({\bf x})=N^{-r}(g_0({\bf x}))^2$. Then by $f_0\in \Pi_{2N}(\mathbb S^d)$ and  \eqref{2.1}  we have
$$
  \|f_0\|_{W_{p,w_{\boldsymbol\kappa}}^r} =\|f_0\|_{p,w_{\boldsymbol\kappa}}
  +\|(-\Delta_{\boldsymbol\kappa,0})^{r/2}f_0\|_{p,w_{\boldsymbol\kappa}}\lesssim N^r\|f_0\|_{p,w_{\boldsymbol\kappa}}=\|g_0\|_{2p,w_{\boldsymbol\kappa}}^2\asymp
  1.
$$
Hence, there exists a  constant $C>0$ such that $0<f_1=Cf_0\in
BW_{p,w_{\boldsymbol\kappa}}^r$, and
$$f_1(\boldsymbol\xi_1)=\dots=f_1(\boldsymbol\xi_n)=0,\ \
\int_{\mathbb S^d}f_1({\bf x})w_{\boldsymbol\kappa}({\bf x}){\rm
d}\sigma({\bf x})\asymp
N^{-r}\|g_0\|_{2,w_{\boldsymbol\kappa}}^2\asymp n^{-r/d}.$$
 Since $BW_{p,w_{\boldsymbol\kappa}}^r$ is convex and balanced, we derive from \eqref{1} that
\begin{align*}
  e_n^{\rm det}\left({\rm Int}_{\mathbb S^{d},w_{\boldsymbol\kappa}}; BW_{p,w_{\boldsymbol\kappa}}^r\right)\ge
  \inf_{\sub{\boldsymbol\xi_1,\dots,\boldsymbol\xi_n\in \mathbb S^d}}\int_{\mathbb S^d}f_1({\bf x})w_{\boldsymbol\kappa}({\bf x}){\rm d}\sigma({\bf x})\asymp n^{-r/d}.
\end{align*}

 \emph{Case 2.} Let $w$ be an $A_\infty$ weight
 on $\mathbb S^d$, and  $\Theta_1(t)$, $\Theta(t)=t^r\Theta_1(t)\in\Phi_s^*$. It follows from Lemma \ref{lem3.4} that there exists a function $g_0\in\Pi_N(\mathbb S^d)$ with  $\|g_0\|_{p_0,w}\asymp1$ for all $1\le p_0\le\infty$.
Let $f_0({\bf x})=\Theta\left(N^{-1}\right)(g_0({\bf x}))^2$, and
let $s$ be an integer such that $2^{s-1}\le N<2^s$. Then by $f_0\in\Pi_{2^{s+1}}(\mathbb S^d)$ and $E_{2^j}(f_0)_{p,w}\le \|f_0\|_{p,w}$ we have
\begin{align*}
  \|f_0\|_{B_\gamma^\Theta(L_{p,w})} &=\|f_0\|_{p,w}+\left(\sum_{j=0}^{s+1}\Big(\frac{E_{2^j}(f_0)_{p,w}}{\Theta(2^{-j})}\Big)^\gamma\right)^{1/\gamma}
  \\&\lesssim \left(\sum_{j=0}^{s+1}\Big(\frac1{\Theta(2^{-j})}\Big)^\gamma \right)^{1/\gamma}\|f_0\|_{p,w}.
\end{align*}Notice that
\begin{align}\label{3.5-000}
  \sum_{j=0}^{s+1}\left(\frac1{\Theta(2^{-j})}\right)^\gamma&\lesssim 2^{-(s+1)\alpha_1\gamma}\Theta\left(2^{-s-1}\right)^{-\gamma}\sum_{j=0}^{s+1}2^{j\alpha_1\gamma}\notag
  \\&\asymp 2^{-(s+1)\alpha_1\gamma}\Theta\left(2^{-s-1}\right)^{-\gamma}2^{(s+1)\alpha_1\gamma}\notag
  \\&=\Theta\left(2^{-s-1}\right)^{-\gamma},
\end{align}where in the first inequality we used
$$\frac{\Theta\left(2^{-j}\right)}{2^{-j\alpha_1}}\gtrsim \frac{\Theta\left(2^{-(s+1)}\right)}{2^{-(s+1)\alpha_1}},\ 0\le j\le s+1,$$
for some $\alpha_1>0$ since $\Theta\in\Phi_s^*$.
It follows that
\begin{align*}
  \|f_0\|_{B_\gamma^\Theta(L_{p,w})} &\lesssim \Theta\left(2^{-s-1}\right)^{-1}\Theta\left(N^{-1}\right)\|g_0\|_{2p,w}^2\asymp1.
\end{align*}Hence, there exists a  constant $C>0$ such that
$0<f_1=Cf_0\in
BB_\gamma^\Theta(L_{p,w})$, and
$$f_1(\boldsymbol\xi_1)=\dots=f_1(\boldsymbol\xi_n)=0,\  \int_{\mathbb
S^d}f_1({\bf x})w({\bf x}){\rm d}\sigma({\bf x})\asymp \Theta(N^{-1})\|g_0\|_{2,w}^2\asymp \Theta(n^{-1/d}).$$  Since $BB_{\gamma}^\Theta(L_{p,w})$ is convex and balanced, we derive from \eqref{1} that
\begin{align*}
  e_n^{\rm det}\left({\rm Int}_{\mathbb S^d,w}; BB_{\gamma}^\Theta(L_{p,w})\right)&\ge
  \inf_{\sub{\boldsymbol\xi_1,\dots,\boldsymbol\xi_n\in \mathbb S^d}}\int_{\mathbb S^d}f_1({\bf x})w({\bf x}){\rm d}\sigma({\bf x})\asymp \Theta(n^{-1/d}),
\end{align*}
which finishes the proof of the lower estimates.
\end{proof}

\subsection{Randomized case.}

This subsection focuses on establishing estimates for  randomized case errors.
More precisely,
let $1\le p\le\infty$ and $0<\gamma\le\infty$. The following results will be  proved.
\begin{itemize}
  \item [(i)] If $w_{\boldsymbol\kappa}$ is a Dunkl weight with the critical index $s_{w_{\boldsymbol\kappa}}$ on $\mathbb S^d$,
then for $r>s_{w_{\boldsymbol\kappa}}/p$,
\begin{equation}\label{3.23}
e_n^{\rm ran}\left({\rm Int}_{\mathbb S^{d},w_{\boldsymbol\kappa}}; BW_{p,w_{\boldsymbol\kappa}}^r\right)\asymp n^{-\frac r{d}-\frac12+\left(\frac1p-\frac12\right)_+}.
\end{equation}
  \item [(ii)] If $w_{\boldsymbol\kappa}$ is  a product weight  with the critical index $s_{w_{\boldsymbol\kappa}}$ on $\mathbb S^d$, and  $\Theta_1(t)$, $\Theta(t)=t^r\Theta_1(t)\in\Phi_s^*$, then for  $r>s_{w_{\boldsymbol\kappa}}/p$,
  \begin{equation}\label{3.25}
e_n^{\rm ran}\left({\rm Int}_{\mathbb S^d,w_{\boldsymbol\kappa}};
BB_{\gamma}^\Theta(L_{p,w_{\boldsymbol\kappa}})\right)\asymp
\Theta\big(n^{-\frac1{d}}\big)n^{-\frac12+(\frac1p-\frac12)_+}.
\end{equation}
\end{itemize}

\subsubsection{Upper estimates.}

We will first present the proof of upper estimates. As mentioned in Remark \ref{rem1}, it remains to prove the upper estimate of \eqref{3.25}. For our purpose, it is essential to develop an algorithm to attain the upper bound.
We will use constructive polynomial approximation that
uses function values (the samples) at selected well-distributed points (sometimes
called standard information).
Here the ``well-distributed" points indicate that those points satisfy $L_{p,w}$ Marcinkiewicz-Zygmund (MZ) inequalities  on $\mathbb S^d$ as follows.

\begin{defn}Given a set of points $\{{\bf x}_1,{\bf x}_2,\dots,{\bf x}_N\}$
 in $\mathbb S^{d}$ and a set of positive numbers $\{\tau_1,\tau_2,\dots,\tau_N\}$, we say that they constitute
 an \emph{$L_{p,w}$, $1\le p<\infty$, Marcinkiewicz-Zygmund family} for $\Pi_n(\mathbb S^{d})$, denoted by $L_{p,w}$-MZ, if there exist two constants $A,\, B >0$ independent
of $n$ and $N$ such that
\begin{equation}\label{3.5}
  A\|P\|_{p,w}^p\le\sum\limits_{k=1}^{N}\tau_k|P({\bf x}_{k})|^p\le B\|P\|_{p,w}^p,\ {\rm for\ all}\ P\in \Pi_n(\mathbb S^{d});
\end{equation}and they constitute an $L_{\infty}$ Marcinkiewicz-Zygmund family for $\Pi_n(\mathbb S^{d})$, denoted by $L_\infty$-MZ, if there exists a constant $C>0$ independent
of $n$ such that
$$
  \|P\|_{\infty}\le C\max\limits_{1\le k\le N}|P({\bf x}_{k})|,\ {\rm for\ all}\ P\in \Pi_n(\mathbb S^{d}).
$$\end{defn}
\begin{rem}Equivalently, an $L_{p,w}$-MZ family  means that the $L_p$ norm of $P\in\Pi_n(\mathbb S^{d})$ is comparable to the discritized norm $\|P\|_{(p)}$ given by the weighted $\ell_p$ norm of its restriction to these points, where
$$\|f\|_{(p)}:=\Bigg\{
\begin{aligned}&\Big(\sum\limits_{k=1}^{N}\tau_k|f({\bf x}_{k})|^p\Big)^{1/p},&&1\le
p<\infty,
\\
&\max\limits_{k=1,\dots,N}|f({\bf x}_{k})|,
&&p=\infty,\end{aligned}=\Bigg\{
\begin{aligned}&\Big(\int_{\mathbb
S^{d}}|f({\bf x})|^p{\rm d}\mu_N({\bf x})\Big)^{1/p},&&1\le p<\infty,
\\
&\max\limits_{k=1,\dots,N}|f({\bf x}_{k})|,
&&p=\infty,\end{aligned} $$ for $f\in C(\mathbb S^{d})$, and
$\mu_N:=\sum_{k=1}^{N}\tau_{k}\delta_{{\bf x}_{k}}.$
\end{rem}

\begin{rem}\label{rem4}For a doubling weight $w$ on $\mathbb S^d$, such an $L_{p,w}$-MZ family for $\Pi_n(\mathbb S^{d})$ exists. For example,
it follows from \cite[Chapter 5]{DX} that there exists a
$\delta_0>0$ such that for $\delta\in(0,\delta_0)$, a maximal
$\delta/n$-separated subset $\{{\bf x}_1,\dots,{\bf x}_N\}$ is an
$L_{p,w}$-MZ family
 with $\dim\Pi_n(\mathbb S^{d})\le N\asymp n^d$ and $\{\tau_k:=w({\rm c}({\bf x}_k,\delta/n)),k=1,\dots,N\}$.
\end{rem}

\subsubsection*{Weighted least $\ell_p$ approximation.}

Based on an $L_{p,w}$-MZ family, we can construct a weighted least $\ell_p$ approximation on $\mathbb S^{d}$ for a continuous function on $\mathbb S^{d}$.
\begin{defn} Let $1\le p\le\infty$. Then, for $f\in C(\mathbb S^{d})$, its
weighted least $\ell_p$ approximation $L_{n,p}(f)$ is defined by
$$
         L_{n,p}(f):=\arg\min\limits_{P\in\Pi_n(\mathbb S^{d})}\,\|f-P\|_{(p)}.
$$
\end{defn}
\begin{rem} Obviously, the solution of this problem exists for all $f\in
C(\mathbb S^d)$. Further, if $1< p<\infty$, then the solution is
unique. If $p=1$ or $\infty$, then the solution may be not unique
and we can choose any solution to be $ L_{n,p}(f)$.  We note  the
operator $L_{n,p}$ is not linear except the case $p=2$.
\end{rem}
\begin{rem} By the definition of $L_{n,p}$ we have
 \begin{equation}\label{3.6}
 \|f-L_{n,p}(f)\|_{(p)}=\min_{P\in\Pi_n(\mathbb S^{d})}\|f-P\|_{(p)},
 \end{equation}which follows that  for all $P\in\Pi_n(\mathbb S^d)$,
\begin{equation}\label{3.7}
\|f-L_{n,p}(f)\|_{(p)}
  \le \|f-P\|_{(p)}.
\end{equation}
 \end{rem}

Such approximation operators were first introduced by  Gr\"ochenig
in \cite{G} with $p=2$. Given a sequence of Marcinkiewicz-Zygmund
inequalities in $L_2$ on a compact space,
  weighted least squares approximation
 and least squares quadrature were investigated in \cite{G, LuW}. Inspired by these works, for all $1\le p\le\infty$, the authors   in \cite{LLGW} developed  weighted
 least $\ell_p$ approximation induced by a sequence of Marcinkiewicz-Zygmund inequalities
 in $L_p$ on a compact  smooth Riemannian manifold  with  normalized Riemannian measure. In this paper we will use the methods in \cite{LLGW} to study the weighted case on the sphere.
We  now present the approximation theorem for the weighted least
$\ell_p$ approximation.
\begin{thm}\label{thm3.1}  Let $1\le p,q\le\infty$, $0<\gamma\le\infty$, and $\Theta_1(t)$, $\Theta(t)=t^r\Theta_1(t)\in\Phi_s^*$. If $w$ is a doubling weight
   with the critical index $s_w$
   on $\mathbb S^d$, then for $f\in B_\gamma^\Theta(L_{p,w})$ with
$r>s_w/p$,
  \begin{equation}\label{3.9}
  \|f-L_{n,p}(f)\|_{q,w}\le C\Theta\left(n^{-1}\right)n^{\left(\frac1p-\frac1q\right)_+s_w}\|f\|_{B_\gamma^\Theta(L_{p,w})},
\end{equation}
where $C>0$ are independent of  $f$ and $n$.
\end{thm}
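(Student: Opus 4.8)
The plan is to compare $L_{n,p}(f)$ with a near-optimal spherical polynomial and then transfer the resulting \emph{discrete}-norm bound back to the continuous $L_{q,w}$-norm. I would fix the integer $m$ with $2^m\le n<2^{m+1}$ and set $g:=\delta_{2^m}(f)\in\Pi_{2^m}(\mathbb S^d)\subseteq\Pi_n(\mathbb S^d)$, the best $L_{p,w}$-approximant of degree $2^m$. By the embedding $B_\gamma^\Theta(L_{p,w})\hookrightarrow C(\mathbb S^d)$ from Proposition~\ref{thm2.1} (valid since $r>s_w/p$), the expansion $f-g=\sum_{j\ge m+2}\sigma_j(f)$ converges uniformly, so all point evaluations below are legitimate. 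The first move is the splitting
\[
\|f-L_{n,p}(f)\|_{q,w}\le \|f-g\|_{q,w}+\|g-L_{n,p}(f)\|_{q,w}=:I+II,
\]
and I would estimate the two pieces separately, reducing both to $\|f-g\|_{q,w}$ and to the discrete norm $\|f-g\|_{(p)}$.

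To handle $II$, note that $g-L_{n,p}(f)\in\Pi_n(\mathbb S^d)$, so the weighted Nikolskii inequality \eqref{2.4} gives $II\lesssim n^{(1/p-1/q)_+s_w}\|g-L_{n,p}(f)\|_{p,w}$, and the lower Marcinkiewicz--Zygmund bound in \eqref{3.5} replaces the continuous norm by the discrete one, $\|g-L_{n,p}(f)\|_{p,w}\lesssim\|g-L_{n,p}(f)\|_{(p)}$. The minimizing property \eqref{3.7}, applied with the admissible competitor $P=g$, yields $\|f-L_{n,p}(f)\|_{(p)}\le\|f-g\|_{(p)}$, whence by the triangle inequality $\|g-L_{n,p}(f)\|_{(p)}\le 2\|f-g\|_{(p)}$. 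Thus $II\lesssim n^{(1/p-1/q)_+s_w}\|f-g\|_{(p)}$, and everything comes down to two tail estimates.

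For $I$ I would use $\|f-g\|_{q,w}\le\sum_{j\ge m+2}\|\sigma_j(f)\|_{q,w}$ together with the dyadic bound \eqref{3.6-1}, namely $\|\sigma_j(f)\|_{q,w}\lesssim 2^{(j-1)(1/p-1/q)_+s_w}\Theta(2^{-(j-2)})\|f\|_{B_\gamma^\Theta(L_{p,w})}$. Writing $\Theta(t)=t^r\Theta_1(t)$ with $\Theta_1$ almost increasing and using $r>s_w/p\ge(1/p-1/q)s_w$, the exponent of the resulting geometric series is negative, so the sum is dominated by its leading term; after invoking $\Theta_1(2^{-m})\asymp\Theta_1(n^{-1})$ (cf. \eqref{3.13-1}) this produces $I\lesssim n^{(1/p-1/q)_+s_w}\Theta(n^{-1})\|f\|_{B_\gamma^\Theta(L_{p,w})}$.

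The crux, and the step I expect to be the main obstacle, is the discrete-norm tail $\|f-g\|_{(p)}\le\sum_{j\ge m+2}\|\sigma_j(f)\|_{(p)}$: each $\sigma_j(f)$ has degree $2^{j-1}>n$, so the Marcinkiewicz--Zygmund inequality \eqref{3.5} cannot be applied to it directly. Here Lemma~\ref{lem3.2} is decisive. The upper bound in \eqref{3.5} says that $\mu_N=\sum_k\tau_k\delta_{{\bf x}_k}$ satisfies hypothesis \eqref{3.10} with $p_0=p$ and $N=n$, so Lemma~\ref{lem3.2} gives $\|\sigma_j(f)\|_{(p)}\lesssim(2^{j-1}/n)^{s_w/p}\|\sigma_j(f)\|_{p,w}$ for every $j\ge m+2$. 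Combining this with \eqref{3.6-0} and summing the (again convergent, since $r>s_w/p$) geometric series yields $\|f-g\|_{(p)}\lesssim\Theta(n^{-1})\|f\|_{B_\gamma^\Theta(L_{p,w})}$, hence $II\lesssim n^{(1/p-1/q)_+s_w}\Theta(n^{-1})\|f\|_{B_\gamma^\Theta(L_{p,w})}$; adding the bounds for $I$ and $II$ gives \eqref{3.9}. Finally, the endpoint $p=\infty$ needs only a brief separate word: there $\|h\|_{(\infty)}=\max_k|h({\bf x}_k)|\le\|h\|_\infty$ replaces the use of Lemma~\ref{lem3.2}, the $L_\infty$ Marcinkiewicz--Zygmund inequality plays the role of the lower bound in \eqref{3.5}, and $(1/p-1/q)_+=0$ throughout, so the same argument carries over.
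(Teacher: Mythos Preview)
Your proposal is correct and follows essentially the same route as the paper's proof: the same splitting $\|f-L_{n,p}(f)\|_{q,w}\le\|f-g\|_{q,w}+\|g-L_{n,p}(f)\|_{q,w}$ with $g$ a dyadic best approximant in $\Pi_n$, the same use of the lower MZ bound plus \eqref{3.7} to reduce $II$ to the discrete tail $\|f-g\|_{(p)}$, the same appeal to Lemma~\ref{lem3.2} (fed by the upper MZ bound) to control the high-degree blocks in that tail, and Nikolskii to pass from $p$ to $q$. The only differences are cosmetic: the paper indexes so that $2^{m-1}\le n<2^m$ and takes $g=\delta_{2^{m-1}}(f)$, and it applies Nikolskii after rather than before passing to the discrete norm.
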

\begin{proof} We are going to prove the case of $1\le p<\infty$, since the case $p=\infty$ can be easily checked with a little difference. We keep the notations $\sigma_j$ and $\delta_\ell$ in the proof of Proposition \ref{thm2.1}.
Let $m$ be an integer such that $2^{m-1}\le n<2^m$.
If $r>s_w/p$, then for $1\le q<\infty$,
\begin{align}\label{3.12}
  \|f-L_{n,p}(f)\|_{q,w}&\le\|f-\delta_{2^{m-1}}(f)\|_{q,w}+\|L_{n,p}(f)-\delta_{2^{m-1}}(f)\|_{q,w}\notag
  \\&=:I_1+I_2.
\end{align}

On the one hand, since the
series $\sum_{j=m+1}^\infty\sigma_j(f)$ converges to
$f-\delta_{2^{m-1}}(f)$ in $L_{q,w}$ norm, by \eqref{3.6-1} we obtain
\begin{align*}
  I_1&\lesssim \sum_{j=m+1}^\infty\| \sigma_j(f)\|_{q,w}
   \lesssim \sum\limits_{j=m+1}^\infty 2^{(j-1)\left(\frac1p-\frac1q\right)_+s_w}\Theta(2^{-(j-2)})\,\|f\|_{B_\gamma^\Theta(L_{p,w})},
\end{align*}Notice that
\begin{align}\label{3.13-00}
  &\quad\sum\limits_{j=m+1}^\infty 2^{(j-1)\left(\frac1p-\frac1q\right)_+s_w}\Theta(2^{-(j-2)})\notag
  \\&\lesssim 2^{(m-1)\alpha_2}\Theta_1(2^{-(m-1)})\sum_{j=m+1}^\infty 2^{(j-1)\left(-\alpha_2-r+\left(\frac1p-\frac1q\right)_+s_w\right)}\notag
  \\&\asymp 2^{(m-1)\alpha_2}\Theta_1(2^{-(m-1)}) 2^{(m-1)\left(-\alpha_2-r+\left(\frac1p-\frac1q\right)_+s_w\right)}\notag
  \\&\lesssim \Theta(n^{-1})n^{\left(\frac1p-\frac1q\right)_+s_w},
\end{align}where in the  first inequality we used
$$\frac{\Theta_1\left(2^{-(j-2)}\right)}{2^{-(j-2)\alpha_2}}\lesssim \frac{\Theta_1\left(2^{-(m-1)}\right)}{2^{-(m-1)\alpha_2}},\ j\ge m+1,$$
for some $\alpha_2>0$ since $\Theta_1\in\Phi_s^*$. Thus, we get
\begin{align}\label{3.13}
  I_1\lesssim \Theta(n^{-1})n^{\left(\frac1p-\frac1q\right)_+s_w}\,\|f\|_{B_\gamma^\Theta(L_{p,w})},
\end{align}

On the other hand,   we derive from
\eqref{3.5} and \eqref{3.7} that
\begin{align}
  \|L_{n,p}(f)-\delta_{2^{m-1}}(f)\|_{p,w}&\lesssim\|L_{n,p}(f)-\delta_{2^{m-1}}(f)\|_{(p)}
  \notag\\&\le \|L_{n,p}(f)-f\|_{(p)}+\|f-\delta_{2^{m-1}}(f)\|_{(p)}
  \notag\\&\le 2\|f-\delta_{2^{m-1}}(f)\|_{(p)}
  \nonumber\\&\lesssim\sum\limits_{j=m+1}^\infty\|
  \sigma_j(f)\|_{(p)}.\label{3.14}
\end{align}
According to \eqref{3.5} we know that \eqref{3.10} is
true for $\{\tau_{k}\}_{k=1}^{N}$ with $p_0=p$ and $C_0=B$. Note
that $\sigma_j(f)\in\Pi_{2^{j-1}}(\mathbb S^{d})$, and $2^{j-1}\ge n$ for $j\ge m+1$.
It follows from Lemma \ref{lem3.2}
 that  for $j\ge m+1$,
\begin{align*}
  \| \sigma_j(f)\|_{(p)}^p
  &=\int_{\mathbb S^{d}}|\sigma_j(f)({\bf x})|^p{\rm d}\mu_N({\bf x})
  \\&\lesssim \left(\frac{2^{j-1}}n\right)^{s_w}\|\sigma_j(f)\|_{p,w}^p
  \\&\lesssim
  \left(\frac{2^{j-1}}n\right)^{s_w}\Theta(2^{-(j-2)})^p\,\|f\|_{B_\gamma^\Theta(L_{p,w})}^p
\end{align*}that is,
\begin{equation}\label{3.15}
  \| \sigma_j(f)\|_{(p)}\lesssim n^{-\frac{s_w}p}2^{(j-1)\frac{s_w}p}\Theta(2^{-(j-2)})\,\|f\|_{B_\gamma^\Theta(L_{p,w})}.
\end{equation}
Hence, for $r>s_w/p$, by \eqref{3.15},
\eqref{3.14}, and \eqref{3.13-00},  we obtain
\begin{align}\label{3.16}
  \|L_{n,p}(f)-\delta_{2^{m-1}}(f)\|_{p,w}&\lesssim \sum_{j=m+1}^\infty\| \sigma_j(f)\|_{(p)}\notag
  \\&\lesssim n^{-\frac{s_w}p}
  \sum_{j=m+1}^\infty 2^{(j-1)\frac{s_w}p}\Theta\left(2^{-(j-2)}\right)\|f\|_{B_\gamma^\Theta(L_{p,w})}\notag
  \\&\asymp \Theta\left(n^{-1}\right)\|f\|_{B_\gamma^\Theta(L_{p,w})}.
\end{align}
Note that $L_{n,p}(f)-\delta_{2^{m-1}}(f)\in\Pi_n(\mathbb S^d)$. Then by the weighted Nikolskii's inequality \eqref{2.4}, \eqref{3.16} implies that
\begin{align}\label{3.16-0}
  I_2&\lesssim n^{\left(\frac1p-\frac1q\right)_+s_w}\|L_{n,p}(f)-\delta_{2^{m-1}}(f)\|_{p,w}\notag
  \\&\lesssim \Theta(n^{-1})n^{\left(\frac1p-\frac1q\right)_+s_w}\,\|f\|_{B_\gamma^\Theta(L_{p,w})}.
  \end{align}

By \eqref{3.12}, \eqref{3.13}, and  \eqref{3.16-0}, we get
\eqref{3.9} for $1\le p<\infty$. This completes the proof.
\end{proof}

We observe that the orders specified in Theorem \ref{thm3.1} are optimal for $1\le q\le p\le\infty$. This can be demonstrated through
 the optimal recovery of  weighted generalized Besov space $B_\gamma^\Theta(L_{p,w}(\mathbb S^d))$ as stated in Theorem \ref{thm1.4}.
 Now we give the proof of Theorem \ref{thm1.4}.

\begin{proof}[Proof of Theorem \ref{thm1.4}]\

 For the upper bound, let $L_{m,p}$ be the weighted least $\ell_p$ approximation operator induced by an $L_{p,w}$-MZ family $\{{\bf x}_1,\dots,{\bf x}_N\}$ with $\dim\Pi_m(\mathbb S^d)\le N\asymp m^d\asymp n$. Then, by Theorem \ref{thm3.1} we have
$$g_n(BB_\gamma^\Theta(L_{p,w}),L_{q,w})\le\sup_{f\in BB_\gamma^\Theta(L_{p,w})}\|f-L_{m,p}(f)\|_{q, w}\lesssim \Theta(m^{-1})\asymp \Theta(n^{-1/d}).$$

Now  we turn to show the lower bound.  To this end, let
$\boldsymbol\xi_1,\dots,\boldsymbol\xi_n$ be any $n$ distinct
points on $\mathbb S^d$. Then following the proof of the lower
estimates in the deterministic case setting, we know that there
exists a function $f_1\in BB_\gamma^\Theta(L_{p,w})$ satisfying
$f_1(\boldsymbol\xi_1)=\dots=f_1(\boldsymbol\xi_n)=0$ and
$\|f_1\|_{q,w}\asymp \Theta(n^{-1/d})$. Then it follows from
\eqref{1.15} that
$$g_n(BB_\gamma^\Theta(L_{p,w}),L_{q,w})\ge\inf_{\substack{\boldsymbol\xi_1,\dots,\boldsymbol\xi_n\in
\mathbb S^d}}\|f_1\|_{q,w}\asymp
\Theta(n^{-1/d}).$$

Together with the fact that the weighted least $\ell_2$ approximation operator  is linear and \eqref{1.15}, the proof of Theorem \ref{thm1.4} is finished.
\end{proof}

According to Remark \ref{rem4}, we know that for an $A_\infty$ weight and $1\leq
p\leq\infty$, there exists an $L_{p,w}$-MZ family
on $\mathbb S^d$ with $\dim\Pi_m(\mathbb S^d)\le N\asymp m^{d}\asymp n$. Based on the proof of Theorem
\ref{thm1.4}, we can conculde that for $1\leq
q\le p\leq \infty$ and $r>s_w/{p}$,
$$\sup_{f\in
BB_\gamma^\Theta(L_{p,w})}\|f-L_{m,p}(f)\|_{q,w}\asymp
\Theta\left(n^{-1/d}\right)\asymp g_n(BB_\gamma^\Theta(L_{p,w}),L_{q,w}),$$
 which implies that
the weighted least $\ell_p$  approximation operator
$L_{m,p}$ serves as an
asymptotically optimal algorithm for optimal recovery of
$BB_\gamma^\Theta(L_{p,w})$ measured in the $L_{q,w}$ norm.

\subsubsection*{Standard Monte
Carlo algorithm.} To achieve the upper estimates, following
Heinrich \cite{H1}, we also need a concrete Monte Carlo method by
virtue of the standard Monte Carlo algorithm. It is defined as
follows: let $\{\boldsymbol\xi_i\}_{i=1}^M$ be independent,
$\mathbb S^{d}$-valued, distributed over $\mathbb S^{d}$ with
respect to the measure $w({\bf x}){\rm d}\sigma({\bf x})$ random
vectors on a probability space $(\mathcal{F},\Sigma,\nu)$. For any
$h\in C(\mathbb S^{d})$, we put
\begin{equation}\label{3.20} Q_M^\oz(h)=\frac1M\sum_{i=1}^M
h\left(\boldsymbol\xi_i(\oz)\right),\ \oz\in\mathcal{F}.
\end{equation} Then
$$\mathbb E_\oz Q_M^\oz(h)=\int_{\mathbb S^d}h({\bf x})w({\bf x}){\rm d}\sigma({\bf x}).$$
Following the proof of
\cite[Proposition 5.4]{H1}, we have, for any $h\in C(\mathbb S^{d})$ and $1\le p\le\infty$,
\begin{equation}\label{3.21}\mathbb E_\oz |{\rm Int}_{\mathbb S^{d},w}(h)-Q_M^\oz(h)|\lesssim M^{-\frac12+(\frac1p-\frac12)_+}\|h\|_{p,w}.
\end{equation}Here we omit the proof.

\begin{proof}[Proof of upper estimates of \eqref{3.25}]\

 For a positive integer $n$, define $ M = \lfloor n/2 \rfloor$ and let $\dim \Pi_m(\mathbb{S}^d)\le N \asymp m^d \asymp n$ with $N \le n/2$. Here, $\lfloor x \rfloor$ denotes the largest integer not exceeding $x$. Let $L_{m,p}$ be the weighted least $\ell_p$ approximation operator induced by an $ L_{p,w}$-MZ family $\{{\bf x}_1, \ldots, {\bf x}_N\}$. For a function $f \in C(\mathbb{S}^d)$, we introduce a randomized algorithm $(A_n^{\omega})$ defined as follows:
\[
A_n^{\omega}(f) = Q_M^{\omega}(f - L_{m,p}(f)) + \text{Int}_{\mathbb{S}^{d}, w}(L_{m,p}(f)),
\]
where $Q_M^{\omega}$ is the standard Monte Carlo algorithm  of the
form \eqref{3.20}. It is evident that the algorithm $
(A_n^{\omega})$ utilizes at most $M + N \le n$ values of the
function $f$, thus confirming that $(A_n^{\omega}) \in
\mathcal{A}_n^{\text{ran}}$. Note that the algorithm
$(A_n^{\omega})$ is not linear except $p=2$. A straightforward
verification gives
\[
|{\rm Int}_{\mathbb S^{d},w}(f)-A_n^{\rm \oz}(f)|=|{\rm Int}_{\mathbb S^{d},w}(g)-Q_M^\oz(g)|,
\]
where $g=f-L_{m,p}(f)$.
Combining with \eqref{3.21} and \eqref{3.9}, we obtain, for $f\in
B_\gamma^\Theta(L_{p,w})$, $r>s_{w}/p$,
\begin{align*}\mathbb E_\oz|{\rm Int}_{\mathbb S^{d},w}(f)-A_n^{\oz}(f)|&=\mathbb E_\oz |{\rm Int}_{\mathbb S^{d},w}(g)-Q_M^\oz(g)|\\&\lesssim M^{-\frac12+(\frac1p-\frac12)_+}\|f-L_{m,p}(f)\|_{p,w}
\\&\lesssim M^{-\frac12+(\frac1p-\frac12)_+}\Theta\big(m^{-1}\big)\|f\|_{B_\gamma^\Theta(L_{p,w})}
\\&\asymp \Theta(n^{-\frac1d})n^{-\frac12+(\frac1p-\frac12)_+}\|f\|_{B_\gamma^\Theta(L_{p,w})},
\end{align*}which leads to
$$e_n^{\rm ran}\left({\rm Int}_{\mathbb S^{d},w};BB_\gamma^\Theta(L_{p,w})\right)\lesssim \Theta(n^{-\frac1d})n^{-\frac12+(\frac1p-\frac12)_+}.$$

This completes  the proof of the upper estimate of  \eqref{3.25}.
\end{proof}

We remark that the upper estimate of \eqref{3.25} holds whenever $w$ is a doubling
weight.

\subsubsection{Lower estimates}

For the proof of  lower estimates, we use the technique of Novak,
which relies on the following lemma.
\begin{lem}{\rm(\cite{N1} or \cite[Lemma 3]{N2}.)}\label{lem3.5} Assume that $\Omega^d$ is the multivariate compact domain with a  weight function $w$.
\begin{itemize}
  \item [(a)] Let $F\subset L_{1,w}(\Omega^d)$ and $\psi_j,\,j = 1, \dots, 4n$, with the
following conditions:
\begin{itemize}
  \item[(i)] the functions $\psi_j,\,j = 1, \dots, 4n$ have disjoint supports and satisfy
$${\rm Int}_{\Omega^d,w}(\psi_j)=\int_{\Omega^d} \psi_j({\bf x})w({\bf x}){\rm d}{\bf x}\ge \dz,\ {\rm for}\ j= 1,\dots, 4n.$$

 \item[(ii)] $F_1:=\big\{\sum_{j=1}^{4n}
\az_j\psi_j:\,\alpha_j\in\{-1,1\}\big\}\subset F$.
\end{itemize}
\noindent Then
$$e_n^{\rm ran}({\rm Int}_{\Omega^d,w};F)\ge  \frac{1}{2}\dz n^{\frac12}.$$
  \item [(b)] We assume that instead of (ii) in statement (a) the
property

 (ii') $F_2:=\left\{\pm \psi_j:\, j= 1, \dots,
4n\right\}\subset F$. \vskip 1mm

\noindent Then
$$e^{\rm ran}_n ({\rm Int}_{\Omega^d,w};F)\ge \frac14\dz.$$
\end{itemize}
\end{lem}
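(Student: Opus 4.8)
The plan is to prove both parts by the Bakhvalov--Novak averaging principle: instead of estimating $\sup_{f\in F}\mathbb E_\omega|{\rm Int}_{\Omega^d,w}(f)-A^\omega(f)|$ directly, I would bound it below by the average error over a carefully chosen probability measure $\mu$ supported on a finite subfamily of $F$, and then use Fubini to interchange the expectation over the algorithm's internal randomness $\omega$ with the expectation over $\mu$. This reduces the problem to estimating, for a \emph{fixed} $\omega$ (that is, for a single deterministic adaptive method), the $\mu$-average error. The disjointness of the supports in hypothesis~(i) is the structural engine: each sampled value $f(\mathbf x)$ can reveal information about at most one $\psi_j$, so a budget of roughly $n$ evaluations can probe at most $n$ of the $4n$ bumps, leaving at least $3n$ of them ``invisible.'' Throughout I abbreviate $c_j:={\rm Int}_{\Omega^d,w}(\psi_j)\ge\delta$.

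For part (a) I would take $\mu$ to be the uniform measure on the $2^{4n}$ functions $f_{\boldsymbol\alpha}=\sum_{j=1}^{4n}\alpha_j\psi_j\in F_1\subset F$, i.e. let the signs $\alpha_j$ be i.i.d.\ Rademacher. Fixing $\omega$, I would condition on the transcript $T$ of queried points and observed values. Because the supports are disjoint, $T$ determines exactly the signs $\alpha_j$ for $j$ in the ``touched'' set $S$ and leaves the remaining signs i.i.d.\ symmetric, while $A^\omega$ is $T$-measurable. Writing ${\rm Int}_{\Omega^d,w}(f_{\boldsymbol\alpha})=\sum_{j\in S}\alpha_j c_j+\sum_{j\notin S}\alpha_j c_j$ and using the elementary fact that $\mathbb E|Y-b|\ge\mathbb E|Y|$ for a symmetric $Y$ and a constant $b$, I obtain $\mathbb E[\,|{\rm Int}-A^\omega|\mid T\,]\ge\mathbb E\big|\sum_{j\notin S}\alpha_j c_j\big|$. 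Khintchine's inequality then yields $\mathbb E\big|\sum_{j\notin S}\alpha_j c_j\big|\gtrsim\big(\sum_{j\notin S}c_j^2\big)^{1/2}\ge\sqrt{|S^{c}|}\,\delta$, where $|S^{c}|=4n-|S|$. Finally, to control $|S^{c}|$ under the \emph{expected}-cardinality constraint $\sup_f\mathbb E_\omega n(f,\omega)\le n$, I would invoke Markov's inequality: since $\mathbb E_{\omega,\boldsymbol\alpha}\,n\le n$, the $T$-measurable event $\{|S|\le 2n\}$ (on which $|S^{c}|\ge 2n$) has probability at least $\tfrac12$. Assembling these estimates with the sharp Khintchine constant $1/\sqrt2$ produces the claimed $\tfrac12\delta n^{1/2}$.

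For part (b) I would instead take $\mu$ uniform on the $8n$ functions $\pm\psi_j\in F_2\subset F$, i.e.\ choose an index $J$ uniformly in $\{1,\dots,4n\}$ and an independent sign $\epsilon\in\{-1,1\}$, and set $f=\epsilon\psi_J$. The decisive observation is that if the (fixed-$\omega$) method never queries a point of ${\rm supp}\,\psi_J$, then all observed values vanish, so the method runs exactly as it would on the zero input and its output $a_0(\omega)$ is independent of $J$ and $\epsilon$; its error is then $|\epsilon c_J-a_0(\omega)|$, whose average over $\epsilon=\pm1$ equals $\tfrac12|c_J-a_0|+\tfrac12|c_J+a_0|\ge|c_J|\ge\delta$ by the triangle inequality. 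Since the supports are disjoint, the set of detectable indices is at most the number of queried points, so a budget of order $n$ evaluations detects at most a bounded fraction of the $4n$ bumps; combining this detection count with Markov's inequality for the expected cardinality shows that a uniformly random $J$ stays undetected with probability bounded below by a constant, which delivers the lower bound $\tfrac14\delta$.

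The step I expect to be the main obstacle is making the conditioning argument rigorous in the presence of \emph{adaptive} sampling together with a \emph{randomized, possibly input-dependent} cardinality $n(f,\omega)$. The resolution is to condition on the full query--answer transcript: the branching structure of an adaptive deterministic method guarantees that fixing a transcript freezes precisely the queried signs while leaving the untouched ones i.i.d., and that the number of queries is itself transcript-measurable. One then converts the constraint $\sup_f\mathbb E_\omega n(f,\omega)\le n$ into a high-probability bound on the number of probed bumps via Markov's inequality. The two factors of $\tfrac12$ lost in this process are exactly what separate the sharp constants in (a) and (b) from the information-theoretic ideal.
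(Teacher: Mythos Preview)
The paper does not supply its own proof of this lemma; it is quoted verbatim from Novak \cite{N1}, \cite[Lemma~3]{N2} and used as a black box. Your sketch is precisely the Bakhvalov--Novak averaging argument from those references and is correct. In part~(a), i.i.d.\ Rademacher signs, conditioning on the query transcript, the symmetry inequality $\mathbb E|Y-b|\ge\mathbb E|Y|$, Szarek's sharp Khintchine constant $1/\sqrt2$, and Markov on the cardinality combine to give exactly $\tfrac12\delta n^{1/2}$. In part~(b), the uniform random index $J$, independent sign $\epsilon$, comparison with the zero-input run, and the triangle inequality $\tfrac12(|c_J-a_0|+|c_J+a_0|)\ge c_J\ge\delta$ are the right ingredients.

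The only place that merits one more sentence is the expected-cardinality step in~(b): since $0$ need not belong to $F$ you cannot bound the zero-input length $n_0(\omega)$ (and hence $|S_0(\omega)|$) directly from $\sup_{f\in F}\mathbb E_\omega n(f,\omega)\le n$. A clean way to close this is to use the first-hit times $T_j(\omega)$ along the zero-input path: one has $n(\psi_j,\omega)\ge T_j(\omega)$ whenever $j\in S_0(\omega)$, and the $T_j$'s are distinct positive integers, so $\sum_{j=1}^{4n}\mathbb E_\omega n(\psi_j,\omega)\ge \mathbb E_\omega\sum_{j\in S_0}T_j\ge\tfrac12\mathbb E_\omega|S_0(\omega)|^2$. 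Since the left side is at most $4n^2$, this gives $\mathbb E_\omega|S_0(\omega)|\le\sqrt8\,n$ and hence $P(J\notin S_0)\ge1-1/\sqrt2>\tfrac14$. This is the rigorous form of the Markov step you flagged; with it, both constants in the statement are recovered exactly.
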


Therefore, to obtain the lower estimates of randomized quadrature errors, we shall construct a sequence of functions
$\{\psi_j\}_{j=1}^{4n}$ satisfying the conditions of Lemma
\ref{lem3.5}, which are often call the fooling functions. Their construction follows by the similar arguments as stated in the estimates of the entropy numbers (see \cite{WW}).

\subsubsection*{Construct fooling functions.}

Let
$\boldsymbol\kappa:=(\kappa_1,\dots,\kappa_m)\in[0,+\infty)^m$,
${\bf v}_j\in\mathbb S^{d}$, $j=1,\dots,m$, and
$$w_{\boldsymbol\kappa}({\textbf
x}):=\prod_{j=1}^m\left|\langle{\bf x},{\bf
v}_j\rangle\right|^{2\kappa_j},\ {\bf x}\in\mathbb R^{d+1}.$$ For
$j=1,\dots,m$, we denote
$$E_j:=\left\{{\bf x}\in\mathbb S^{d}:\left|\frac\pi2-d_{\mathbb S}({\bf x},{\bf v}_j)\right|\le2\varepsilon_{d,m}\right\},$$
and
$$\widetilde{E}_j:=\left\{{\bf x}\in\mathbb S^{d}:\left|\frac\pi2-d_{\mathbb S}({\bf x},{\bf v}_j)\right|\le\varepsilon_{d,m}\right\},$$
where $\varepsilon_{d,m}$  is a sufficiently small positive
constant depending only on $d$ and $m$. Furthermore, let
$E:=\bigcup_{j=1}^m E_j$ and $\widetilde{E}=\bigcup_{j=1}^m
\widetilde{E}_j$. Clearly, we have $|E_1|=\dots= |E_m|\asymp
\varepsilon_{d,m}$.  Then the Lebesgue measure of $E$ satisfies
$$|E|\le \sum_{j=1}^m|E_j|\le c_dm\varepsilon_{d,m}\le\frac12 ,$$
provided that $\varepsilon_{d,m}$ is small enough. Thus, the
Lebesgue measure of $\mathbb S^{d}\backslash \widetilde{E}$
satisfies
$$|\mathbb S^{d}\backslash \widetilde{E}|\ge |\mathbb
S^{d}\backslash {E}|\ge\frac12 .$$ Given a positive
integer $n$, we can choose a sufficient large positive integer $N$
such that $N\asymp n^{1/d}$ and $N> \varepsilon_{d,m}^{-1} $. We
take a maximal $2/N$-separated subset $\{{\bf x}_i\}_{i=1}^{M}$ of
$\mathbb S^{d}\backslash {E}$. Then
\begin{equation}\label{0}
{\rm c}\big({\bf x}_i,\frac1N\big)\bigcap{\rm c}\big({\bf x}_j,\frac1N\big)=\emptyset,\ \ {\rm if}\ i\neq j,
\end{equation}which follows from the definition of maximal separated subsets,
and
\begin{equation}\label{00}
{\rm c}\big({\bf x}_j,\frac1N\big)\subset\mathbb S^{d}\backslash
\widetilde{E},\ j=1,\dots,M,
\end{equation}since
\begin{align*}\left|\frac{\pi}2-d_{\mathbb S}({\bf y},{\bf v}_i)\right|&\ge
\left|\frac{\pi}2-d_{\mathbb S}({\bf x}_j,{\bf
v}_i)\right|-\big|d_{\mathbb S}({\bf x}_j,{\bf v}_i)-d_{\mathbb
S}({\bf v}_i,{\bf y})\big|\\ & \ge \left|\frac{\pi}2-d_{\mathbb
S}({\bf x}_j,{\bf
v}_i)\right|-d_{\mathbb S}({\bf x}_j,{\bf y})\\
&>2\varepsilon_{d,m}-\frac1N>\varepsilon_{d,m},\
i=1,\dots,m\end{align*}for any ${\bf y}\in {\rm c}\big({\bf
x}_j,\frac1N\big)$ whenever $N>\varepsilon_{d,m}^{-1}$. Meanwhile,
notice that
$$\frac12\le |\mathbb S^{d}\backslash {E}|\le \left|\bigcup_{i=1}^M{\rm c}\big({\bf x}_i,\frac2N\big)\right|\le M\left|{\rm c}\big({\bf x}_1,\frac2N\big)\right|\asymp MN^{-d}\asymp Mn^{-1}$$
by the maximal separated property of $\{{\bf x}_i\}_{i=1}^{M}$.
Thus, we have  $M\ge 4n$ for sufficiently large $N$ with $N\asymp
n^{1/d}$.


If ${\bf x}\in\mathbb S^{d}\backslash \widetilde E$, then
$$\left|\langle{\bf x},{\bf v}_j\rangle\right|\ge\sin(\varepsilon_{d,m}),\ j=1,\dots,m.$$
Hence,
$$w_{\boldsymbol\kappa}({\bf x})=\prod_{j=1}^m\left|\langle{\bf x},{\bf v}_j\rangle\right|^{2\kappa_j}\ge \left(\sin(\varepsilon_{d,m})\right)^{2|{\boldsymbol\kappa}|},$$
which leads that
$$w_{\boldsymbol\kappa}({\bf x})\asymp 1,\ \ {\rm for\ any}\ {\bf x}\in\mathbb S^{d}\backslash \widetilde E.$$

Let $\varphi$ be a nonnegative $C^\infty$-function on $\mathbb{R}$
supported in $[0,1]$ and being equal to 1 on $[0,1/2]$. As the
above construction, we can choose $4n$ points $\{{\bf
x}_i\}_{i=1}^{4n}$ from a maximal $2/N$-separated subset of
$\mathbb S^{d}\backslash {E}$, and define
$$\varphi_j({\bf x})=\varphi(Nd_{\mathbb S}\left({\bf x},{\bf x}_j)\right),\  j=1,\dots,4n.$$
 Clearly, $\varphi_j,j=1,\dots,4n$ have the following properties:
\begin{itemize}
  \item [(i)] the support sets of $\{\varphi_j\}_{j=1}^{4n}$ are mutually disjoint, that is
  \begin{equation}\label{3.26}{\rm supp}(\varphi_i)\bigcap {\rm supp}(\varphi_j)=\emptyset,\ {\rm for}\ i\neq
j.\end{equation}Indeed, it can be seen from the fact $${\rm supp}(\varphi_j)\subset{\rm c}\big({\bf x}_j,\frac1N\big)
\subset\mathbb S^d\backslash \widetilde{E},\ j=1,\dots,4n.$$

  \item [(ii)] for  $1\le p<\infty$, the $L_{p,w_{\boldsymbol\kappa}}$ norm of $\varphi_j$
  satisfies
  \begin{equation}\label{3.27}\|\varphi_j\|_{p,w_{\boldsymbol\kappa}}\asymp\Big(\int_{{\rm c}({\bf x}_j,\frac1N)}|\varphi(Nd_{\mathbb S}\left({\bf x},{\bf x}_j)\right)|^p{\rm d}\sigma({\bf x})\Big)^{1/p}\asymp  n^{-1/p},
\end{equation}
and the $L_{\infty}$ norm of $\varphi_j$ satisfies
  \begin{equation}\label{3.28}\|\varphi_j\|_{\infty}=\sup_{{\bf x}\in{\rm c}({\bf x}_j,\frac1N)}|\varphi(Nd_{\mathbb S}\left({\bf x},{\bf x}_j)\right)|=\max_{x\in[0,1]}|\varphi(x)|\asymp1.
\end{equation}
\end{itemize}

Now we set
$$F_0:=\Big\{f_{\boldsymbol\az}:=\sum\limits_{j=1}^{4n}
\az_j\varphi_j:\,
{\boldsymbol\az}=(\az_1,\cdots,\az_{4n})\in\mathbb{R}^{4n}\Big\}.$$
Then it follows from \eqref{3.26}, \eqref{3.27}, and \eqref{3.28} that for any $f_{\boldsymbol\az}\in F_0$,
\begin{equation}\label{3.29}\|f_{\boldsymbol\az}\|_{p,w_{\boldsymbol\kappa}}\asymp
 n^{-1/p}\,\|{\boldsymbol\az}\|_{\ell_p^{4n}},\ 1\le p\le \infty, \end{equation}
where $$\|{\boldsymbol\az}\|_{\ell_p^{4n}}:=\Bigg\{
\begin{aligned}&\Big(\sum_{j=1}^{4n}|\az_j|^p\Big)^{1/p},&&1\leq
p<\infty,
\\
&\max\limits_{1\le j\le
4n}|\az_j|,
&&p=\infty.\end{aligned}$$Furthermore, we have the following lemma.

\begin{lem}\label{lem44} Let $1\le p\le\infty$, $0<\gamma\le\infty$, $\Theta_1(t)$, $\Theta(t)=t^r\Theta_1(t)\in\Phi_s^*$, and $f_{\boldsymbol\az}\in F_0$ with $\boldsymbol\az=(\az_1,\dots,\az_{4n})\in\mathbb R^{4n}$.
\begin{itemize}
  \item[(i)] If $w_{\boldsymbol\kappa}$ is a Dunkl weight on $\mathbb S^d$, then for any $r>0$,
\begin{align}\label{3.30}\|f_{{\boldsymbol\az}}\|_{W_{p,w_{\boldsymbol\kappa}}^r}\lesssim
n^{r/d-1/p}\,\|{\boldsymbol\az}\|_{\ell_p^{4n}}.
\end{align}When $w_{\boldsymbol\kappa}=1$, the above inequality
also holds.
  \item[(ii)] If $w_{\boldsymbol\kappa}$ is a product weight on $\mathbb S^d$, then for any $r>0$,
\begin{align}\label{3.31}\|f_{{\boldsymbol\az}}\|_{B_\gamma^\Theta(L_{p,w_{\boldsymbol\kappa}})}\lesssim \Theta(n^{-1/d})^{-1}n^{-1/p}\,\|{\boldsymbol\az}\|_{\ell_p^{4n}},
\end{align}\end{itemize}
\end{lem}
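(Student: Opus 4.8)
The plan is to reduce both bounds to the single geometric fact, already built into the construction, that each $\varphi_j$ is a $C^\infty$ bump concentrated at scale $1/N$ with $N\asymp n^{1/d}$, that the supports ${\rm supp}(\varphi_j)$ are pairwise disjoint and contained in $\mathbb S^d\setminus\widetilde E$, and that $w_{\boldsymbol\kappa}\asymp 1$ there. Combined with \eqref{3.29}, this already controls the $L_{p,w_{\boldsymbol\kappa}}$ term, so the whole issue is to control the smoothness term.

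For (i) I would first estimate integer powers of the Dunkl--Laplace--Beltrami operator. Applying the formula for $\Delta_{{\boldsymbol\kappa},0}$ to $\varphi_j$, on ${\rm supp}(\varphi_j)$ one has $\langle{\bf x},{\bf v}_i\rangle\asymp 1$ and, crucially, $\sigma_{{\bf v}_i}{\bf x}\notin{\rm supp}(\varphi_j)$ (the support lies at distance $>\varepsilon_{d,m}$ from each wall), so the difference terms collapse, while the local differential part applied to $\varphi(N d_{\mathbb S}(\cdot,{\bf x}_j))$ produces at most two derivatives of $\varphi$, each contributing a factor $N$ by the chain rule. Hence $\Delta_{{\boldsymbol\kappa},0}\varphi_j$ is supported on the finite $G$-orbit of ${\rm c}({\bf x}_j,1/N)$ with sup-norm $\lesssim N^2$, and iterating gives $\|\Delta_{{\boldsymbol\kappa},0}^k\varphi_j\|_\infty\lesssim N^{2k}$ on an orbit of measure $\lesssim N^{-d}$. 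Because $w_{\boldsymbol\kappa}$ is $G$-invariant (this is where the Dunkl, as opposed to merely product, hypothesis enters) the weight is $\asymp 1$ on the whole orbit, so $\|\Delta_{{\boldsymbol\kappa},0}^k\varphi_j\|_{p,w_{\boldsymbol\kappa}}\lesssim N^{2k}n^{-1/p}$. Summing over $j$ with the bounded-overlap property of the orbits (at most $|G|$ orbits meet any point) yields $\|\Delta_{{\boldsymbol\kappa},0}^k f_{\boldsymbol\az}\|_{p,w_{\boldsymbol\kappa}}\lesssim N^{2k}n^{-1/p}\|\boldsymbol\az\|_{\ell_p^{4n}}$. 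To pass from integer $k$ to the fractional power I would fix $k$ with $2k>r$ and use a Littlewood--Paley decomposition together with Bernstein's inequality \eqref{2.1}: on each dyadic block $\|(-\Delta_{{\boldsymbol\kappa},0})^{r/2}\Phi_\ell(f_{\boldsymbol\az})\|_{p,w_{\boldsymbol\kappa}}\lesssim 2^{\ell r}\|\Phi_\ell(f_{\boldsymbol\az})\|_{p,w_{\boldsymbol\kappa}}$, bounding $\|\Phi_\ell(f_{\boldsymbol\az})\|_{p,w_{\boldsymbol\kappa}}$ by $\|f_{\boldsymbol\az}\|_{p,w_{\boldsymbol\kappa}}$ for $2^\ell\lesssim N$ and by $2^{-2k\ell}\|\Delta_{{\boldsymbol\kappa},0}^k f_{\boldsymbol\az}\|_{p,w_{\boldsymbol\kappa}}$ for $2^\ell\gtrsim N$. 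Both dyadic sums are dominated by the frequency $2^\ell\asymp N$, producing $N^r n^{-1/p}\|\boldsymbol\az\|_{\ell_p^{4n}}=n^{r/d-1/p}\|\boldsymbol\az\|_{\ell_p^{4n}}$, which with \eqref{3.29} gives \eqref{3.30}.

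For (ii) the weight is only a product weight and need not carry any reflection-group structure, so $\Delta_{{\boldsymbol\kappa},0}$ is unavailable and the argument must stay purely approximation-theoretic. Working from the definition of the Besov quasi-norm, I would split the series at $2^j\asymp N$. For $2^j\le N$ I use the trivial bound $E_{2^j}(f_{\boldsymbol\az})_{p,w_{\boldsymbol\kappa}}\le\|f_{\boldsymbol\az}\|_{p,w_{\boldsymbol\kappa}}\asymp n^{-1/p}\|\boldsymbol\az\|_{\ell_p^{4n}}$. For $2^j\ge N$ I would invoke the weighted Jackson inequality for doubling weights, $E_{2^j}(f)_{p,w_{\boldsymbol\kappa}}\lesssim\omega_k(f,2^{-j})_{p,w_{\boldsymbol\kappa}}$, and bound the $k$-th weighted modulus of smoothness of the bump by $\omega_k(f_{\boldsymbol\az},t)_{p,w_{\boldsymbol\kappa}}\lesssim(Nt)^k\,n^{-1/p}\|\boldsymbol\az\|_{\ell_p^{4n}}$ for $0<t\le 1/N$, which again rests on the chain-rule scaling of the differences of $\varphi$ and on $w_{\boldsymbol\kappa}\asymp 1$ on a fixed neighbourhood of the disjoint supports. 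Choosing $k>s$ and inserting these two estimates, the sum $\big(\sum_j(E_{2^j}(f_{\boldsymbol\az})_{p,w_{\boldsymbol\kappa}}/\Theta(2^{-j}))^\gamma\big)^{1/\gamma}$ becomes an almost-geometric sum in $j$ which, by the properties of $\Phi_s^*$ (the factor $\Theta(t)/t^{\alpha_1}$ almost increasing controls the range $2^j\le N$, while $\Theta(t)/t^{\alpha_2}$ almost decreasing together with $k>r$ controls the range $2^j\ge N$), is dominated by the single term at $2^j\asymp N$, i.e.\ by $\Theta(N^{-1})^{-1}n^{-1/p}\|\boldsymbol\az\|_{\ell_p^{4n}}$. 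Since $N\asymp n^{1/d}$ and $\Theta_1\in\Phi_s^*$ give $\Theta(N^{-1})\asymp\Theta(n^{-1/d})$, this yields \eqref{3.31}.

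I expect the main obstacle to be part (i): unlike the product-weight Besov estimate, one must here handle the genuinely nonlocal difference terms of the Dunkl operators and the fractional power $(-\Delta_{{\boldsymbol\kappa},0})^{r/2}$ simultaneously. The resolution is the observation that the construction places every support strictly away from the reflecting hyperplanes, so that on each support the difference quotients either vanish or have $\asymp 1$ denominators, and that $G$-invariance of $w_{\boldsymbol\kappa}$ lets one transport the weighted norm across the orbit; the fractional power is then tamed by the standard Littlewood--Paley/Bernstein device. For (ii) the analogous difficulty --- the absence of a differential-difference operator for a general product weight --- is what forces the modulus-of-smoothness route, and verifying the bound $\omega_k(f_{\boldsymbol\az},t)_{p,w_{\boldsymbol\kappa}}\lesssim(Nt)^k n^{-1/p}\|\boldsymbol\az\|_{\ell_p^{4n}}$ is the only delicate point there.
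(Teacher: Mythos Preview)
Your outline for part (i) is essentially the paper's argument up to the integer-power estimate: the same support analysis on the $G$-orbit, the same bounded-overlap count, and the same $N^{2k}$ scaling. The only substantive difference is how you pass from integer $k$ to fractional $r/2$. You propose a Littlewood--Paley/Bernstein dyadic argument; the paper instead invokes the Kolmogorov-type inequality $\|(-\Delta_{\boldsymbol\kappa,0})^{r/2}f\|_{p,w_{\boldsymbol\kappa}}\le\|(-\Delta_{\boldsymbol\kappa,0})^{v}f\|_{p,w_{\boldsymbol\kappa}}^{r/(2v)}\|f\|_{p,w_{\boldsymbol\kappa}}^{1-r/(2v)}$ from \cite{Di}, which turns the step into one line. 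Both work; the paper's is shorter. One minor point: your phrase ``the difference terms collapse'' is imprecise --- on ${\rm supp}(\varphi_j)$ they reduce to $\varphi_j({\bf x})/\langle{\bf x},{\bf v}_i\rangle$ with $|\langle{\bf x},{\bf v}_i\rangle|\asymp 1$, and the reflected contributions on $\sigma_{{\bf v}_i}({\rm supp}(\varphi_j))$ are handled by the same bound since $G$-invariance of the root system makes $\widetilde E$ itself $G$-invariant.

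For part (ii) your route is genuinely different and more laborious than necessary. You propose to use a weighted Jackson inequality via the weighted modulus of smoothness and then establish $\omega_k(f_{\boldsymbol\alpha},t)_{p,w_{\boldsymbol\kappa}}\lesssim(Nt)^k n^{-1/p}\|\boldsymbol\alpha\|_{\ell_p^{4n}}$ --- a plausible bound, but one that requires unpacking the weighted modulus for doubling weights. The paper avoids this entirely with a single observation: since $|\langle{\bf x},{\bf v}_j\rangle|\le 1$ and $\kappa_j\ge 0$, one has $w_{\boldsymbol\kappa}\le 1$ pointwise, hence $E_{2^j}(f_{\boldsymbol\alpha})_{p,w_{\boldsymbol\kappa}}\le E_{2^j}(f_{\boldsymbol\alpha})_{p}$. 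Then the \emph{unweighted} Jackson inequality \eqref{2.2} with an integer $v>s$ gives $E_{2^j}(f_{\boldsymbol\alpha})_{p}\lesssim 2^{-jv}\|f_{\boldsymbol\alpha}\|_{W_p^v}$, and part (i) applied with $\boldsymbol\kappa=0$ bounds $\|f_{\boldsymbol\alpha}\|_{W_p^v}\lesssim N^{v-d/p}\|\boldsymbol\alpha\|_{\ell_p^{4n}}$. This reduction to the unweighted Sobolev scale is both simpler and explains why the statement singles out the unweighted case in (i). Your low-frequency estimate ($2^j\le N$) and the $\Phi_s^*$ summation mechanics match the paper's exactly.
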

\begin{proof} (i) By \eqref{3.29} it suffices to show that for all $r>0$,
\begin{equation}\label{3.32}
\|\left(-\Delta_{{\boldsymbol\kappa},0}\right)^{r/2}f_{\boldsymbol\alpha}\|_{p,w_{\boldsymbol\kappa}}\le N^{r-d/p}\|{\boldsymbol\az}\|_{\ell_p^{4n}}.
\end{equation}

First we claim that for any positive integer $v$,
\begin{equation}\label{3.33}
\left\| \left(-\Delta_{{\boldsymbol\kappa},0}\right)^vf_{\boldsymbol\az}\right\|_{p,w_{\boldsymbol\kappa}}\lesssim N^{2v-d/p}\|{\boldsymbol\az}\|_{\ell_p^{4n}}.
\end{equation}
Indeed, if $f_{\boldsymbol\az}\in F_0$ then we have for any $v=1,2,\dots$,
\begin{equation}\label{3.34}
  \left(-\Delta_{{\boldsymbol\kappa},0}\right)^vf_{\boldsymbol\az}({\bf x})=\sum_{j=1}^{4n}\alpha_j\left(-\Delta_{{\boldsymbol\kappa},0}\right)^v\varphi_j({\bf x}).
\end{equation}
We will prove \eqref{3.33} by four steps.

Step 1. We shall show that for any $v=1,2,\dots$,
\begin{equation}\label{3.35}
{\rm supp}\left(-\Delta_{{\boldsymbol\kappa},0}\right)^v\varphi_j\subset \bigcup_{\rho\in G}\rho\Big({\rm c}\big({\bf x}_j,\frac1N\big)\Big),\ j=1,\dots, 4n,
\end{equation}
where $G$ is the finite reflection group generated by $\mathcal{R}_+$ and $\rho(E):=\{\rho {\bf x}:{\bf x}\in E\}$.
Note that if $f\in C_c(\mathbb R^d)$, then by the definition of $\mathcal{D}_j$ we have
$${\rm supp}\left(\mathcal{D}_jf\right)\subset \bigcup_{\rho\in G}\rho\left({\rm supp}(f)\right),\ j=1,\dots, d,$$
Combining with the fact that the set $\bigcup_{\rho\in G}\rho(E)$ is invariant under the action of the
group $G$, it implies that
$${\rm supp}\left(\left(-\Delta_{{\boldsymbol\kappa},0}\right)^vf\right)\subset \bigcup_{\rho\in G}\rho\left({\rm supp}(f)\right),\ v=1,2,\dots.$$
By the definition of the Dunkl-Laplacian-Beltrami operator, for
any $\boldsymbol\xi\in\mathbb S^{d}$, we have
$$\Delta_{{\boldsymbol\kappa},0}\varphi_j({\boldsymbol\xi})=\Delta_{\boldsymbol\kappa}(\varphi_j({\bf x}))\Big|_{{\bf x}={\boldsymbol\xi}},$$
which implies that
\begin{equation}\label{3.35-0}
{\rm supp}\left(\Delta_{{\boldsymbol\kappa},0}\varphi_j\right)\subset \bigcup_{\rho\in G}\rho\Big({\rm c}({\bf x}_j,\frac1N)\Big),\ j=1,\dots, 4n.
\end{equation}
This leads to \eqref{3.35}.

Step 2. By the conclusion of Step 1, we get that for any ${\bf x}\in \mathbb S^d$,
\begin{align}\label{3.36}
  &\quad\#\left\{1\le j\le4n:\,\alpha_j\left(-\Delta_{{\boldsymbol\kappa},0}\right)^v\varphi_j({\bf x})\neq0\right\}\notag
  \\&\le\sum_{j=1}^{4n}\chi_{\bigcup_{\rho\in G}\rho\left({\rm c}\left({\bf x}_j,\frac1N\right)\right)}({\bf x})
  \le\sum_{j=1}^{4n}\sum_{\rho\in G}\chi_{\rho\left({\rm c}\left({\bf x}_j,\frac1N\right)\right)}({\bf x})\notag
  \\&=\sum_{\rho\in G}\sum_{j=1}^{4n}\chi_{\rho\left({\rm c}\left({\bf x}_j,\frac1N\right)\right)}({\bf x})\le \sum_{\rho\in G}1=\#G,
\end{align} where in the last inequality we used the pairwise disjoint property of
 $\left\{\rho\left({\rm c}\left({\bf x}_j,1/N\right)\right)\right\}_{j=1}^{4n}$ fro any $\rho\in G$.

Step 3. It is easy to verify that for any $1\le j\le 4n$ and $v=1,2,\dots$,
$$\left\|\left(-\Delta_{\boldsymbol\kappa}\right)^v\varphi_j\right\|_\infty\lesssim N^{2v},\,$$
which leads to
\begin{equation}\label{3.37}
\left\|\left(-\Delta_{{\boldsymbol\kappa},0}\right)^v\varphi_j\right\|_\infty\lesssim N^{2v}.
\end{equation}It follows from \eqref{3.35-0} and \eqref{3.37} that for $1\le p\le \infty$,
\begin{equation}\label{3.38}
\left\|\left(-\Delta_{{\boldsymbol\kappa},0}\right)^v\varphi_j\right\|_{p,w_{\boldsymbol\kappa}}\lesssim N^{2v-d/p},\ v=1,2,\dots.
\end{equation}

Step 4.
By \eqref{3.34}, \eqref{3.36}, and \eqref{3.38} we obtain
\begin{align*}
  \left\| \left(-\Delta_{{\boldsymbol\kappa},0}\right)^vf_{\boldsymbol\az}\right\|_{p,w_{\boldsymbol\kappa}}
  &\le\left(\int_{\mathbb S^d}\Big|\sum_{j=1}^{4n}\alpha_j\left(-\Delta_{{\boldsymbol\kappa},0}\right)^v\varphi_j({\bf x})\Big|^p{\rm d}\sigma({\bf x})\right)^{1/p}
  \\&\le (\#G)^{1-1/p}\left(\int_{\mathbb S^d}\sum_{j=1}^{4n}|\alpha_j|^p\left|\left(-\Delta_{{\boldsymbol\kappa},0}\right)^v\varphi_j({\bf x})\right|^p{\rm d}\sigma({\bf x})\right)^{1/p}
  \\&\lesssim N^{2v-d/p}\|{\boldsymbol\az}\|_{\ell_p^{4n}}.
\end{align*}This proves \eqref{3.33}.

Now we turn to prove \eqref{3.32}. By the Kolmogorov-type
inequality (see \cite[Theorem 8.1]{Di}), \eqref{3.33} and
\eqref{3.29} that for $v>r$ with $r\in \mathbb N$,
\begin{align*}
  \|\left(-\Delta_{{\boldsymbol\kappa},0}\right)^{r/2}f_{\boldsymbol\alpha}\|_{p,w_{\boldsymbol\kappa}}
  \le \left\|\left(-\Delta_{{\boldsymbol\kappa},0}\right)^{v}f_{\boldsymbol\alpha}\right\|_{p,w_{\boldsymbol\kappa}}^{\frac{r}{2v}}\left\|f_{\boldsymbol\alpha}\right\|_{p,w_{\boldsymbol\kappa}}^{\frac{2v-r}{2v}}
  \lesssim N^{r-d/p}\|{\boldsymbol\az}\|_{\ell_p^{4n}}.
\end{align*}
Hence, the proof of (i) is completed.

(ii) Since $B_\gamma^\Theta(L_{p,w_{\boldsymbol\kappa}})$ is continuously embedded into
$B_\infty^\Theta(L_{p,w_{\boldsymbol\kappa}})$ for $0<\gamma<\infty$, it
only needs to show \eqref{3.31} for $B_\gamma^\Theta(L_{p,w_{\boldsymbol\kappa}})$,
$0<\gamma<\infty$. Notice that $E_{2^j}(f_{\boldsymbol\az})_{p,w_{\boldsymbol\kappa}}\le
\|f_{\boldsymbol\az}\|_{p,w_{\boldsymbol\kappa}}$ for $j\ge0$. Thus,
 we have
\begin{align}\label{3.43}
  \sum\limits_{2^j<N}\Big(\frac{ E_{2^j}(f_{\boldsymbol\az})_{p,w_{\boldsymbol\kappa}}}{\Theta(2^{-j})}\Big)^\gamma
  &\le\sum\limits_{2^j<N}\Big(\frac1{\Theta(2^{-j})}\Big)^\gamma\|f_{\boldsymbol\az}\|_{p,w_{\boldsymbol\kappa}}^\gamma.
\end{align}Similar to \eqref{3.5-000}, we get $$\sum\limits_{2^j<N}\Big(\frac1{\Theta(2^{-j})}\Big)^\gamma\lesssim \Theta(N^{-1})^{-\gamma}.$$
Together with \eqref{3.29} we have
\begin{align}\label{3.43}
  \sum\limits_{2^j<N}\Big(\frac{ E_{2^j}(f_{\boldsymbol\az})_{p,w_{\boldsymbol\kappa}}}{\Theta(2^{-j})}\Big)^\gamma
  &\lesssim \Theta(N^{-1})^{-\gamma}\|f_{\boldsymbol\az}\|_{p,w_{\boldsymbol\kappa}}^\gamma\notag
  \\&\asymp \Theta(N^{-1})^{-\gamma}N^{-d\gamma/p}\|{\boldsymbol\az}\|_{\ell_p^{4n}}^\gamma.
\end{align}
Choose a positive integer $\upsilon>s$.  By the fact that
$\|f\|_{p,w_{\boldsymbol\kappa}}\le \|f\|_p$ and \eqref{2.2} we obtain
 $$E_{2^j}(f_{\boldsymbol\az})_{p,w_{\boldsymbol\kappa}}\le E_{2^j}(f_{\boldsymbol\az})_{p}\lesssim 2^{-j\upsilon}\|f_{\boldsymbol\az}\|_{W_p^\upsilon},\  j\ge0,$$ which combining with \eqref{3.30} for $W_p^\upsilon$, we get
 \begin{align}\label{3.44}
  \sum\limits_{2^j\ge N}\Big(\frac{ E_{2^j}(f_{\boldsymbol\az})_{p,w_{\boldsymbol\kappa}}}{\Theta(2^{-j})}\Big)^\gamma&
  \le\sum\limits_{2^j\ge N}\Big(\frac{2^{-j\upsilon}}{\Theta(2^{-j})}\Big)^\gamma\|f_{\boldsymbol\az}\|_{W_p^\upsilon}^\gamma\nonumber
  \\&\le \Theta(N^{-1})^{-\gamma}N^{-s\gamma}\sum\limits_{2^j\ge N}2^{j(s-\upsilon)\gamma}\|f_{\boldsymbol\az}\|_{W_p^\upsilon}^\gamma\nonumber
  \\&\asymp\Theta(N^{-1})^{-\gamma}N^{-\upsilon\gamma}\|f_{\boldsymbol\az}\|_{W_p^\upsilon}^\gamma\notag
  \\&\lesssim \Theta(N^{-1})^{-\gamma}N^{-d\gamma/p}\|{\boldsymbol\az}\|_{\ell_p^{4n}}^\gamma,
\end{align}where in the second inequality we used
$$\frac{\Theta(N^{-1})}{N^{-s}}\lesssim \frac{\Theta(2^{-j})}{2^{-js}},\ 2^j\ge N,$$since ${\Theta(t)}/{t^s}$ is almost decreasing.
It follows from \eqref{3.29}, \eqref{3.43}, and \eqref{3.44} that
 \begin{align*}
 \|f_{\boldsymbol\az}\|_{B_\gamma^\Theta(L_{p,w_{\boldsymbol\kappa}})}&:=\|f_{\boldsymbol\az}\|_{p,w_{\boldsymbol\kappa}}+\Big( \sum\limits_{j=0}^\infty\Big(\frac{ E_{2^j}(f_{\boldsymbol\az})_{p,w_{\boldsymbol\kappa}}}{\Theta(2^{-j})}\Big)^\gamma\Big)^{1/\gamma}
 \lesssim \Theta(N^{-1})^{-1}N^{-d/p}\|{\boldsymbol\az}\|_{\ell_p^{4n}}.
\end{align*}

This completes the  proof of Lemma \ref{lem44}.
\end{proof}

\begin{proof}[Proof of lower estimates of \eqref{3.23}.]\

The proof will be divided into two cases.

{\it Case 1.} $2\le p\le \infty$. In this case, by the fact that $W_{2,w_{\boldsymbol\kappa}}^r$ is continuously embedded into $W_{p,w_{\boldsymbol\kappa}}^r$, it suffices to consider the case $p=\infty$.
Hence, what's left to show is
\begin{equation}\label{5.5}e_n^{\rm ran}\left({\rm Int}_{\mathbb S^{d},w_{\boldsymbol\kappa}};BW_{\infty,w_{\boldsymbol\kappa}}^r\right)\gtrsim n^{-r/d-1/2}.\end{equation}
Indeed, if $p=\infty$, then it follows from \eqref{3.30} that  for
$\az_j\in\{-1,1\}, \, j=1,\dots, 4n,$
$$\|f_{\boldsymbol\az}\|_{W_{\infty,w_{\boldsymbol\kappa}}^r}\lesssim n^{r/d}\,\|{\boldsymbol\az}\|_{\ell_\infty^{4n}}\lesssim n^{r/d}.$$
Hence, there exists a positive constant $C_1$  independent of $n$
such that $C_1n^{-r/d}f_{\boldsymbol\az}\in
BW_{\infty,w_{\boldsymbol\kappa}}^r$. Set
$$\psi_j({\bf x}):=C_1n^{-r/d}\varphi_j({\bf x}),\ j=1,\dots,4n.$$
Clearly, we have
$$F_1:=\Big\{\sum\limits_{j=1}^{4n}
\az_j\psi_j:\,\alpha_j\in\{-1,1\},\ j=1,\dots,4n\Big\}\subset
BW_{\infty,w_{\boldsymbol\kappa}}^r,$$ and  by \eqref{3.27},
$${\rm Int}_{\mathbb S^d,w_{\boldsymbol\kappa}}(\psi_j)=\int_{\mathbb S^d}\psi_j({\bf x})w_{\boldsymbol\kappa}({\bf x}){\rm d}\sigma({\bf x})=C_1 n^{-r/d}\,\|\varphi_j\|_{1,w_{\boldsymbol\kappa}}\asymp n^{r/d-1}.$$
Applying Lemma \ref{lem3.5} (a) we obtain \eqref{5.5}.

{\it Case 2.} $1\le p<2$. It follows from \eqref{3.29} that
$$\|\pm \varphi_j\|_{W_{p,w_{\boldsymbol\kappa}}^r}\lesssim
n^{r/d-1/p}.$$Hence, there exists a positive constant $C_2$
independent of $n$ such that
$$\psi_j({\bf x}):=C_2n^{-r/d+1/p}\varphi_j({\bf x})\in BW_{p,w_{\boldsymbol\kappa}}^r,\ j=1,\dots,4n.$$
Clearly, we have$$F_2:=\{\pm\psi_j:\ j=1,\dots,4n\}\subset
BW_{p,w_{\boldsymbol\kappa}}^r,$$and  by \eqref{3.27},
$${\rm Int}_{\mathbb S^d,w_{\boldsymbol\kappa}}(\psi_j)=\int_{\mathbb S^d}\psi_j({\bf x})w_{\boldsymbol\kappa}({\bf x}){\rm d}\sigma({\bf x})=C_2 n^{-r/d+1/p}\|\varphi_j\|_{1,w_{\boldsymbol\kappa}}\asymp n^{-r/d+1/p-1}.$$
Applying Lemma \ref{lem3.5} (b), we obtain for $1\le p<2$,
\begin{equation*}e_n^{\rm ran}\left({\rm Int}_{\mathbb S^{d},w_{\boldsymbol\kappa}};BW_{p,w_{\boldsymbol\kappa}}^r\right)\gtrsim n^{-r/d+1/p-1},\end{equation*} which, together with
\eqref{5.5}, gives  the lower bounds of $e_{n}^{\rm
ran}(BW_{p,w_{\boldsymbol\kappa}}^r)$ for $1\le p\le\infty$.

The proof of lower estimates of \eqref{3.23} is completed.
\end{proof}

\begin{proof}[Proof of lower estimates of \eqref{3.25}.] \

The proof will be divided into two cases.

{\it Case 1.} $2\le p\le \infty$. In this case, we want to prove
$$e_n^{\rm ran}({\rm Int}_{\mathbb S^{d},w_{\boldsymbol\kappa}};BB_\gamma^\Theta(L_{p,w_{\boldsymbol\kappa}}))\gtrsim \Theta\left(n^{-1/d}\right)n^{-1/2}.$$
It suffices to consider the case $p=\infty$ since the embedding relation $$B_\gamma^\Theta(L_\infty)\hookrightarrow B_\gamma^\Theta(L_{p,w_{\boldsymbol\kappa}}),\quad2\le p<\infty.$$
Hence, what's left to show is
\begin{equation}\label{3.47}e_n^{\rm ran}({\rm Int}_{\mathbb S^{d},w_{\boldsymbol\kappa}};BB_\gamma^\Theta(L_\infty))\gtrsim \Theta\left(n^{-1/d}\right)n^{-1/2}.\end{equation}
Indeed, if $p=\infty$, then
it follows from \eqref{3.31} that  when $\az_j\in\{-1,1\}, \,
j=1,\dots, 4n,$
$$\|f_{\boldsymbol\az}\|_{B_\gamma^\Theta(L_\infty)}\lesssim \Theta\left(n^{-1/d}\right)^{-1}\,\|{\boldsymbol\az}\|_{\ell_\infty^{4n}}\lesssim \Theta\left(n^{-1/d}\right)^{-1}.$$
Hence, there exists a positive constant $C_1$ independent of $n$
such that
$C_1\Theta\left(n^{-1/d}\right)^{-1}f_{\boldsymbol\az}\in
BB_\gamma^\Theta(L_\infty)$. Set
$$\psi_j({\bf x}):=C_1\Theta\left(n^{-1/d}\right)\varphi_j({\bf x}),\ j=1,\dots,4n.$$
Then we have
$$F_1:=\Big\{\sum\limits_{j=1}^{4n}
\az_j\psi_j:\,\alpha_j\in\{-1,1\},\
j=1,\dots,4n\Big\}\subset BB_\gamma^\Theta(L_\infty).$$Meanwhile, it follows from
\eqref{3.27} that
$${\rm Int}_{\mathbb S^d,w_{\boldsymbol\kappa}}(\psi_j)=\int_{\mathbb S^d}\psi_j({\bf x})w_{\boldsymbol\kappa}({\bf x}){\rm d}\sigma({\bf x})=C_1 \Theta\left(n^{-1/d}\right)\,\|\varphi_j\|_{1,w_{\boldsymbol\kappa}}\asymp \Theta\left(n^{-1/d}\right)n^{-1}.$$
Applying Lemma \ref{lem3.5} (a) we obtain \eqref{3.47}.

{\it Case 2.} $1\le p<2$. In this case, we want to prove
\begin{equation}\label{5.5-5}
e_n^{\rm ran}({\rm Int}_{\mathbb S^{d},w_{\boldsymbol\kappa}};BB_\gamma^\Theta(L_{p,w_{\boldsymbol\kappa}}))\gtrsim \Theta\left(n^{-1/d}\right)n^{1/p-1}.
\end{equation}
Indeed, it follows from \eqref{3.31} that $$\|\pm
\varphi_j\|_{B_\gamma^\Theta(L_{p,w_{\boldsymbol\kappa}})}\lesssim
\Theta\left(n^{-1/d}\right)n^{-1/p}.$$Hence, there exists a
positive constant $C_2$ independent of $n$ such that
$$\psi_j({\bf x}):=C_2\Theta\left(n^{-1/d}\right)n^{1/p}\varphi_j({\bf x})\in BB_\gamma^\Theta(L_{p,w_{\boldsymbol\kappa}}),\ j=1,\dots,4n.$$
Then we have$$F_2:=\{\pm\psi_j:\ j=1,\dots,4n\}\subset
BB_\gamma^\Theta(L_{p,w_{\boldsymbol\kappa}}),$$and by
\eqref{3.27},
$${\rm Int}_{\mathbb S^d,w_{\boldsymbol\kappa}}(\psi_j)=\int_{\mathbb S^d}\psi_j({\bf x})w_{\boldsymbol\kappa}({\bf x}){\rm d}\sigma({\bf x})=C_2 \Theta\left(n^{-1/d}\right)n^{1/p}\|\varphi_j\|_{1,w_{\boldsymbol\kappa}}\asymp \Theta\left(n^{-1/d}\right)n^{1/p-1}.$$
Applying Lemma \ref{lem3.5} (b) we obtain \eqref{5.5-5}.

This completes the proof.
\end{proof}

\section{Extensions to the unit ball and simplex}\label{sect4}

In this section, we will extend the results of the previous
sections to the unit ball and standard simplex. For this purpose,
we shall describe briefly some necessary notations and results on
these domains. Unless otherwise stated, most of the results
described in this section can be found in the paper \cite{DW} and
the book \cite{DX}.
\subsection{Weights functions on \texorpdfstring{$\Omega^d$}.}

Let $\Oz^d$ be either the unit ball $$\mathbb B^{d}:=\{{\bf x}\in \mathbb{R}^{d}:\,\|{\bf x}\|^2=x_1^2+\dots+x_{d}^2\le1\}$$ or the simplex
$$\mathbb T^{d}:=\{{\bf x}\in \mathbb{R}^{d}:\,|{\bf x}|:=x_1+\dots+x_d\le1,\ x_1,\dots,x_d\ge0\}$$ and let ${\rm d}{\bf x}$ be the usual Lebesgue measure on $\Oz^d$ normalized by $\int_{\Oz^d}{\rm d}{\bf x}=1$.
For simplicity, we denote ${\rm meas}(E):=\int_{ E}{\rm d}{\bf x}$ of a subset $E$ of $\Oz^d$.
The \emph{distance }of two points ${\bf x}=(x_1,\dots,x_d),\,{\bf y}=(y_1,\dots,y_d)\in \Oz^d$ is defined by
$$d_{\Oz}({\bf x},{\bf y}):=\Bigg\{
\begin{aligned}&\arccos\Big(\langle{\bf x}, {\bf y}\rangle+\sqrt{1-\|{\bf x}\|^2}\sqrt{1-\|{\bf y}\|^2}\Big),&&{\rm if}\ \Oz^d=\mathbb B^d,
\\
&\arccos\Big(\sum_{j=1}^d\sqrt{x_j\,y_j}+\sqrt{1-|{\bf x}|}\sqrt{1-|{\bf y}|}\Big),&&{\rm if}\ \Oz^d=\mathbb T^d.\end{aligned}$$
And we denote by $C_{\Oz}({\bf x},\tz)$ the ball centred at ${\bf x}\in \Oz^d$ and of radius $\tz>0$ in the metric space $(\Oz^d,d_{\Oz})$, that is
$$
C_{\Oz}({\bf x},\tz):=\{{\bf y}\in \Oz^d:\ d_{\Oz}({\bf x},{\bf y})\le\tz\}.
$$

A weight function $w$ on $\Oz^d$ is called \emph{a doubling weight} if
$$\sup\limits_{{\bf x}\in \Oz^d,\,\tz>0}\frac{w(C_{\Oz}({\bf x},2\tz))}{w(C_{\Oz}({\bf x},\tz))}=:L_w<\infty,$$where $w({ E})
:=\int_{ E}w({\bf y}){\rm d}{\bf y}$ for a measurable subset ${
E}$ of $\Oz^d$; and it is call an \emph{$A_\infty$ weight} if
there exists $\beta\ge1$ such that
$$\frac{w(C_{\Oz}({\bf x},\tz))}{w({ E})}\le\beta\left(\frac{{\rm meas}(C_{\Oz}({\bf x},2\tz))}{{\rm meas}({ E})}\right)^\beta,$$
for every measurable subset ${ E}$ of an arbitrarily given ball
$C_{\Oz}({\bf x},\tz)$ in $\Oz^d$.

Integrations on $\Oz^d$ are closely related to integrations on the sphere $\mathbb S^d$. Given a real function $f$ on $\Oz^d$, for any $\bar{{\bf x}}=({\bf x},x_{d+1})=(x_1,\dots,x_{d+1})\in \mathbb S^d$, we define
\begin{equation}\label{4.1}
T_{\Oz}f(\bar{\bf x}):=\Bigg\{
\begin{aligned}&\frac12f({\bf x})|x_{d+1}|,&&{\rm if}\ \Oz^d=\mathbb B^d,
\\
&\frac12f({\bf x}^2)\prod_{j=1}^{d+1}x_j,&&{\rm if}\ \Oz^d=\mathbb T^d,\end{aligned}
\end{equation}
and
\begin{equation}\label{4.1-0}
\psi(\bar{{\bf x}}):=\Big\{
\begin{aligned}&{\bf x},&&{\rm if}\ \Oz^d=\mathbb{B}^d,
\\
&{\bf x}^2,&&{\rm if}\ \Oz^d=\mathbb{T}^d,\end{aligned}
\end{equation}
where ${\bf x}^2=(x_1^2,\dots,x_d^2)$. Then we obtain
\begin{equation}\label{4.2}
\int_{\Oz^d}f({\bf x})w({\bf x}){\rm d}{\bf x}=\int_{\mathbb{S}^d}f\circ \psi(\bar{{\bf x}})\,T_{\Oz}w(\bar{{\bf x}}){\rm d}\sigma(\bar{{\bf x}}),
\end{equation}where $f\circ \psi(\bar{{\bf x}}):=f(\psi(\bar{{\bf x}}))$ for $\bar{\bf x}\in\mathbb S^d$.
By the relation \eqref{4.2} it has been proved in \cite[Theorem 2]{DW} that a weight function $w$ on $\Oz^d$ is a doubling weight (resp. an $A_\infty$ weight)
if and only if $T_{\Oz}w$ is a doubling weight (resp. an $A_\infty$ weight) on $\mathbb S^d$. Furthermore, if $w$ is a doubling weight
then $s_w>0$ satisfies
$$
\sup\limits_{{\bf y}\in \Oz^d,\,\tz>0}\frac{w(C_{\Oz}({\bf y},2^m\tz))}{w(C_{\Oz}({\bf y},\tz))}\le C_{L_w}2^{ms_w},\ m=1,2,\dots,
$$if and only if it satisfies
$$
\sup\limits_{{\bf x}\in\mathbb S^d,\,r>0}\frac{T_{\Oz}w({\rm c}({\bf x},2^mr))}{T_{\Oz}w({\rm c}({\bf x},r))}\le C_{L_w}'2^{ms_w},\ m=1,2,\dots.
$$

Given $\boldsymbol\kappa=(\kappa_1,\dots,\kappa_{m})\in[0,+\infty)^{m}$, $\mu\ge0$, and ${\bf v}_j\in\mathbb S^{d-1}$, $j=1,\dots,m$, let $h_{\boldsymbol\kappa}^2({\bf x})$ be given by \eqref{2.0} and be even in each of its variables if $\Oz^d=\mathbb T^d$. Consider the following \emph{product weights} on $\Oz^d$:
\begin{equation}\label{4.2-0}
w_{\boldsymbol\kappa,\mu}^{\Oz}({\bf x}):=\Bigg\{
\begin{aligned}&(1-\|{\bf x}\|^2)^{\mu-\frac12}h_{\boldsymbol\kappa}^2({\bf x}),&&{\rm if}\ \Oz^d=\mathbb B^d,
\\
&(1-|{\bf x}|)^{\mu-\frac12}h_{\boldsymbol\kappa}^2(\sqrt{\bf x})\prod_{j=1}^dx_j^{-\frac12},&&{\rm if}\ \Oz^d=\mathbb T^d,\end{aligned}
\end{equation}
for ${\bf x}=(x_1,\dots,x_d)\in\Omega^d$, where $\sqrt{{\bf x}}=(\sqrt{x_1},\dots,\sqrt{x_d})$ for ${\bf x}\in\mathbb T^d$. Clearly,
\begin{equation}\label{4.2-00}
T_{\Oz}w_{\boldsymbol\kappa,\mu}^{\Oz}(\bar{{\bf
x}})=|x_{d+1}|^{2\mu}\prod_{j=1}^{m}\left|\langle\bar{{\bf
x}},\bar{{\bf v}}_j\rangle\right|^{2\kappa_j}
=\prod_{j=1}^{m+1}\left|\langle\bar{{\bf x}},\bar{{\bf
v}}_j\rangle\right|^{2\kappa_j}=:h_{\boldsymbol\kappa,\mu}^2(\bar{\bf
x})=h_{\bar {\boldsymbol\kappa}}^2(\bar{\bf x}),
\end{equation}
for $\bar{{\bf x}}=({\bf x},x_{d+1})\in\mathbb S^d,$ where $\bar{{\bf v}}_{j}=({\bf v}_{j},0)\in\mathbb S^d$, $j=1,\dots,m$, $\bar{{\bf v}}_{m+1}={\bf e}_{d+1}$, and $\bar{\boldsymbol\kappa}=(\boldsymbol\kappa,\mu)$.
Thus, we conclude that $w_{\boldsymbol\kappa,\mu}^{\Oz}$ is an $A_\infty$ weight on $\Oz^d$.
Specially, if $\mathcal{R}:=\{\pm\bar {\bf v}_j\}_{j=1}^m$ forms a root system on $\mathbb R^{d+1}$ and $h_{\boldsymbol\kappa,\mu}^2$ is invariant under the reflection group $G$ generated by $\mathcal{R}$, then \eqref{4.2-0} is called \emph{Dunkl weight} on $\Omega^d$.
As an example, when $m=d$ and $\bar{{\bf v}}_{j}={\bf e}_j,j=1,\dots,d$, i.e.,
$$w_{\boldsymbol\kappa,\mu}^{\Oz}({\bf x}):=\Bigg\{
\begin{aligned}&(1-\|{\bf x}\|^2)^{\mu-\frac12}\prod_{j=1}^d|x_j|^{2\kappa_j},&&{\rm if}\ \Oz^d=\mathbb B^d,
\\
&(1-|{\bf x}|)^{\mu-\frac12}\prod_{j=1}^{d}x_j^{\kappa_j-\frac12},&&{\rm if}\ \Oz^d=\mathbb T^d,\end{aligned}$$by \eqref{2.000} and \eqref{4.2-00} we have
$$s_{w_{\boldsymbol\kappa,\mu}^{\Oz}}:=d+2|\boldsymbol\kappa|+2\mu-2\min\{\mu,\kappa_j,j=1,\dots,d\}.$$

\subsection{Function Spaces on \texorpdfstring{$\Omega^d$}.}

For $1\le p<\infty$, denote by $L_{p,w}(\Oz^d)$ the  weighted
Lebesgue space on $\Oz^d$ equipped with finite quasi-norm
$$\|f\|_{L_{p,w}(\Oz^d)}:=\left(\int_{\Oz^d}|f({\bf x})|^pw({\bf x}){\rm d}{\bf x}\right)^{1/p},$$
while in the case of $p=\infty$ we consider the space $C(\Oz^d)$
of continuous functions on $\Oz^d$ with uniform norm
$\|f\|_{L_{\infty}(\Oz^d)}:=\|f\|_{L_{\infty,\oz}(\Oz^d)}$.
 We denote by $\Pi_n(\Oz^d)$ the space of the restrictions to $\Oz^d$ of all real algebra polynomials in $d$ variables of degree no more than $n$.
 Note that $\dim \Pi_n(\Oz^d)\asymp n^d$.
 The best approximation of $f\in L_{p,w}(\Oz^d)$ from $\Pi_n(\Oz^d)$ in  $L_{p,w}(\Omega^d)$ by
$$E_n(f)_{L_{p,w}(\Oz^d)}:=\inf\limits_{g\in\Pi_n(\Oz^d)}\|f-g\|_{L_{p,w}(\Oz^d)}.$$

\subsubsection{Weighted Besov classes on \texorpdfstring{$\Omega^d$}.}

Let $\Theta\in\Phi_s^*$. For $1\le p\le\infty$ and $0<\gamma\le\infty$, we say $f\in B_{\gamma}^\Theta(L_{p,w}(\Oz^d))$ if $f\in L_{p,w}(\Omega^d)$  and
$$\|f\|_{B_{\gamma}^\Theta(L_{p,w}(\Oz^d))}:=\|f\|_{L_{p,w}(\Omega^d)}+\left\{\sum_{j=0}^\infty\left(\frac{E_{2^j}(f)_{L_{p,w}(\Omega^d)}}{\Theta(2^{-j})}\right)^\gamma\right\}^{1/\gamma}$$
is finite. Similar to the proof of Proposition \ref{thm2.1}, we can prove that if $\Theta_1(t)$, $\Theta(t)=t^r\Theta_1(t)\in\Phi_s^*$, and $r>(1/p-1/q)_+s_w$, then $B_\gamma^\Theta(L_{p,w}(\Oz^d))\hookrightarrow L_{q,w}(\Oz^d)$.

Using the relation \eqref{4.2} we have
\[
  E_n(f)_{L_{p,w}(\Oz^d)}=\Bigg\{
\begin{aligned}&E_{2n}(f\circ \psi)_{L_{p,T_\Oz w}(\mathbb S^d)},&&{\rm if}\ \Oz^d=\mathbb{B}^d,
\\
&E_n(f\circ \psi)_{L_{p,T_\Oz w}(\mathbb S^d)},&&{\rm if}\ \Oz^d=\mathbb{T}^d,\end{aligned}
\]which leads that for all $B_\gamma^\Theta(L_{p,w}(\Oz^d))$,
\begin{equation}\label{4.11}
\|f\|_{B_\gamma^\Theta(L_{p,w}(\Oz^d))}\asymp \|f\circ \psi\|_{B_\gamma^\Theta(L_{p,T_\Omega w}(\mathbb{S}^d))}.
\end{equation}

\subsubsection{Weighted Sobolev classes on \texorpdfstring{$\Omega^d$}.}

Consider the Dunkl weight function
$$w_{\boldsymbol\kappa,\mu}^{\mathbb B}({\bf
x}):=h_{\boldsymbol\kappa}^2({\bf x})\left(1-\|{\bf
x}\|^2\right)^{\mu-1/2},\ \mu\ge0,\ {\bf x}\in\mathbb{B}^d,$$
where $h_{\boldsymbol\kappa}^2$ is given by \eqref{2.0} and is
reflection invariant under the finite reflection group $G$. For
simplicity, we denote $L_2(w_{\boldsymbol\kappa,\mu}^{\mathbb
B})\equiv L_{2,w_{\boldsymbol\kappa,\mu}^{\mathbb B}}(\mathbb
B^d)$. Let $\mathcal{V}_n^d(w_{\boldsymbol\kappa,\mu}^{\mathbb
B})$ denote the space of weighted orthogonal polynomials of degree
$n$ with respect to the inner product
$$
\langle f,g\rangle_{L_2(w_{\boldsymbol\kappa,\mu}^{\mathbb
B})}:=\int_{\mathbb B^d}f({\bf x})g({\bf x})
w_{\boldsymbol\kappa,\mu}^{\mathbb B}({\bf x}){\rm d}{\bf x},\
f,g\in L_2(w_{\boldsymbol\kappa,\mu}^{\mathbb B}),
$$i.e., $$\mathcal{V}_n^d(w_{\boldsymbol\kappa,\mu}^{\mathbb
B})=\{f\in \Pi_n(\mathbb B^d) :\ \langle
f,g\rangle_{L_2(w_{\boldsymbol\kappa,\mu}^{\mathbb B})}=0, \ {\rm
for\ all}\ g\in \Pi_{n-1}(\mathbb B^d)\}.
$$ Elements of $\mathcal{V}_n^d(w_{\boldsymbol\kappa,\mu}^{\mathbb
B})$ are closely related to the $h$-spherical harmonics associated
with the Dunkl weight function
$$
 h^2_{\boldsymbol\kappa,\mu}({\bf x},x_{d+1}):=h^2_{\boldsymbol\kappa}({\bf x})|x_{d+1}|^{2\mu},\ ({\bf x},x_{d+1})\in\mathbb S^{d},
$$ which is invariant under the group $G\times \mathbb{Z}_2$.

Let $Y_n$ be such an $h$-harmonic
polynomial of degree $n$ and assume that $Y_n$ is even in the $(d+1)$-th variable; that is, $$Y_n({\bf x},x_{d+1})=Y_n({\bf x},-x_{d+1}).$$
Then we can write
$$Y_n({\bf y})=r^nP_n({\bf x}),\ {\bf y}=r({\bf x},x_{d+1})\in\mathbb R^{d+1},\ r=\|{\bf y}\|,\ ({\bf x},x_{d+1})\in\mathbb S^{d},$$
in polar coordinates. Then $P_n\in\mathcal{V}_n^d(w_{\boldsymbol\kappa,\mu}^{\mathbb B})$ and this relation is
an one-to-one correspondence.
Furthermore, let $\Delta_{\boldsymbol\kappa,\mu}$ denote the Dunkl-Laplacian
associated with $h^2_{\boldsymbol\kappa,\mu}$ and $\Delta_{\boldsymbol\kappa,\mu,0}$ denote the corresponding spherical Dunkl-Laplacian.
When $\Delta_{\boldsymbol\kappa,\mu,0}$
is applied to functions on $\mathbb R^{d+1}$ that are even in the $(d+1)$-th variable,
the spherical Dunkl-Laplacian can be written in polar coordinates ${\bf y}=r({\bf x},x_{d+1})$ as
$$\Delta_{\boldsymbol\kappa,\mu,0}=\Delta_{\boldsymbol\kappa}-\langle{\bf x},\nabla\rangle^2-2\lambda_{\boldsymbol\kappa,\mu}\langle{\bf x},\nabla\rangle,\ \ \lambda_{\boldsymbol\kappa,\mu}=|\boldsymbol\kappa|+\mu+\frac{d-1}2,$$
in which the operators $\Delta_{\boldsymbol\kappa}$ and $\nabla=(\partial_1,\dots,\partial_d)$ are all acting on ${\bf x}$ variables, and
$\Delta_{\boldsymbol\kappa}$ is the Dunkl-Laplacian associated with $h_{\boldsymbol\kappa}^2$ on $\mathbb R^d$. Define
$$D_{\boldsymbol\kappa,\mu}^{\mathbb B}:=\Delta_{\boldsymbol\kappa}-\langle{\bf x},\nabla\rangle^2-2\lambda_{\boldsymbol\kappa,\mu}\langle{\bf x},\nabla\rangle,$$
which satisfies
$$D_{\boldsymbol\kappa,\mu}^{\mathbb B}P=-n(n+2\lambda_{\boldsymbol\kappa,\mu})P,\ \ P\in\mathcal{V}_n^d(w_{\boldsymbol\kappa,\mu}^{\mathbb B}).$$

For $f \in L_p(w_{\boldsymbol\kappa,\mu}^{\mathbb B})$, define $F({\bf x},x_{d+1}) = f({\bf x})$. Then
$F \in L_p(w_{\boldsymbol\kappa,\mu}^{\mathbb S})$ and for $r>0$,
\begin{equation} \label{4.3}
\|(-D_{\boldsymbol\kappa,\mu}^{\mathbb B})^{r/2} f\|_{L_p(w_{\boldsymbol\kappa,\mu}^{\mathbb B})} =
      \|(-\Delta_{\boldsymbol\kappa,\mu,0})^{r/2}F\|_{L_p(w_{\boldsymbol\kappa,\mu}^{\mathbb S})},
\end{equation}which, combining with \eqref{2.1}, leads that Bernstein's inequality holds: for $P\in\Pi_n^d$,
\begin{equation} \label{4.4}
\|(-D_{\boldsymbol\kappa,\mu}^{\mathbb B})^{r/2} P\|_{L_p(w_{\boldsymbol\kappa,\mu}^{\mathbb B})}\lesssim
      n^r\,\|P\|_{L_p(w_{\boldsymbol\kappa,\mu}^{\mathbb B})}.
\end{equation}
Therefore, for $1\le p\le \infty$ and $r>0$, we can define the weighted
Sobolev space $W_{p}^r(w_{\boldsymbol\kappa,\mu}^{\mathbb B})$ to be the space of all real
functions $f$ with finite norm
$$\|f\|_{W_{p}^r(w_{\boldsymbol\kappa,\mu}^{\mathbb B})}:=\|f\|_{L_p(w_{\boldsymbol\kappa,\mu}^{\mathbb B})}+\|(-D_{\boldsymbol\kappa,\mu}^{\mathbb B})^{r/2}f\|_{L_{p}(w_{\boldsymbol\kappa,\mu}^{\mathbb B})},$$
while the weighted Sobolev class
$BW_{p}^r(w_{\boldsymbol\kappa,\mu}^{\mathbb B})$ is defined to be the unit ball of the
weighted Sobolev space $W_{p}^r(w_{\boldsymbol\kappa,\mu}^{\mathbb B})$.
 Similar to the spherical case, we have $W_p^r(w_{\boldsymbol\kappa,\mu}^{\mathbb B})\hookrightarrow C(\mathbb B^d)$ if $r>s_{w_{\boldsymbol\kappa,\mu}^{\mathbb B}}/p$, and it follows from \eqref{2.2} and \eqref{4.3} that Jackson's inequality holds: for  $f\in W_p^r(w_{\boldsymbol\kappa,\mu}^{\mathbb B})$,
  \begin{equation}\label{4.5}
  E_n(f)_{L_p(w_{\boldsymbol\kappa,\mu}^{\mathbb B})}\lesssim n^{-r} \|f\|_{W_p^r(w_{\boldsymbol\kappa,\mu}^{\mathbb B})}.
\end{equation}

  Consider the Dunkl weight function
$$
w_{\boldsymbol\kappa,\mu}^{\mathbb T}({\bf x}) = h_{\boldsymbol\kappa}^2(\sqrt{\bf x})
   (1-|{\bf x}|)^{\mu-1/2} /\sqrt{x_1\dots x_d},\ \mu \ge0,\ {\bf x}\in\mathbb T^d,
$$
where  $h_{\boldsymbol\kappa}^2$ given by \eqref{2.0} is
 invariant under the finite reflection group $G$ and is even in
each of its variables. For example, the finite reflection  group
$G$ can be chosen as the rotation group $\mathbb Z_2^d$. For
simplicity, we denote $L_2(w_{\boldsymbol\kappa,\mu}^{\mathbb
T})\equiv L_{2,w_{\boldsymbol\kappa,\mu}^{\mathbb T}}(\mathbb
T^d)$. Let $\mathcal{V}_n^d(w_{\boldsymbol\kappa,\mu}^{\mathbb
T})$ be the space of weighted orthogonal polynomials on $\mathbb
T^d$ of degree $n$ with respect to the inner product
$$
\langle f,g\rangle_{L_2(w_{\boldsymbol\kappa,\mu}^{\mathbb T})}:=\int_{\mathbb B^d}f({\bf x})g({\bf x})w_{\boldsymbol\kappa,\mu}^{\mathbb T}({\bf x}){\rm d}{\bf x},\ f,g\in L_2(w_{\boldsymbol\kappa,\mu}^{\mathbb T}).
$$

 For a polynomial $P_{2n}\in\Pi_{2n}^d$ being even in each of its variables,
we can write $P_{2n}$ as
$$P_{2n} (x_1,\dots, x_d) = R_n(x_1^2,\dots,x_d^2)$$for ${\bf x}=(x_1,\dots, x_d)\in\mathbb T^d$.
This implies that
$P_{2n}\in\mathcal{V}_{2n}^d(w_{\boldsymbol\kappa,\mu}^{\mathbb T})$ if and only if $R_n\in\mathcal{V}_n^d(w_{\boldsymbol\kappa,\mu}^{\mathbb B})$, and the
relation is an one-to-one correspondence. In particular, applying
$D_{\boldsymbol\kappa,\mu}^{\mathbb B}$ on $P_{2n}$ leads to a second order
differential-difference operator, denoted
by $D_{\boldsymbol\kappa,\mu}^{\mathbb T}$, acting on $R_n$. Then
$$
  D_{\boldsymbol\kappa,\mu}^{\mathbb T} P = - n (n+\lambda_{\boldsymbol\kappa,\mu}) P,
    \ P \in\mathcal{V}_n^d(w_{\boldsymbol\kappa,\mu}^{\mathbb T}),\ \lambda_{\boldsymbol\kappa,\mu} =
     |\boldsymbol\kappa| + \mu + \frac{d-1}{2}.
$$

For $f \in L_p(w_{\boldsymbol\kappa,\mu}^{\mathbb T})$, define $F({\bf x}) = f({\bf x}^2)$. Then
$F \in L_p(w_{\boldsymbol\kappa,\mu}^{\mathbb B})$ and for $r>0$,
\begin{equation} \label{4.6}
\|(-D_{\boldsymbol\kappa,\mu}^{\mathbb T})^{r/2} f\|_{L_p(w_{\boldsymbol\kappa,\mu}^{\mathbb T})} =
     2^{-r} \|(-D_{\boldsymbol\kappa,\mu}^{\mathbb B})^{r/2} F\|_{L_p(w_{\boldsymbol\kappa,\mu}^{\mathbb B})},
\end{equation}which, combining with \eqref{2.1}, gives  Bernstein's inequality: for  $P\in\Pi_n^d$,
\begin{equation} \label{4.7}
\|(-D_{\boldsymbol\kappa,\mu}^{\mathbb T})^{r/2} P\|_{L_p(w_{\boldsymbol\kappa,\mu}^{\mathbb T})}\lesssim
      n^r\,\|P\|_{L_p(w_{\boldsymbol\kappa,\mu}^{\mathbb T})}.
\end{equation}
Therefore, for $1\le p\le \infty$ and $r>0$, we can define the weighted
Sobolev space $W_{p}^r(w_{\boldsymbol\kappa,\mu}^{\mathbb T})$ to be the space of all real
functions $f$ with finite norm
$$\|f\|_{W_{p}^r(w_{\boldsymbol\kappa,\mu}^{\mathbb T})}:=\|f\|_{L_p(w_{\boldsymbol\kappa,\mu}^{\mathbb T})}+\|(-D_{\boldsymbol\kappa,\mu}^{\mathbb T})^{r/2}f\|_{L_{p}(w_{\boldsymbol\kappa,\mu}^{\mathbb T})},$$
while the weighted Sobolev class
$BW_{p}^r(w_{\boldsymbol\kappa,\mu}^{\mathbb T})$ is defined to be the unit ball of the
weighted Sobolev space $W_{p}^r(w_{\boldsymbol\kappa,\mu}^{\mathbb T})$.
 Similar to the spherical case, we have $W_p^r(w_{\boldsymbol\kappa,\mu}^{\mathbb T})\hookrightarrow C(\mathbb T^d)$ if $r>s_{w_{\boldsymbol\kappa,\mu}^{\mathbb T}}/p$, and it follows from \eqref{2.2} and \eqref{4.6} that Jackson's inequality holds: for  $f\in W_p^r(w_{\boldsymbol\kappa,\mu}^{\mathbb T})$,
  \begin{equation}\label{4.8}
  E_n(f)_{L_p(w_{\boldsymbol\kappa,\mu}^{\mathbb T})}\lesssim n^{-r} \|f\|_{W_p^r(w_{\boldsymbol\kappa,\mu}^{\mathbb T})}.
\end{equation}

\subsection{Main Results}

\subsubsection{Deterministic case.}

\begin{thm}\label{thm4.1}Let $1\le p\le\infty$, $0<\gamma\le\infty$, $\Theta_1(t)$, $\Theta(t)=t^r\Theta_1(t)\in\Phi_s^*$, and let $w$ be an $A_\infty$ weight with the critical index $s_w$ on $\Oz^d$. Then, for $r>s_w/p$, we have
\begin{equation}\label{4.13}
e_n^{\rm det}\left({\rm Int}_{\Oz^d,w}; BB_{\gamma}^\Theta(L_{p,w}(\Oz^d))\right)\asymp \Theta\big(n^{-\frac1{d}}\big).
\end{equation}Moreover,  the upper estimate of \eqref{4.13} holds for all doubling weights.

Furthermore, if $w_{\boldsymbol\kappa,\mu}^{\Omega}$ is a Dunkl weight with the critical index $s_{w_{\boldsymbol\kappa,\mu}^{\Omega}}$ on $\Oz^d$, then for $r>s_{w_{\boldsymbol\kappa,\mu}^{\Omega}}/p$, we have
\begin{equation}\label{4.15}
e_n^{\rm det}\left({\rm Int}_{\Oz^{d},w_{\boldsymbol\kappa,\mu}^{\Omega}}; BW_p^r(w_{\boldsymbol\kappa,\mu}^{\Omega})\right)\asymp n^{-\frac r{d}}.
\end{equation}
\end{thm}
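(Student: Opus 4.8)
The plan is to transfer the upper estimates from the sphere and to prove the lower estimates by a direct fooling-function argument on $\Oz^d$, exploiting the integration identity \eqref{4.2} and the norm equivalence \eqref{4.11}. Recall from the discussion following \eqref{4.2} that $w$ is an $A_\infty$ (resp.\ doubling) weight on $\Oz^d$ if and only if $T_\Oz w$ is one on $\mathbb S^d$, and that the two critical indices $s_w$ agree. This lets me reuse the spherical machinery of Section~\ref{sect3} for $T_\Oz w$ and then translate conclusions back to $\Oz^d$ through $\psi$.

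For the upper estimate of \eqref{4.13} I would fix a positive cubature formula on $\mathbb S^d$ for the doubling weight $T_\Oz w$ as in Proposition~\ref{prop}, with nodes $\eta_1,\dots,\eta_n$ and positive weights $\lambda_i$, and define the quadrature $Q_n(f):=\sum_i\lambda_i f(\psi(\eta_i))$ on $\Oz^d$. Since $Q_n(f)=\sum_i\lambda_i(f\circ\psi)(\eta_i)$ while ${\rm Int}_{\Oz^d,w}(f)={\rm Int}_{\mathbb S^d,T_\Oz w}(f\circ\psi)$ by \eqref{4.2}, the error of $Q_n$ on $f$ is \emph{exactly} the error of the sphere cubature on $f\circ\psi$. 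Applying Proposition~\ref{prop} to $f\circ\psi\in B_\gamma^\Theta(L_{p,T_\Oz w}(\mathbb S^d))$ and invoking \eqref{4.11} gives $|{\rm Int}_{\Oz^d,w}(f)-Q_n(f)|\lesssim\Theta(n^{-1/d})\|f\|_{B_\gamma^\Theta(L_{p,w}(\Oz^d))}$; as Proposition~\ref{prop} requires only a doubling weight, this half holds for all doubling $w$. For the upper estimate of \eqref{4.15} I would combine the Jackson inequalities \eqref{4.5}, \eqref{4.8} with the Bernstein inequalities \eqref{4.4}, \eqref{4.7} to obtain the embedding $W_p^r(w_{\boldsymbol\kappa,\mu}^{\Omega})\hookrightarrow B_\infty^r(L_{p,w_{\boldsymbol\kappa,\mu}^{\Omega}})$ exactly as \eqref{2.8-0} is derived on the sphere; since a Dunkl weight is in particular $A_\infty$, the case $\Theta(t)=t^r$ of the bound just proved yields $e_n^{\rm det}(\cdots;BW_p^r(w_{\boldsymbol\kappa,\mu}^{\Omega}))\lesssim e_n^{\rm det}(\cdots;BB_\infty^r(L_{p,w_{\boldsymbol\kappa,\mu}^{\Omega}}))\lesssim n^{-r/d}$.

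For the lower estimates I would argue directly on $\Oz^d$ through \eqref{1}, mirroring the two cases of the spherical proof. Fix arbitrary points $\xi_1,\dots,\xi_n\in\Oz^d$, choose $N$ with $2n\le\dim\Pi_N(\Oz^d)\le C_0 n$, and set $X_0:=\{g\in\Pi_N(\Oz^d):g(\xi_j)=0,\ j=1,\dots,n\}$, so that $\dim X_0\ge\tfrac12\dim\Pi_N(\Oz^d)$. The essential input is the $\Oz^d$-analogue of Lemma~\ref{lem3.4}: for an $A_\infty$ weight there is $g_0\in X_0$ with $\|g_0\|_{p_0,w}\asymp1$ for all $1\le p_0\le\infty$. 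In the Besov case I would take $f_0=\Theta(N^{-1})g_0^2\in\Pi_{2N}(\Oz^d)$ and repeat the computation \eqref{3.5-000} verbatim, which uses only the $\Phi_s^*$-properties of $\Theta$ and $E_{2^j}(f_0)_{p,w}\le\|f_0\|_{p,w}$, to get $\|f_0\|_{B_\gamma^\Theta(L_{p,w}(\Oz^d))}\lesssim1$. In the Sobolev case I would take $f_0=N^{-r}g_0^2$ and apply the Bernstein inequality \eqref{4.4} (ball) or \eqref{4.7} (simplex) to get $\|f_0\|_{W_p^r(w_{\boldsymbol\kappa,\mu}^{\Omega})}\lesssim N^r\|f_0\|_{p,w}=\|g_0\|_{2p,w}^2\asymp1$. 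In either case a rescaling $f_1=Cf_0$ lies in the relevant unit ball, vanishes at every $\xi_j$, and satisfies $\int_{\Oz^d}f_1 w\,{\rm d}{\bf x}\asymp\Theta(N^{-1})\|g_0\|_{2,w}^2\asymp\Theta(n^{-1/d})$ (respectively $\asymp N^{-r}\|g_0\|_{2,w}^2\asymp n^{-r/d}$); inserting $f_1$ into \eqref{1} gives the matching lower bound.

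The main obstacle is the lower-bound input, namely the $\Oz^d$-version of Lemma~\ref{lem3.4}. One can either cite the corresponding statement for $A_\infty$ weights on the ball and simplex from \cite{DW}, or transfer Lemma~\ref{lem3.4} through $\psi$; the latter requires checking that $\{g\circ\psi:g\in X_0\}$ is a subspace of $G$-symmetric spherical polynomials of dimension comparable to $\dim\Pi_{cN}(\mathbb S^d)$ and that the flat polynomial furnished by Lemma~\ref{lem3.4} can be chosen symmetric (which holds because its construction can be averaged over the group). A secondary, routine point is that the factor-of-$2$ discrepancy between polynomial degrees on $\Oz^d$ and on $\mathbb S^d$ under $\psi$ is harmless, since $\Theta(2^{-(j+1)})\asymp\Theta(2^{-j})$ by property~(4) of $\Phi_s^*$ and is therefore absorbed into the constants in \eqref{4.11}.
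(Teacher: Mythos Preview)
Your proposal is correct and follows essentially the same route as the paper: upper bounds are transferred from the sphere through $\psi$ and the identities \eqref{4.2}, \eqref{4.11}; the Sobolev upper bound is reduced to the Besov case via the embedding $W_p^r(w_{\boldsymbol\kappa,\mu}^{\Omega})\hookrightarrow B_\infty^r(L_{p,w_{\boldsymbol\kappa,\mu}^{\Omega}})$ (for which Jackson alone suffices---you do not actually need \eqref{4.4}, \eqref{4.7} here); and the lower bounds are obtained directly on $\Oz^d$ by the fooling-polynomial argument, citing the $\Oz^d$-analogue of Lemma~\ref{lem3.4} from \cite[Proposition~4.8]{DW}, exactly as the paper does.
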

\begin{proof} The proof of the lower estimates is almost identical to that of  lower estimates
of spherical case, which depends on an analog of Lemma
\ref{lem3.4} on $\Omega^d$ (see \cite[Proposition 4.8]{DW}).
Meanwhile, by the embedding
$W_p^r(w_{\boldsymbol\kappa,\mu}^{\Omega})\hookrightarrow
B_{\infty}^r(L_p(w_{\boldsymbol\kappa,\mu}^{\Omega}))$ we have
$$e_n^{\rm det}\left({\rm Int}_{\Oz^{d},w_{\boldsymbol\kappa,\mu}^{\Omega}}; BW_p^r(w_{\boldsymbol\kappa,\mu}^{\Omega})\right)\lesssim e_n^{\rm det}\left({\rm Int}_{\Oz^d,w_{\boldsymbol\kappa,\mu}^{\Omega}}; BB_\infty^r(L_p(w_{\boldsymbol\kappa,\mu}^{\Omega}))\right),$$
which indicates that the upper estimate of \eqref{4.15} can be
derived from \eqref{1.7}.

Now we turn to give the proof of the upper estimate of \eqref{4.13} for doubling weights.
 To this end, let  $w$ be a doubling weight on $\Omega^d$ and $f\in B_\gamma^\Theta(L_{p,w}(\Oz^d))$. Then, by \eqref{4.11} we have  $F=f\circ \psi\in B_\gamma^\Theta(L_{p,T_\Oz w}(\mathbb S^d))$.
It follows from  \eqref{3.1} that there exist ${\bf x}_1^*,\dots,{\bf x}_n^*\in \mathbb S^d$ and $\lambda_1^*,\dots,\lambda_n^*\in\mathbb R$ such that
$$\left|\int_{\mathbb S^d}F(\bar{{\bf x}})T_{\Oz}w(\bar{{\bf x}}){\rm d}\sigma(\bar{{\bf x}})-\sum_{j=1}^n\lambda_j^* F({\bf x}_j^*)\right|\lesssim \Theta\big(n^{-\frac1{d}}\big).$$
Choose ${\bf y}_j^*=\psi({\bf x}_j^*)\in\Omega^d,j=1,\dots,n$. Then by \eqref{4.2} we have
$$\left|\int_{\Oz^d}f({\bf x})w({\bf x}){\rm d}{\bf x}-\sum_{j=1}^n\lambda_j^* f({\bf y}_j^*)\right|\lesssim \Theta\big(n^{-\frac1{d}}\big),$$
which yields the desired estimate.
This completes the proof.
\end{proof}

\subsubsection{Randomized case.}

\begin{thm}\label{thm4.2} Let $1\le p\le\infty$, $0<\gamma\le\infty$, $\Theta_1(t)$, $\Theta(t)=t^r\Theta_1(t)\in\Phi_s^*$,
and let $w_{\boldsymbol\kappa,\mu}^{\Omega}$ be a product weight
with the critical index $s_{w_{\boldsymbol\kappa,\mu}^{\Omega}}$
on $\Oz^d$. Then, for
$r>s_{w_{\boldsymbol\kappa,\mu}^{\Omega}}/p$, we have
\begin{equation}\label{4.16}
e_n^{\rm ran}\left({\rm Int}_{\Oz^d,w_{\boldsymbol\kappa,\mu}^{\Omega}}; BB_{\gamma}^\Theta(L_{p,w_{\boldsymbol\kappa,\mu}^{\Omega}}(\Oz^d))\right)\asymp \Theta\big(n^{-\frac1{d}}\big)n^{-\frac12+(\frac1p-\frac12)_+}.
\end{equation}Moreover, the upper estimate of \eqref{4.16} holds for all doubling weights.

Furthermore, if $w_{\boldsymbol\kappa,\mu}^{\Omega}$ is a Dunkl weight with the critical index $s_{w_{\boldsymbol\kappa,\mu}^{\Omega}}$ on $\Oz^d$, then for $r>s_{w_{\boldsymbol\kappa,\mu}^{\Omega}}/p$, we have
$$
e_n^{\rm ran}\left({\rm Int}_{\Oz^{d},w_{\boldsymbol\kappa,\mu}^{\Omega}}; BW_p^r(w_{\boldsymbol\kappa,\mu}^{\Omega})\right)\asymp n^{-\frac r{d}-\frac12+\left(\frac1p-\frac12\right)_+}.
$$
\end{thm}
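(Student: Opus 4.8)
The plan is to reduce the whole statement to the spherical estimates \eqref{3.23} and \eqref{3.25} by transporting the problem from $\Omega^d$ to $\mathbb S^d$ through the map $\psi$ of \eqref{4.1-0}. The tools are the integration identity \eqref{4.2}, the Besov-norm equivalence \eqref{4.11}, and the Sobolev seminorm identities \eqref{4.3} and \eqref{4.6}. The structural input that makes the transfer legitimate is \eqref{4.2-00}: if $w_{\boldsymbol\kappa,\mu}^{\Omega}$ is a product (resp.\ Dunkl) weight on $\Omega^d$, then its lift $T_{\Omega}w_{\boldsymbol\kappa,\mu}^{\Omega}=h_{\bar{\boldsymbol\kappa}}^2$, with $\bar{\boldsymbol\kappa}=(\boldsymbol\kappa,\mu)$, is a product (resp.\ Dunkl) weight on $\mathbb S^d$ and the two critical indices agree, so Theorem \ref{thm2} applies to $h_{\bar{\boldsymbol\kappa}}^2$.

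For the upper estimates I would transfer an optimal spherical algorithm. Let $(B^{\omega})\in\mathcal{A}_n^{\rm ran}$ be a near-optimal randomized algorithm for ${\rm Int}_{\mathbb S^d,h_{\bar{\boldsymbol\kappa}}^2}$ on $BB_\gamma^\Theta(L_{p,h_{\bar{\boldsymbol\kappa}}^2})$ furnished by \eqref{3.25}, sampling $G\in C(\mathbb S^d)$ at points $\bar{\boldsymbol\xi}_i\in\mathbb S^d$. For $f\in C(\Omega^d)$ set $A_n^{\omega}(f):=B^{\omega}(f\circ\psi)$. Since $(f\circ\psi)(\bar{\boldsymbol\xi}_i)=f(\psi(\bar{\boldsymbol\xi}_i))$, this evaluates $f$ only at the points $\psi(\bar{\boldsymbol\xi}_i)\in\Omega^d$, so $(A_n^{\omega})\in\mathcal{A}_n^{\rm ran}$ with the same cardinality. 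By \eqref{4.2}, ${\rm Int}_{\Omega^d,w}(f)={\rm Int}_{\mathbb S^d,h_{\bar{\boldsymbol\kappa}}^2}(f\circ\psi)$, whence $\mathbb E_\omega|{\rm Int}_{\Omega^d,w}(f)-A_n^{\omega}(f)|=\mathbb E_\omega|{\rm Int}_{\mathbb S^d,h_{\bar{\boldsymbol\kappa}}^2}(f\circ\psi)-B^{\omega}(f\circ\psi)|$, while \eqref{4.11} bounds $\|f\circ\psi\|_{B_\gamma^\Theta(L_{p,h_{\bar{\boldsymbol\kappa}}^2})}$ by a constant times $\|f\|_{B_\gamma^\Theta(L_{p,w_{\boldsymbol\kappa,\mu}^{\Omega}})}$. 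Taking the supremum over the unit ball yields the upper bound in \eqref{4.16}; since the spherical bound holds for every doubling weight and doubling is preserved under $w\mapsto T_{\Omega}w$, so is the bound on $\Omega^d$. The Sobolev upper estimate is identical, with \eqref{4.3} and \eqref{4.6} in place of \eqref{4.11} (equivalently, via $W_p^r(w_{\boldsymbol\kappa,\mu}^{\Omega})\hookrightarrow B_\infty^r(L_p(w_{\boldsymbol\kappa,\mu}^{\Omega}))$ and the case $\Theta(t)=t^r$).

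For the lower estimates I would reproduce on $\Omega^d$ the fooling-function construction used on the sphere. Fix $N\asymp n^{1/d}$, take a maximal $1/N$-separated set of points of $\Omega^d$ lying in the region where $w_{\boldsymbol\kappa,\mu}^{\Omega}\asymp 1$ (bounded away from $\partial\Omega^d$ and from the zero sets $\langle{\bf x},{\bf v}_j\rangle=0$), and place smooth bumps $\varphi_j$ of $d_{\Omega}$-radius $\asymp 1/N$ on $4n$ of the disjoint balls $C_{\Omega}({\bf x}_j,1/N)$, so that $\|\varphi_j\|_{L_{p,w_{\boldsymbol\kappa,\mu}^{\Omega}}}\asymp n^{-1/p}$ and $\|\varphi_j\|_{1,w_{\boldsymbol\kappa,\mu}^{\Omega}}\asymp n^{-1}$. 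The essential step is the analogue of Lemma \ref{lem44}, namely $\|f_{\boldsymbol\alpha}\|_{W_p^r(w_{\boldsymbol\kappa,\mu}^{\Omega})}\lesssim n^{r/d-1/p}\|\boldsymbol\alpha\|_{\ell_p^{4n}}$ and $\|f_{\boldsymbol\alpha}\|_{B_\gamma^\Theta(L_{p,w_{\boldsymbol\kappa,\mu}^{\Omega}})}\lesssim\Theta(n^{-1/d})^{-1}n^{-1/p}\|\boldsymbol\alpha\|_{\ell_p^{4n}}$ for $f_{\boldsymbol\alpha}=\sum_j\alpha_j\varphi_j$, which I would derive as there, replacing $(-\Delta_{\boldsymbol\kappa,0})$ by $D_{\boldsymbol\kappa,\mu}^{\Omega}$ and using the Bernstein inequalities \eqref{4.4} and \eqref{4.7}, the Jackson inequalities \eqref{4.5} and \eqref{4.8}, and a Kolmogorov-type inequality on $\Omega^d$. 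After rescaling I would invoke Lemma \ref{lem3.5}, which is already stated for a general weighted domain: for $2\le p\le\infty$ (reducing to $p=\infty$ through the embedding $B_\gamma^\Theta(L_\infty)\hookrightarrow B_\gamma^\Theta(L_{p,w_{\boldsymbol\kappa,\mu}^{\Omega}})$ and its Sobolev analogue) part (a) gives the bound with $\delta\asymp\Theta(n^{-1/d})n^{-1}$ (resp.\ $\delta\asymp n^{-r/d-1}$), and for $1\le p<2$ part (b) gives it with $\delta\asymp\Theta(n^{-1/d})n^{1/p-1}$ (resp.\ $\delta\asymp n^{-r/d+1/p-1}$).

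The main obstacle will be the support bookkeeping inside the lower-bound norm estimate, that is, the $\Omega^d$-analogue of Steps 1--4 in the proof of Lemma \ref{lem44}: one must verify that $(D_{\boldsymbol\kappa,\mu}^{\Omega})^{v}\varphi_j$ stays supported in the $G$-orbit of $C_{\Omega}({\bf x}_j,1/N)$, so that the $4n$ summands overlap with multiplicity at most $\#G$ and the $\ell_p^{4n}$ estimates follow. This is more delicate than on the sphere because $D_{\boldsymbol\kappa,\mu}^{\Omega}$ is a genuine second-order differential--difference operator on a domain with boundary, and because $\psi$ is $2$-to-$1$ on $\mathbb B^d$ and $2^{d+1}$-to-$1$ on $\mathbb T^d$; one must keep the bumps and their reflected copies disjoint and away from the degeneracies of $w_{\boldsymbol\kappa,\mu}^{\Omega}$.
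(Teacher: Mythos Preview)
Your upper-bound argument coincides with the paper's: transfer an optimal spherical algorithm through $\psi$, using \eqref{4.2} for the integral and \eqref{4.11} (or \eqref{4.3}, \eqref{4.6}) for the norms.

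For the lower bound your route diverges from the paper's. You propose to build bumps directly on $\Omega^d$ in the $d_\Omega$-metric and then redo the whole Lemma~\ref{lem44} machinery with $D_{\boldsymbol\kappa,\mu}^{\Omega}$ in place of $-\Delta_{\boldsymbol\kappa,0}$, which forces you to confront the support analysis for $(D_{\boldsymbol\kappa,\mu}^{\Omega})^{v}\varphi_j$ and to locate a Kolmogorov-type inequality on $\Omega^d$. The paper instead applies the \emph{same} transfer idea it used for the upper bound: it places the $4n$ spherical bumps $\bar\varphi_j$ inside the upper hemisphere $\mathbb S_+^d$ (resp.\ inside $\mathbb B_+^d$), observes that $\psi_1:\mathbb S_+^d\to\mathbb B^d$ (resp.\ $\psi_2:\mathbb B_+^d\to\mathbb T^d$) is an isometry for the intrinsic metrics, so that the pullback $\varphi_j:=\bar\varphi_j\circ\psi_1^{-1}$ is again a bump of radius $1/N$ on $\Omega^d$, and then reads off the required $W_p^r(w_{\boldsymbol\kappa,\mu}^{\Omega})$ and $B_\gamma^\Theta(L_p(w_{\boldsymbol\kappa,\mu}^{\Omega}))$ bounds directly from the spherical Lemma~\ref{lem44} via \eqref{4.2}, \eqref{4.3}, \eqref{4.6} and \eqref{4.11}. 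In other words, the analog of Lemma~\ref{lem44} on $\Omega^d$ is obtained by transport, not by reproving Steps~1--4; the support bookkeeping you flag as the main obstacle never arises. Your direct approach is not wrong, but it is harder than necessary, and the ingredients you would need (support propagation for $D_{\boldsymbol\kappa,\mu}^{\Omega}$ under the reflection group on $\Omega^d$, a Kolmogorov inequality on $\Omega^d$) are themselves most naturally proved by exactly this transfer.
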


The proof of upper estimates of Theorem \ref{thm4.2} is based on the relations between sphere, ball, and simplex, which is easy to be checked.
\begin{proof}[Proof of upper estimates.]\

 Similar to the proof of Theorem \ref{thm4.1}, it suffices to show the upper estimate of \eqref{4.16} for doubling weights. To this end, let $w$ be a doubling weight on $\Omega^d$  and   $f\in B_\gamma^\Theta(L_{p,w}(\Oz^d))$. Then, by \eqref{4.11} we have  $ F=f\circ\psi\in B_\gamma^\Theta(L_{p,T_\Oz w}(\mathbb S^d))$.
It follows from \eqref{3.23} that there exist a randomized algorithm $(A_n^\omega)_{\omega\in\mathcal F}$ for $C(\mathbb S^d)$ as
$$A_n^\oz(F)=\varphi_n^\oz\left[F({\bf x}_{1,\oz}^*),\dots,F({\bf x}_{n,\oz}^*)\right]$$ such that
$$\mathbb E_\oz\left|\int_{\mathbb S^d}F(\bar{{\bf x}})\,T_{\Oz}w(\bar{{\bf x}}){\rm d}\sigma(\bar{{\bf x}})
-A_n^{\oz}( F)\right|\lesssim
\Theta\big(n^{-\frac1{d}}\big)n^{-\frac12+(\frac1p-\frac12)_+}.$$
Let ${\bf y}_{j,\omega}^*=\psi({\bf x}_{j,\omega}^*)\in\Omega^d,j=1,\dots,n$, and
$$\tilde A_n^\omega(f):=A_n^\omega(f\circ \psi)= \varphi_n^\oz\left[f({\bf y}_{1,\oz}^*),\dots,f({\bf y}_{n,\oz}^*)\right].$$Then
$(\tilde A_n^\omega)_{\omega\in\mathcal F}$ can be viewed as a randomized algorithm for $C(\Omega^d)$, and by \eqref{4.2} we have
$$\mathbb E_\oz\left|\int_{\Oz^d}f({\bf x})w({\bf x}){\rm d}{\bf x}-\tilde A_n^{\oz}(f)\right|\lesssim \Theta\big(n^{-\frac1{d}}\big)n^{-\frac12+(\frac1p-\frac12)_+},$$
which yields the desired estimate.

The proof is finished.
\end{proof}

The proof of lower estimates of Theorem \ref{thm4.2} is almost
identical to that of the lower estimate of the sphere. For our
purpose, we also need to construct the fooling functions. The
argument depends on  an analog of Lemma
\ref{lem44} on $\Omega^d$. We
only give the construction of the fooling functions and the detail proof of Theorem
\ref{thm4.2} is omitted.

In the following, we consider $1\le p\le\infty$, $0<\gamma\le\infty$, $r>0$, and $\Theta_1(t)$, $\Theta(t)=t^r\Theta_1(t)\in\Phi_s^*$.
Let $\boldsymbol\kappa:=(\kappa_1,\dots,\kappa_m)\in[0,+\infty)^m$, $\mu\ge0$, and ${\bf v}_j\in\mathbb S^{d}$, $j=1,\dots,m$. Recall the product weight $w_{\boldsymbol\kappa,\mu}^\Omega$ on $\Omega^d$ given by \eqref{4.2-0}.
Then
$$T_{\Oz}w_{\boldsymbol\kappa,\mu}^{\Oz}(\bar{{\bf x}})=\prod_{j=1}^{m+1}\left|\langle\bar{{\bf x}},\bar{{\bf v}}_j\rangle\right|^{2\kappa_j}=h_{\boldsymbol\kappa,\mu}^2(\bar{\bf x})\equiv w_{\boldsymbol\kappa,\mu}^{\mathbb S}(\bar{{\bf x}}).$$
Let $\varphi$ be a nonnegative
$C^\infty$-function on $\mathbb{R}$ supported in $[0,1]$
and being equal to 1 on $[0,1/2]$, and let $N$ be a sufficient large integer with $N^d\asymp n$.

\subsubsection*{Construct fooling functions.}

First we consider the case of $\Omega^d=\mathbb B^d$. Using the same method in Subsection 3.2, we can choose
$\{\bar {\bf x}_j\}_{j=1}^{4n}$ in the upper sphere $$\mathbb S_+^d:=\{{\bf x}=(x_1,\dots,x_{d+1})\in\mathbb S^d:x_{d+1}\ge0\}$$  such that
 $$\bar\varphi_j(\bar{\bf x}):=\varphi(Nd_{\mathbb S}\left(\bar{\bf x},\bar{\bf x}_j)\right),\ j=1,\dots,4n$$have the following properties.
\begin{itemize}
  \item [(i)] The supports ${\rm supp}(\bar\varphi_j)\subset{\rm c}(\bar{\bf x}_j,1/N),j=1,\dots,4n$ are disjoint, that is
 $${\rm supp}(\bar\varphi_i)\bigcap {\rm supp}(\bar\varphi_j)=\emptyset,\ {\rm for}\ i\neq
j.$$

  \item [(ii)] For any $1\le p<\infty$, the $L_p(w_{\boldsymbol\kappa,\mu}^{\mathbb S})$ norm of $\bar\varphi_j$ is
  $$\|\bar\varphi_j\|_{L_p(w_{\boldsymbol\kappa,\mu}^{\mathbb S})}\asymp\Big(\int_{{\rm c}(\bar{\bf x}_j,\frac1N)}|\varphi(Nd_{\mathbb S}\left(\bar{\bf x},\bar{\bf x}_j)\right)|^p{\rm d}\sigma(\bar{\bf x})\Big)^{1/p}\asymp  n^{-1/p},
$$
and the $L_{\infty}$ norm of $\bar\varphi_j$ is
  $$\|\bar\varphi_j\|_{L_{\infty}(\mathbb S^d)}=\sup_{\bar{\bf x}\in{\rm c}(\bar{\bf x}_j,\frac1N)}|\varphi(Nd_{\mathbb S}\left(\bar{\bf x},\bar{\bf x}_j)\right)|\asymp1.
$$

  \item [(iii)]For any $\bar f_{\boldsymbol\az}:=\sum_{j=1}^{4n}
\az_j\bar\varphi_j$ with ${\boldsymbol\az}=(\az_1,\cdots,\az_{4n})\in\mathbb{R}^{4n}$, we have
  $$\|\bar f_{\boldsymbol\az}\|_{L_p(w_{\boldsymbol\kappa,\mu}^{\mathbb S})}\asymp
 n^{-1/p}\,\|{\boldsymbol\az}\|_{\ell_p^{4n}},$$
and\begin{itemize}
  \item for any $r>0$,
$$\|\bar f_{{\boldsymbol\az}}\|_{W_p^r(w_{\boldsymbol\kappa,\mu}^{\mathbb S})}\lesssim n^{r/d-1/p}\,\|{\boldsymbol\az}\|_{\ell_p^{4n}},
$$whenever $w_{\boldsymbol\kappa,\mu}^{\mathbb S}$ is a Dunkl weight on $\mathbb S^d$;
 \item for any $r>0$,
$$\|\bar f_{{\boldsymbol\az}}\|_{B_\gamma^\Theta(L_p(w_{\boldsymbol\kappa,\mu}^{\mathbb S}))}\lesssim \Theta(n^{-1/d})^{-1}n^{-1/p}\,\|{\boldsymbol\az}\|_{\ell_p^{4n}},
$$whenever $w_{\boldsymbol\kappa,\mu}^{\mathbb S}$ is a product weight on $\mathbb S^d$.
\end{itemize}
\end{itemize}

Clearly, the mapping
\begin{align*}
  \psi_1:\,\mathbb S_+^d\to \mathbb B^d,\quad({\bf x},x_{d+1})\mapsto{\bf x}
\end{align*} is bijection, which leads that its inverse $\psi_1^{-1}:\,\mathbb B^d\to \mathbb S_+^d$ exists. Set
$$\varphi_j({\bf x}):=\varphi(Nd_{\mathbb B}\left({\bf x},{\bf x}_j)\right),\ j=1,\dots,4n$$
and $$F_0:=\Big\{f_{\boldsymbol\az}:=\sum_{j=1}^{4n}
\az_j\varphi_j:\,
{\boldsymbol\az}=(\az_1,\cdots,\az_{4n})\in\mathbb{R}^{4n}\Big\},$$where ${\bf x}_j:=\psi_1(\bar{\bf x}_j),j=1,\dots,4n$. Then
$$\varphi_j({\bf x})=\varphi(Nd_{\mathbb B}\left(\psi_1(\bar{\bf x}),\psi_1(\bar{\bf x}_j)\right)=\varphi(Nd_{\mathbb S}\left(\bar{\bf x},\bar{\bf x}_j)\right)=\bar\varphi_j(\bar{\bf x}),\ j=1,\dots,4n.$$
Therefore, from \eqref{4.2}, \eqref{4.3} and the above properties of $\{\bar\varphi_j\}_{j=1}^{4n}$, we derive that
 for $f_{\boldsymbol\az}\in F_0$ with $\boldsymbol\az=(\az_1,\dots,\az_{4n})\in\mathbb R^{4n}$,
\begin{itemize}
  \item[(i)] if $w_{\boldsymbol\kappa,\mu}^{\mathbb B}$ is a Dunkl weight on ${\mathbb B}^d$, then
$$\| f_{{\boldsymbol\az}}\|_{W_p^r(w_{\boldsymbol\kappa,\mu}^{{\mathbb B}})}\lesssim n^{r/d-1/p}\,\|{\boldsymbol\az}\|_{\ell_p^{4n}};
$$
  \item[(ii)] if $w_{\boldsymbol\kappa,\mu}^{\mathbb B}$ is a product weight on ${\mathbb B}^d$,  then
$$\| f_{{\boldsymbol\az}}\|_{B_\gamma^\Theta(L_p(w_{\boldsymbol\kappa,\mu}^{\mathbb B}))}\lesssim \Theta(n^{-1/d})^{-1}n^{-1/p}\,\|{\boldsymbol\az}\|_{\ell_p^{4n}}.
$$\end{itemize}

Next we turn to consider the case of $\Omega^d=\mathbb T^d$. It can be transferred to the case of $\Omega^d=\mathbb B^d$ by the bijection
\begin{align*}
  \psi_2:\,\mathbb B_+^d\to \mathbb T^d,\quad{\bf x}\mapsto{\bf x}^2,
\end{align*}where $\mathbb B_+^d:=\{{\bf x}=(x_1,\dots,x_d)\in\mathbb B^d:\,x_1,\dots,x_d\ge0\}$. Similar to the case of $\Omega^d=\mathbb S^d$, we can choose
$\{\tilde {\bf x}_j\}_{j=1}^{4n}$ in $\mathbb B_+^d$  such that
 $$\tilde\varphi_j(\tilde{\bf x}):=\varphi(Nd_{\mathbb B}\left(\tilde{\bf x},\tilde{\bf x}_j)\right),\ j=1,\dots,4n$$have the following properties.
\begin{itemize}
  \item [(i)] The supports ${\rm supp}(\tilde\varphi_j)\subset C_{\mathbb B}(\tilde{\bf x}_j,1/N),j=1,\dots,4n$ are disjoint, that is
 $${\rm supp}(\tilde\varphi_i)\bigcap {\rm supp}(\tilde\varphi_j)=\emptyset,\ {\rm for}\ i\neq
j.$$

  \item [(ii)] For any $1\le p<\infty$, the $L_p(w_{\boldsymbol\kappa,\mu}^{\mathbb B})$ norm of $\tilde\varphi_j$ is
  $$\|\tilde\varphi_j\|_{L_p(w_{\boldsymbol\kappa,\mu}^{\mathbb B})}\asymp\Big(\int_{C_{\mathbb B}(\tilde{\bf x}_j,\frac1N)}\frac{|\varphi(Nd_{\mathbb B}\left(\tilde{\bf x},\tilde{\bf x}_j)\right)|^p}{\sqrt{1-\|\tilde{\bf x}\|^2}}{\rm d}\tilde{\bf x}\Big)^{1/p}\asymp  n^{-1/p},
$$
and the $L_{\infty}(\mathbb B^d)$ norm of $\tilde\varphi_j$ is
  $$\|\tilde\varphi_j\|_{L_{\infty}(\mathbb B^d)}=\sup_{\tilde{\bf x}\in C_{\mathbb B}(\tilde{\bf x}_j,\frac1N)}|\varphi(Nd_{\mathbb B}\left(\tilde{\bf x},\tilde{\bf x}_j)\right)|\asymp1.
$$

  \item [(iii)] For any $\tilde f_{\boldsymbol\az}:=\sum_{j=1}^{4n}
\az_j\tilde\varphi_j$ with ${\boldsymbol\az}=(\az_1,\cdots,\az_{4n})\in\mathbb{R}^{4n}$, we have
  $$\|\tilde f_{\boldsymbol\az}\|_{L_p(w_{\boldsymbol\kappa,\mu}^{\mathbb B})}\asymp
 n^{-1/p}\,\|{\boldsymbol\az}\|_{\ell_p^{4n}},$$
and\begin{itemize}
  \item for any $r>0$,
$$\|\tilde f_{{\boldsymbol\az}}\|_{W_p^r(w_{\boldsymbol\kappa,\mu}^{\mathbb B})}\lesssim n^{r/d-1/p}\,\|{\boldsymbol\az}\|_{\ell_p^{4n}},
$$whenever $w_{\boldsymbol\kappa,\mu}^{\mathbb B}$ is a Dunkl weight on $\mathbb B^d$;
 \item for any $r>0$,
$$\|\tilde f_{{\boldsymbol\az}}\|_{B_\gamma^\Theta(L_p(w_{\boldsymbol\kappa,\mu}^{\mathbb B}))}\lesssim \Theta(n^{-1/d})^{-1}n^{-1/p}\,\|{\boldsymbol\az}\|_{\ell_p^{4n}},
$$whenever $w_{\boldsymbol\kappa,\mu}^{\mathbb B}$ is a product weight on $\mathbb B^d$.
\end{itemize}
\end{itemize}
 Set
$$\varphi_j({\bf x}):=\varphi(Nd_{\mathbb T}\left({\bf x},{\bf x}_j)\right),\ j=1,\dots,4n$$
and $$F_0:=\Big\{f_{\boldsymbol\az}:=\sum_{j=1}^{4n}
\az_j\varphi_j:\,
{\boldsymbol\az}=(\az_1,\cdots,\az_{4n})\in\mathbb{R}^{4n}\Big\},$$where ${\bf x}_j:=\psi_2(\tilde{\bf x}_j),j=1,\dots,4n$. Then
$$\varphi_j({\bf x})=\varphi(Nd_{\mathbb T}\left(\psi_2(\tilde{\bf x}),\psi_2(\tilde{\bf x}_j)\right)=\varphi(Nd_{\mathbb B}\left(\tilde{\bf x},\tilde{\bf x}_j)\right)=\tilde\varphi_j(\tilde{\bf x}),\ j=1,\dots,4n.$$
Therefore, from \eqref{4.2}, \eqref{4.6} and the above properties of $\{\tilde\varphi_j\}_{j=1}^{4n}$, we derive that
 for $f_{\boldsymbol\az}\in F_0$ with $\boldsymbol\az=(\az_1,\dots,\az_{4n})\in\mathbb R^{4n}$,
\begin{itemize}
  \item[(i)] if $w_{\boldsymbol\kappa,\mu}^{\mathbb T}$ is a Dunkl weight on ${\mathbb T}^d$, then
$$\| f_{{\boldsymbol\az}}\|_{W_p^r(w_{\boldsymbol\kappa,\mu}^{{\mathbb T}})}\lesssim n^{r/d-1/p}\,\|{\boldsymbol\az}\|_{\ell_p^{4n}};
$$
  \item[(ii)] if $w_{\boldsymbol\kappa,\mu}^{\mathbb T}$ is a product weight on ${\mathbb T}^d$,  then
$$\| f_{{\boldsymbol\az}}\|_{B_\gamma^\Theta(L_p(w_{\boldsymbol\kappa,\mu}^{\mathbb T}))}\lesssim \Theta(n^{-1/d})^{-1}n^{-1/p}\,\|{\boldsymbol\az}\|_{\ell_p^{4n}}.
$$\end{itemize}

\section{Concluding Remarks}\label{sect5}

In this paper we explore the numerical integration $${\rm
Int}_{\Omega^d,w}(f)=\int_{\Omega^d}f({\bf x})w({\bf x}){\rm
d}\mathsf m({\bf x}) $$ for a weighted function class using the
deterministic and randomized algorithoms, where $\Omega^d$ denotes
the  sphere $\mathbb S^d$, the  ball $\mathbb B^d$, or the
standard simplex $\mathbb T^d$. The corresponding orders of
optimal quadrature errors for weighted Sobolev classes with Dunkl
weights were obtained. However, for the weighted Besov class of
generalized smoothness with a doubling weight $w$, we derived that
for $r>s_w/p$ and $1\le p\le\infty$,
\[
e_n^{\rm det}\left({\rm Int}_{\Oz^d,w}; BB_{\gamma}^\Theta(L_{p,w}(\Oz^d))\right)\lesssim \Theta\big(n^{-\frac1{d}}\big),
\]which is optimal whenever $w$ is an $A_\infty$ weight on $\Omega^d$, and
\[
e_n^{\rm ran}\left({\rm Int}_{\Oz^d,w}; BB_{\gamma}^\Theta(L_{p,w}(\Oz^d))\right)\lesssim \Theta\big(n^{-\frac1{d}}\big)n^{-\frac12+(\frac1p-\frac12)_+}.
\]which is optimal whenever $w$ is the product weight on $\Omega^d$.
We believe that the aforementioned orders are optimal  for all doubling weights, though verifying this may require  different techniques.
\vskip3mm
\noindent{\bf Acknowledgment}  Jiansong Li and Heping Wang  were
supported by the National Natural Science Foundation of China
(Project no. 12371098).

\end{document}